\newtheorem{thm}{Theorem}[section]
\newtheorem{lem}[thm]{Lemma}
\newtheorem{cor}[thm]{Corollary}
\newtheorem{prop}[thm]{Proposition}
\newtheorem{conj}[thm]{Conjecture}
\theoremstyle{definition}
\newtheorem{definition}[thm]{Definition}
\newtheorem{example}[thm]{Example}
\theoremstyle{remark}
\newtheorem{remark}[thm]{Remark}
\numberwithin{equation}{section}
\newcommand{\bC}{{\mathbb C}}
\newcommand{\bQ}{{\mathbb Q}}
\newcommand{\rounddown}[1]{\lfloor{#1}\rfloor}
\newcommand\OO{{\mathcal{O}}}
\newcommand\ZZ{{\mathbb{Z}}}
\newcommand\vol{\text{\rm vol}}
\newcommand\td{{\rm{td}}}
\newcommand\tp{{\rm{tp}}}
\newcommand\sigmabar{{\overline{\sigma}}}
\newcommand\ch{{\rm{ch}}}
\newcommand\RR{{\rm{RR}}}
\newcommand\RW{{\rm{RW}}}
\newcommand\Sym{{\rm{Sym}}}
\newcommand\cB{\mathcal{B}}
\newcommand\hcB{\hat{\mathcal{B}}}
\newcommand\bw{{\bf w}}
\title{Positivity of Riemann--Roch polynomials and Todd classes of hyperk\"{a}hler manifolds}
\date{\today}
\author{Chen Jiang}
\address{Shanghai Center for Mathematical Sciences, Fudan University, Jiangwan Campus, 2005 Songhu Road, Shanghai, 200438, China}
\email{chenjiang@fudan.edu.cn}
\begin{document}
\begin{abstract} 
For a hyperk\"{a}hler manifold $X$ of dimension $2n$, Huybrechts showed that there are constants $a_0, a_2, \dots, a_{2n}$ such that
$$\chi(L) =\sum_{i=0}^n\frac{a_{2i}}{(2i)!}q_X(c_1(L))^{i}$$
for any line bundle $L$ on $X$, where $q_X$ is the Beauville--Bogomolov--Fujiki quadratic form of $X$. Here the polynomial $\sum_{i=0}^n\frac{a_{2i}}{(2i)!}q^{i}$ is called the Riemann--Roch polynomial of $X$.

In this paper, we show that all coefficients of the Riemann--Roch polynomial of $X$ are positive.
This confirms a conjecture proposed by Cao and the author, which implies Kawamata's effective non-vanishing conjecture for projective hyperk\"{a}hler manifolds. It also confirms a question of Riess on strict monotonicity of Riemann--Roch polynomials.

In order to estimate the coefficients of the Riemann--Roch polynomial, we produce a Lefschetz-type decomposition of $\td^{1/2}(X)$, the root of the Todd genus of $X$, via the Rozansky--Witten theory following the ideas of Hitchin, Sawon, and Nieper-Wi{\ss}kirchen.

\end{abstract}
\keywords{Hyperk\"{a}hler manifolds; Riemann--Roch polynomial; Todd classes; Rozansky--Witten theory}
\subjclass[2010]{Primary 53C26; Secondary 14C40, 14C30, 14F45}
\maketitle

\pagestyle{myheadings} \markboth{\hfill C. Jiang
\hfill}{\hfill Positivity of Riemann--Roch polynomials and Todd classes of hyperk\"{a}hler manifolds \hfill}
\tableofcontents

\section{Introduction}
Throughout this paper, we work over the complex number field $\bC$.

A compact K\"{a}hler manifold $X$ is called
a {\it hyperk\"{a}hler manifold} or an {\it irreducible holomorphic symplectic manifold} if $X$ is simply connected and $H^0(X, \Omega^2_X)$ is spanned by an everywhere non-degenerate $2$-form. Hyperk\"{a}hler manifolds are higher-dimensional analogues of K3 surfaces and appear to be a very important class of manifolds with $c_1=0$. 
Their rich geometry attracts much attention from different areas of mathematics.
The only known examples are (up to deformations): Hilbert schemes of points on K3 surfaces, generalized Kummer varieties (due to Beauville's construction \cite{beauville}), and $2$ examples in dimensions $6$ and $10$ constructed by O'Grady \cite{ogrady1, ogrady2}. 

The main goal of this paper is to study general properties of the Riemann--Roch polynomial and Todd classes of a hyperk\"{a}hler manifold.
\subsection{Positivity of Riemann--Roch polynomials}
For a hyperk\"{a}hler manifold $X$ of dimension $2n$, Huybrechts \cite{huybrechts} showed that there are constants $a_0, a_2, \dots, a_{2n}$ such that
$$\chi(L) =\sum_{i=0}^n\frac{a_{2i}}{(2i)!}q_X(c_1(L))^{i}$$
for any line bundle $L$ on $X$, where $q_X$ is the Beauville--Bogomolov--Fujiki quadratic form of $X$ (see Section~\ref{sec bbf form}). Here the polynomial $\RR_X(q)=\sum_{i=0}^n\frac{a_{2i}}{(2i)!}q^{i}$ is called the {\it Riemann--Roch polynomial} of $X$. Note that $\RR_X$ is a deformation invariant of $X$. To study the behavior of line bundles on hyperk\"{a}hler manifolds, it is crucial to have a good understanding of Riemann--Roch polynomials. 
In \cite{CJmathz}, Cao and the author conjectured that the coefficients of the Riemann--Roch polynomial are all non-negative for any projective hyperk\"{a}hler manifold, and proved it up to dimension $6$.
The main theorem of this paper is the following.

\begin{thm}\label{main thm1 RR>0}
Let $X$ be a hyperk\"{a}hler manifold. Then all coefficients of the Riemann--Roch polynomial $\RR_X(q)$ are positive.
\end{thm}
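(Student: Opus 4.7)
The plan is to identify each coefficient $a_{2i}$ with a ``Fujiki constant'' attached to a component of $\td(X)$, and then to analyze these constants via a Lefschetz-type decomposition of $\td^{1/2}(X)$ coming from Rozansky--Witten theory. Combining Hirzebruch--Riemann--Roch with Verbitsky's structure theorem on $H^{*}(X)$, the generalized Fujiki relation asserts that for any $\mathrm{SO}$-invariant class $\gamma \in H^{4k}(X,\bR)$ one has $\int_X \gamma \cdot \alpha^{2n-2k} = f(\gamma)\, q_X(\alpha)^{n-k}$ for a scalar $f(\gamma)$ depending only on $\gamma$. Applied to $\gamma = \td_{2k}(X)$ and expanding $\chi(L) = \int_X e^{c_1(L)} \td(X)$ as a polynomial in $q_X(c_1(L))$, matching with the definition of $\RR_X$ yields $a_{2i} = f(\td_{2(n-i)}(X))$, so the theorem reduces to positivity of the Fujiki constant of every homogeneous component of the Todd class.

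Following Hitchin, Sawon, and Nieper--Wi{\ss}kirchen, the next key step is to work with $\td^{1/2}(X)$ rather than $\td(X)$ itself. Rozansky--Witten theory expresses every characteristic class of a hyperk\"{a}hler manifold as a sum, indexed by trivalent graphs, of universal graph invariants, and this framework is particularly well adapted to the square root of the Todd genus. I would produce a \emph{Lefschetz-type decomposition} of $\td^{1/2}(X)$ with respect to the Verbitsky $\mathfrak{sl}_2$-triple attached to the BBF form: write $\td^{1/2}(X)$ as a sum of terms of the form $q_X^{s} \cdot p$ with $p$ a primitive class, whose Fujiki constant can be extracted from the RW-graph expansion. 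Squaring induces a corresponding decomposition of $\td(X)$ and expresses each $f(\td_{2k}(X))$ as a sum of products of Fujiki constants of primitive classes, weighted by non-negative binomial coefficients from Lefschetz combinatorics.

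Positivity would then follow term by term: the Fujiki constants of primitive classes are controlled by Hodge--Riemann-type bilinear relations, so primitive classes pair against themselves with a definite sign, and at least one term in the sum is manifestly strictly positive---for instance, the contribution of the constant term $1$ of $\td^{1/2}(X)$ yields the Fujiki constant of $X$, which is positive. The main obstacle I anticipate is the decomposition step itself: extracting the RW-graph expansion of $\td^{1/2}(X)$ and organizing the primitive pieces so that sign-control is transparent is non-trivial, because the Todd class itself does not admit such a clean decomposition (cancellations occur), and demonstrating that the square root is sufficiently better behaved requires a careful combinatorial analysis of graph contributions to each primitive part. This is where I expect the bulk of the technical work to lie.
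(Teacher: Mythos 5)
Your proposal follows essentially the same route as the paper: identify $a_{2i}$ with the Fujiki constant of $\td_{2n-2i}(X)$, build a Lefschetz-type primitive decomposition of $\td^{1/2}(X)$ via Rozansky--Witten theory (the paper's key input being $\Lambda_{\sigma/4}(\td^{1/2}_{2k})=\frac{1}{\lambda_\sigma}\td^{1/2}_{2k-2}\wedge\sigmabar$, derived from the wheeling theorem), square it to decompose $\td_{2m}$, and conclude positivity from the Hodge--Riemann relations applied to the primitive pieces, with the $\tp_0=1$ term giving the strictly positive contribution. You also correctly locate the real technical work in establishing the decomposition with sign-controlled (in fact positive) coefficients, which is exactly what the paper's Theorem~\ref{td=sum tp} and the combinatorial identities in the appendix accomplish.
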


In fact, in Corollary~\ref{cor P>0}, we will have a more precise estimate on the lower bounds of the coefficients of $\RR_X$.
We remark that Nieper-Wi{\ss}kirchen \cite{nieper-jag} gave a closed formula for the coefficients $a_{2k}$ in terms of Chern numbers of $X$ by the Rozansky--Witten theory, but the expression is quite complicated and not sufficient to determine the positivity of coefficients.

\begin{example}\label{ex RR}The Riemann--Roch polynomials of known hyperk\"{a}hler manifolds are as the following:
\begin{enumerate}
\item If $X$ is a hyperk\"{a}hler manifold of dimension $2n$ deformation equivalent to the Hilbert scheme of $n$ points on a K3 surface or O'Grady's $10$-dimensional example, then $\RR_X(q)=\binom{q/2+n+1}{n}$ by \cite[Lemma 5.1]{egl} and \cite[Theorem 2]{ortiz}; 
\item If $X$ is a hyperk\"{a}hler manifold of dimension $2n$ deformation equivalent to a generalized Kummer variety or O'Grady's $6$-dimensional example, then $\RR_X(q)=(n+1)\binom{q/2+n}{n}$ by \cite[Lemma 5.2]{BN} and \cite[Theorem 2]{ortiz}. 
\end{enumerate}
\end{example}
From the known examples, we can observe that $\RR_X$ might satisfy more properties than positivity, so it is natural to raise up the following conjecture (the first one is a question asked by Ortiz).
\begin{conj}
Let $X$ be a hyperk\"{a}hler manifold. 
\begin{enumerate}
\item The sequence of coefficients of $\RR_X(q)$ is log concave.
\item All roots of $\RR_X(q)$ are negative real numbers.
\item More wildly, all roots of $\RR_X(q)$ are negative even integers forming an arithmetic sequence.
\end{enumerate}
\end{conj}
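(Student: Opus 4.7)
The three parts of the conjecture form a descending hierarchy: part~(3) implies part~(2), and part~(2), combined with the positivity established in Theorem~\ref{main thm1 RR>0}, implies part~(1) via Newton's inequalities for real-rooted polynomials with positive coefficients. A realistic plan therefore targets (3) first, treating (1) as a fallback to be pursued by different methods if (2) and (3) resist.

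The natural approach to (3) is to upgrade the Lefschetz-type decomposition of $\td^{1/2}(X)$---the main technical tool of the paper---into a structural statement about $\RR_X(q)$ itself. Writing
$$\RR_X(q_X(c_1(L)))=\chi(L)=\int_X e^{c_1(L)}\cdot\td^{1/2}(X)\cdot\td^{1/2}(X)$$
and using Fujiki's formula to express the pairing of $e^{c_1(L)}$ against any cohomology class as a polynomial in $q_X(c_1(L))$, each Lefschetz component of $\td^{1/2}(X)$ contributes a factor whose degree and leading behavior are controlled by its bidegree. If the decomposition can be refined so that its primitive pieces correspond to well-defined weights, one should obtain a product expansion $\RR_X(q)=c\prod_{i=1}^{n}(q/2+\alpha_i)$, and the problem reduces to identifying the $\alpha_i$ as positive integers in arithmetic progression. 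The known examples (Example~\ref{ex RR}) strongly suggest interpreting the $\alpha_i$ as weights of a natural Lie-algebra action on $H^{*}(X,\bQ)$, a plausible candidate being the Looijenga--Lunts--Verbitsky algebra, whose structure is already intertwined with $q_X$ and with primitive decompositions.

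Part~(1) could alternatively be attacked directly via Hodge-theoretic inequalities: the $a_{2i}$ are essentially intersection numbers of $\td^{1/2}(X)$ against powers of a class with $q_X=1$, so Khovanskii--Teissier-type inequalities from the Hodge--Riemann bilinear relations yield log-concavity for such sequences. The technical obstruction is that the $a_{2i}$ differ from pure intersection numbers by the combinatorial factors $(2i)!$ and by Fujiki constants; preserving log-concavity through these rescalings likely requires input from the refined lower bounds on the $a_{2i}$ given in Corollary~\ref{cor P>0}.

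The principal obstacle is conceptual rather than technical: positivity and Hodge-theoretic inequalities do not naturally produce information about the \emph{roots} of a polynomial, yet parts~(2) and~(3) concern precisely the roots. Bridging this gap appears to require a genuinely new geometric interpretation of the roots---for instance, identifying each root $-2\alpha_i$ as a value of $q_X(c_1(L))$ at which $\chi(L)$ is forced to vanish for some explicit class $L$ attached to an exceptional divisor or a wall in the movable cone. Absent such a mechanism, parts~(2) and~(3) appear beyond the reach of the Rozansky--Witten methods of the paper, though part~(1) may already be within grasp via refinements of the Lefschetz decomposition combined with Hodge--Riemann.
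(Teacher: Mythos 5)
The statement you were asked to prove is stated in the paper as a \emph{conjecture} and is not proved there: the paper establishes only the positivity of the coefficients (Theorem~\ref{main thm1 RR>0}), and part (1) is recorded as a question of Ortiz. Your proposal, quite reasonably, reads as a research programme rather than a proof, and you are candid that parts (2) and (3) lie beyond the paper's methods. Two observations in your write-up are correct and worth keeping: the hierarchy $(3)\Rightarrow(2)$, and the reduction of $(2)\Rightarrow(1)$ via Newton's inequalities, which do apply here because Theorem~\ref{main thm1 RR>0} supplies positive coefficients (real-rootedness plus nonnegativity yields ultra-log-concavity, hence log-concavity).

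The genuine gap is that no part of the conjecture is actually established, and the two mechanisms you propose do not work as described. (i) The suggested product expansion $\RR_X(q)=c\prod_i(q/2+\alpha_i)$ does not follow from the Lefschetz-type decomposition of $\td^{1/2}(X)$: that decomposition is additive, and pairing $\exp(c_1(L))$ against $\td(X)=\td^{1/2}(X)\cdot\td^{1/2}(X)$ via Theorem~\ref{fujiki result} produces a \emph{sum} of terms governed by $\int(\tp_{2i})^2(\sigma\sigmabar)^{n-2i}$ --- this is precisely the computation in Theorem~\ref{thm td>0} --- with no mechanism forcing the resulting polynomial in $q$ to factor over $\bR$, let alone with roots in arithmetic progression. (ii) The Khovanskii--Teissier route to (1) also stalls: those inequalities govern intersection numbers $\int\alpha^{i}\beta^{2n-i}$ of nef $(1,1)$-classes, whereas $a_{2i}={\bf C}(\td_{2n-2i}(X))$ involves the Todd classes, which are not powers of a $(1,1)$-class; the paper's own use of the Hodge--Riemann relations extracts only the signs of $\int(\tp_{2i})^2(\sigma\sigmabar)^{n-2i}$, yielding the lower bounds of Corollary~\ref{cor P>0} but no concavity among the $a_{2i}$. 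So your text should be presented as a discussion of an open conjecture; as such it contains no false step, but it closes none of the three parts and cannot be compared to a proof in the paper, since the paper offers none.
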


As applications of Theorem~\ref{main thm1 RR>0}, we give an affirmative answer to the conjecture of Cao and the author \cite{CJmathz} which leads to a solution of Kawamata's effective non-vanishing conjecture for projective hyperk\"{a}hler manifolds (Corollary~\ref{cj conj})
and also an affirmative answer to a question of Riess \cite{Riess} on the strict monotonicity of Riemann--Roch polynomials (Corollary~\ref{riess conj}).

\subsection{A Lefschetz-type decomposition of $\td^{1/2}(X)$ via the Rozansky--Witten theory}
To prove Theorem~\ref{main thm1 RR>0}, we need to have a good understanding of the Todd genus of a hyperk\"{a}hler manifold. In fact, as observed by Hitchin and Sawon \cite{hitchinsawon} and Nieper-Wi{\ss}kirchen \cite{nieper-jag}, the root of the Todd genus $\td^{1/2}(X)$ is a more interesting object, especially from the point of view of the Rozansky--Witten theory. Following their ideas, we use the Rozansky--Witten theory to produce a Lefschetz-type decomposition of $\td^{1/2}(X)$.

\begin{thm}[{=Proposition~\ref{prop primitive}+Theorem~\ref{td=sum tp}}]\label{main thm2 lef decomp}
Let $X$ be a hyperk\"{a}hler manifold of dimension $2n$ and fix a non-zero $\sigma\in H^0(X, \Omega^2_X)$. Consider $\lambda_\sigma=\frac{24n \int \exp(\sigma+\sigmabar)}{ \int c_{2}(X) \exp(\sigma+\sigmabar)}$.
For $0\leq k\leq n/2$, denote
$$
\tp_{2k}:=\sum_{i=0}^k\frac{(n-2k+1)!\td^{1/2}_{2i}\wedge(\sigma\sigmabar)^{k-i}}{(-\lambda_\sigma)^{k-i}(k-i)!(n-k-i+1)!}\in H^{4k}(X).$$
Then $\tp_{2k}$ is $(\sigma+\sigmabar)$-primitive for any $0\leq k\leq n/2$.
Furthermore, for any $0\leq k\leq n$, 
$$
\td^{1/2}_{2k}=\sum_{i=0}^{\min\{k, n-k\}} \frac{(n-k-i)!}{\lambda_\sigma^{k-i} (k-i)!(n-2i)!}\tp_{2i}\wedge(\sigma\sigmabar)^{k-i}.
$$
\end{thm}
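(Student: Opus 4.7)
The approach rests on a \emph{generalized Fujiki relation} for the components of $\td^{1/2}(X)$, which should be established earlier in the paper via Rozansky--Witten theory in the spirit of Hitchin--Sawon and Nieper-Wi{\ss}kirchen: for every $\alpha\in H^2(X,\bC)$ and $0\le k\le n$,
\[
\int_X \td^{1/2}_{2k}(X)\wedge \alpha^{2n-2k}=\binom{2n-2k}{n-k}s_{2k}(X)\cdot q_X(\alpha)^{n-k}
\]
for deformation-invariant constants $s_{2k}(X)$. Equivalently, $\int_X\td^{1/2}(X)\exp(\alpha)$ is a polynomial of degree $n$ in $q_X(\alpha)$. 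Polarizing turns intersections of the $\td^{1/2}_{2i}$ with products of $H^2$-classes into polynomials in $q_X$-pairings, so the cohomological identities asserted in the theorem reduce to combinatorial ones.

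\textbf{Primitivity of $\tp_{2k}$.} I would begin with the base case $k=1$ to motivate $\lambda_\sigma$. Since $c_1(X)=0$ forces $\td^{1/2}_2=c_2(X)/24$, the definition unfolds to $\tp_2=c_2(X)/24-(\sigma\sigmabar)/(n\lambda_\sigma)$, and the condition $(\sigma+\sigmabar)^{2n-3}\wedge\tp_2=0$ tested by integration against $(\sigma+\sigmabar)$ becomes, via the purely Hodge-theoretic identity $\int(\sigma+\sigmabar)^{2n-2}\sigma\sigmabar=\frac{n}{2(2n-1)}\int(\sigma+\sigmabar)^{2n}$ and the $k=1$ generalized Fujiki relation, exactly the defining formula for $\lambda_\sigma$ (after rewriting $(\sigma+\sigmabar)^{2n}$ as $(2n)!\int\exp(\sigma+\sigmabar)$ and similarly for $c_2(X)$). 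In particular $\lambda_\sigma$ is the unique value making $\tp_2$ primitive. For general $0\le k\le n/2$, to prove $(\sigma+\sigmabar)^{2n-4k+1}\wedge\tp_{2k}=0$ in $H^{4n-4k+2}(X)$, I would pair under Poincar\'e duality with test classes of the form $\alpha^{4k-2}$ for variable $\alpha\in H^2(X,\bC)$; applying the generalized Fujiki relation termwise in the definition of $\tp_{2k}$ converts each pairing into a polynomial in $q_X(\alpha)$, $q_X(\alpha,\sigma)$, $q_X(\alpha,\sigmabar)$, and $q_X(\sigma,\sigmabar)$ (using $q_X(\sigma)=q_X(\sigmabar)=0$). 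The coefficients $(n-2k+1)!/((k-i)!(n-k-i+1)!(-\lambda_\sigma)^{k-i})$ are engineered precisely so that the resulting sum collapses via a Vandermonde-type identity anchored by the $k=1$ calculation above.

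\textbf{Inversion.} Once primitivity is proved, the decomposition of $\td^{1/2}_{2k}$ in terms of $\{\tp_{2i}\}$ follows from linear algebra. For $0\le k\le n/2$ it is the triangular inversion of the defining system, and the closed-form coefficients $(n-k-i)!/(\lambda_\sigma^{k-i}(k-i)!(n-2i)!)$ are verified by forming the product of the two triangular matrices and identifying the diagonal sum via the Chu--Vandermonde identity. For $n/2<k\le n$, the sum truncates at $i\le n-k$ since the factor $(n-k-i)!$ would involve a negative factorial otherwise; existence and uniqueness in this range come from the Hard Lefschetz isomorphism $(\sigma+\sigmabar)^{4k-2n}\colon H^{4n-4k}(X)\xrightarrow{\sim}H^{4k}(X)$, which lets us pull the already-established decomposition on the ``low-degree'' side forward and then use self-consistency with the Lefschetz structure to pin down the coefficients.

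\textbf{Main obstacle.} The primitivity step is the crux. The combinatorial identity produced by feeding the generalized Fujiki relation into $(\sigma+\sigmabar)^{2n-4k+1}\wedge\tp_{2k}$ demands careful bookkeeping of the many $q_X$-pairings appearing in the binomial expansion, exploiting $q_X(\sigma)=q_X(\sigmabar)=0$ alongside $q_X(\sigma,\sigmabar)\ne 0$. A second subtlety is that testing $(\sigma+\sigmabar)^{2n-4k+1}\tp_{2k}=0$ against only $H^2$-products is formally enough because $\tp_{2k}$ inherits a Fujiki-type relation from its constituents, but making this rigorous requires invoking either Verbitsky's theorem on the cohomology subring generated by $H^2(X)$ or a direct Rozansky--Witten argument. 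The cleanest resolution likely proceeds via a generating-function identity for $\int\td^{1/2}(X)\exp(\alpha)$ with $\alpha=t(\sigma+\sigmabar)+\beta$ for auxiliary $\beta\in H^2$ and parameter $t$, producing the Lefschetz-decomposition coefficients as Taylor coefficients and reducing the identity formally to the classical Lefschetz decomposition on a K\"ahler manifold.
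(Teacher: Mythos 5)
There is a genuine gap at the crux of your argument, namely the primitivity step. Testing $(\sigma+\sigmabar)^{2n-4k+1}\wedge\tp_{2k}=0$ by integrating against classes $\alpha^{4k-2}$ (or any products of $H^2$-classes, including your generating-function variant with $\alpha=t(\sigma+\sigmabar)+\beta$) only shows that this class is orthogonal to the degree-$(4k-2)$ part of the Verbitsky component $\text{\rm SH}^2(X)\subset H^*(X)$. Since $\td^{1/2}_{2k}$ does not lie in $\text{\rm SH}^2(X)$ in general, neither does $\tp_{2k}$, and the component of $(\sigma+\sigmabar)^{2n-4k+1}\wedge\tp_{2k}$ lying in $\text{\rm SH}^2(X)^{\perp}$ is invisible to every such test; a Fujiki-type relation for $\int\td^{1/2}_{2k}\wedge\alpha^{2n-2k}$ (which does exist, by Theorem~\ref{fujiki result}) carries strictly less information than the cohomological identity you need. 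The introduction of the paper notes that exactly this projection-to-$\text{\rm SH}^2(X)$ strategy was used in \cite{CJmathz} for $c_2(X)$ in dimension $6$ and that it fails in higher dimensions because the orthogonal complement in $\text{\rm SH}^2(X)^\perp$ cannot be controlled. Your ``second subtlety'' paragraph flags the issue but defers it to ``a direct Rozansky--Witten argument,'' and that deferral is precisely the missing content: the proof actually rests on Corollary~\ref{lambda td}, the identity of cohomology classes $\Lambda_{\sigma/4}(\td^{1/2}_{2k})=\frac{1}{\lambda_\sigma}\td^{1/2}_{2k-2}\wedge\sigmabar$, obtained from the wheeling theorem $\partial\Omega=\frac{\Theta}{48}\Omega$ together with the compatibility of the graph-homology operator $\partial$ with $\Lambda_{\sigma/4}$ on Rozansky--Witten classes (Theorem~\ref{RW diff}). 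Given that identity, primitivity is a short telescoping computation with the $\mathfrak{sl}_2$-commutator relations of Lemmas~\ref{sigma sigmabar commutes} and~\ref{AL=L}; no Fujiki relation or intersection-number bookkeeping enters, and invoking Verbitsky's structure theorem for $\text{\rm SH}^2(X)$ would not help since the obstruction lives outside that subring.

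Your inversion step is essentially sound in outline and close to the paper's: for $k\le n/2$ it is a triangular inversion verified by a Vandermonde-type identity (Lemma~\ref{comb identity1}), and for $k>n/2$ the paper applies the isomorphism $\Lambda_{\sigma/4}^{2k-n}\colon H^{2k}(X,\Omega_X^{2k})\to H^{2k}(X,\Omega_X^{2n-2k})$ (rather than a power of $L_{\sigma+\sigmabar}$, but to the same effect) to reduce to the low-degree case. Note, however, that this reduction again uses Corollary~\ref{lambda td} to compute $\Lambda_{\sigma/4}^{2k-n}(\td^{1/2}_{2k})$ as a cohomology class, so the same missing ingredient is required here as well.
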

Applying the Hodge--Riemann bilinear relation to the decomposition in Theorem~\ref{main thm2 lef decomp} 
will give a good estimate to $\int\td(X)\exp(\sigma+\sigmabar)$, which proves Theorem~\ref{main thm1 RR>0}.
Also this decomposition can recover known results due to Hitchin and Sawon \cite{hitchinsawon} and Nieper-Wi{\ss}kirchen \cite{nieper-jag} (see Corollary~\ref{cor cover nieper}).
Meanwhile, this result might be also interesting for its own sake to help us to study the cohomological structure of 
hyperk\"{a}hler manifolds.

We remark that the idea of using the Hodge--Riemann bilinear relation to prove Theorem~\ref{main thm1 RR>0} originates from \cite{CJmathz}, where we used a Lefschetz-type decomposition from \cite{guan} for $c_2(X)$ 
to show that Theorem~\ref{main thm1 RR>0} holds in dimension $6$. The decomposition there is given by the projection to the {\it Verbitsky component}, that is, the subalgebra $\text{\rm SH}^2(X)\subset H^*(X)$ generated by $H^2(X)$. However, this method only works for $c_2(X)$, and is not applicable to higher dimensions, as this decomposition is too coarse and we can not control the orthogonal complements in $\text{\rm SH}^2(X)^\perp.$

In order to prove the decomposition in Theorem~\ref{main thm2 lef decomp}, the key ingredient is to show the following result.

\begin{cor}[{=Corollary~\ref{lambda td}}]\label{main thm3 lambda td}Let $X$ be a hyperk\"{a}hler manifold and fix a non-zero $\sigma\in H^0(X, \Omega^2_X)$. Consider $\lambda_\sigma=\frac{24n \int \exp(\sigma+\sigmabar)}{ \int c_{2}(X) \exp(\sigma+\sigmabar)}$.
Then for any integer $k\geq 1$,
$$\Lambda_{\sigma/4}(\td^{1/2}_{2k})=\frac{1}{\lambda_\sigma}\td^{1/2}_{2k-2}\wedge\sigmabar.$$
\end{cor}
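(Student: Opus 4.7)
The plan is to set up a Rozansky--Witten theoretic description of $\td^{1/2}(X)$ in the style of Hitchin--Sawon \cite{hitchinsawon} and Nieper-Wi{\ss}kirchen \cite{nieper-jag}, and then transport the dual Lefschetz operator $\Lambda_{\sigma/4}$ across this description as a graph-algebra operation. Concretely, Nieper-Wi{\ss}kirchen's formalism provides a weight system $\Phi_X:\cB\to H^*(X,\bC)$ from the algebra $\cB$ of closed Jacobi diagrams, built out of the Kapranov curvature with legs contracted against $\sigma$, and writes $\td^{1/2}_{2k} = \Phi_X(W_{2k})$ for explicit universal wheel expressions $W_{2k}\in\cB$ whose coefficients are given by modified Bernoulli numbers.

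The central step is to establish a ``dual Lefschetz at the graph level'' identity: there is a universal scalar $\mu\in\bC$ and a combinatorial operator $\delta$ on $\cB$ such that for every wheel-type class $\gamma$,
$$\Lambda_{\sigma/4}\bigl(\Phi_X(\gamma)\bigr) = \mu\,\Phi_X(\delta\gamma)\wedge\sigmabar,\qquad \delta W_{2k} = W_{2k-2}.$$
Morally this is forced because $\sigma$ is exactly the piece of data used to close off legs in $\Phi_X$, so the $sl(2)$-dual of wedging with $\sigma/4$ must act by unplugging one such contraction and releasing a $\sigmabar$-factor via Hodge duality. The matching with the recursion $W_{2k}\mapsto W_{2k-2}$ should then fall out from the structure of Nieper-Wi{\ss}kirchen's expressions, where the wheel weights are governed by a single generating series.

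Granting the graph-level identity, the scalar $\mu$ is uniquely pinned down by the $k=1$ case. There $\td^{1/2}_2 = -c_2(X)/24$, and by Hodge type together with Verbitsky's $SL(2)$-invariance, $\Lambda_{\sigma/4}(c_2(X))\in H^2(X)$ must be a scalar multiple of $\sigmabar$. Pairing both sides with $(\sigma+\sigmabar)^{2n-1}$ and applying the adjointness of $L_{\sigma/4}$ and $\Lambda_{\sigma/4}$ turns the identity into a comparison between $\int c_2(X)\exp(\sigma+\sigmabar)$ and $\int\exp(\sigma+\sigmabar)$, from which the definition of $\lambda_\sigma$ directly forces $\mu = 1/\lambda_\sigma$.

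The main obstacle is implementing the first step rigorously. While it is intuitively clear that $\Lambda_{\sigma/4}$ should correspond to a contraction operation on Jacobi diagrams, translating this into a precise graph identity requires a careful local computation (e.g.\ in a Darboux chart for $\sigma$) and consistent bookkeeping of signs, combinatorial factors, and the compatibility of the Verbitsky $\mathfrak{so}(4,1)$-action with the graph algebra structure; everything beyond this should reduce to the normalization carried out in the $k=1$ case above.
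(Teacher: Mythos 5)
Your overall strategy---realizing $\td^{1/2}_{2k}$ as the Rozansky--Witten class of a universal wheel expression and transporting $\Lambda_{\sigma/4}$ to a contraction operation on Jacobi diagrams---is the same as the paper's, and the ``careful local computation'' you defer to is indeed the technical heart of that step (Proposition~\ref{RW diff local}, Theorem~\ref{RW diff}). However, your central graph-level identity is misstated in a way that conceals the essential missing ingredient. The correct universal statement is $\RW_\sigma(\partial\gamma)=\Lambda_{\sigma/4}(\RW_\sigma(\gamma))$, with \emph{no} scalar $\mu$ and \emph{no} extra $\sigmabar$-factor: the operator $\partial$ glues two univalent legs into a new internal edge, so it preserves the number of trivalent vertices and cannot send the degree-$4k$ component $\Omega_{2k}$ of the wheeling element to $\Omega_{2k-2}$. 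The recursion you posit, $\delta W_{2k}=W_{2k-2}$, is false in graph homology. What actually happens is governed by the wheeling theorem (Theorem~\ref{wheeling thm}), $\partial\Omega=\frac{\Theta}{48}\Omega$, i.e.\ $\partial\Omega_{2k}=\frac{\Theta}{48}\Omega_{2k-2}$, and the factor $\frac{1}{\lambda_\sigma}\sigmabar$ then enters through the separate evaluation $\RW_\sigma(\Theta)=\frac{48}{\lambda_\sigma}\sigmabar$ together with multiplicativity of $\RW_\sigma$ (Propositions~\ref{prop RW algebra} and~\ref{prop RW classes}). Your hope that the recursion ``should fall out from the structure of Nieper-Wi{\ss}kirchen's expressions'' underestimates this: the generating series of modified Bernoulli numbers is precisely engineered so that the wheeling theorem holds, and that theorem is a nontrivial result from the theory of Vassiliev invariants (Thurston), not a bookkeeping consequence of the weight system.

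Relatedly, once the correspondence is stated correctly there is no free scalar left to normalize: $1/\lambda_\sigma$ is forced by $\RW_\sigma(\Theta)$, not by a $k=1$ bootstrap. Your $k=1$ computation is fine as a consistency check (cf.\ Example~\ref{example k3}), modulo the sign error $\td^{1/2}_2=+\frac{1}{24}c_2(X)$ rather than $-\frac{1}{24}c_2(X)$; but a normalization at $k=1$ propagates to all $k$ only if one already knows a graph-level identity with a single universal constant, which is exactly the part that is missing. So the proposal does not close without importing the wheeling theorem as an explicit input.
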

See Section~\ref{sec sl2} for the definition of $\Lambda_{\sigma/4}$. This result is proved using the Rozansky--Witten theory and the wheeling theorem, following the ideas of Hitchin and Sawon \cite{hitchinsawon} and Nieper-Wi{\ss}kirchen \cite{nieper-jag}. The key is to show a formula comparing the $\Lambda_{\sigma/4}$-action on Rozansky--Witten classes and the differential operator action on Jacobi diagrams (Theorem~\ref{RW diff}). Such a formula was originally observed by Nieper-Wi{\ss}kirchen in his thesis \cite{nieper-phd}. 

Finally, it is worth-mentioning that during the proof, we get the following by-product. It can be viewed as a counterpart of the result $\int\td^{1/2}(X)>0$ in \cite{hitchinsawon}, and it might have further applications to the topological structure of hyperk\"{a}hler manifolds.
\begin{cor}[{=Corollary~\ref{upper td1/2}}]
Let $X$ be a hyperk\"{a}hler manifold of dimension $2n>2$. 
Then
$\int\td^{1/2}(X)< 1.$
\end{cor}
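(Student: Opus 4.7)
I begin by specializing Theorem~\ref{main thm2 lef decomp} to $k=n$: since $\min\{n,0\}=0$, only the $i=0$ summand survives, giving
$$\td^{1/2}_{2n}=\frac{(\sigma\sigmabar)^n}{\lambda_\sigma^n\,(n!)^2}.$$
Integration, combined with the identity $\int_X\exp(\sigma+\sigmabar)=\int_X(\sigma\sigmabar)^n/(n!)^2$ (only the $(n,n)$-bidegree piece of $(\sigma+\sigmabar)^{2n}$ contributes to the top integral), yields the clean formula
$$\int_X \td^{1/2}(X)=\frac{\int_X \exp(\sigma+\sigmabar)}{\lambda_\sigma^n},$$
so the claim $<1$ becomes $\int_X \exp(\sigma+\sigmabar)<\lambda_\sigma^n$.

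To extract this strict inequality, the plan is to combine the above with Hirzebruch--Riemann--Roch $\chi(\OO_X)=n+1=\int_X(\td^{1/2})^2$, expanding the top-degree part via Theorem~\ref{main thm2 lef decomp}. Writing $\td^{1/2}_{2a}=\sum_i D_{a,i}\,\tp_{2i}\wedge(\sigma\sigmabar)^{a-i}$ with $D_{a,i}=(n-a-i)!/(\lambda_\sigma^{a-i}(a-i)!(n-2i)!)$, one observes the striking symmetric identity
$$D_{a,i}\cdot D_{n-a,i}=\frac{1}{\lambda_\sigma^{n-2i}\,(n-2i)!^2},$$
independent of $a$. Summing over $a\in[i,n-i]$ and invoking the orthogonality $\int_X \tp_{2i}\tp_{2k}(\sigma\sigmabar)^{n-i-k}=0$ for $i\neq k$ (a consequence of the primitivity of $\tp_{2i}$ under $(\sigma+\sigmabar)$, as can be checked directly by Fujiki-type manipulations), one rewrites the Hirzebruch--Riemann--Roch identity as
$$n+1 \;=\; (n+1)\int_X \td^{1/2}(X)\;+\;\sum_{i=1}^{\lfloor n/2\rfloor}\frac{n-2i+1}{\lambda_\sigma^{n-2i}(n-2i)!^2}\int_X \tp_{2i}^2\wedge(\sigma\sigmabar)^{n-2i}.$$
By the Hodge--Riemann positivity for $(\sigma+\sigmabar)$-primitive classes---the same input powering the proof of Theorem~\ref{main thm1 RR>0}---each summand on the right is non-negative, so $\int_X \td^{1/2}(X)\leq 1$ is immediate.

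For strict inequality when $n\geq 2$, it suffices to show some term in the sum is strictly positive. Taking $i=1$, the primitive class $\tp_2 = c_2(X)/24-\sigma\sigmabar/(n\lambda_\sigma)$ must be non-zero, since $c_2(X)$ is not a scalar multiple of $\sigma\sigmabar$ in $H^{2,2}(X)$ for any hyperk\"ahler manifold of dimension at least $4$. The strict Hodge--Riemann inequality then delivers $\int_X \tp_2^2\wedge(\sigma\sigmabar)^{n-2}>0$, upgrading the bound to $\int_X \td^{1/2}(X)<1$. The main technical obstacle I expect to face is rigorously establishing the orthogonality and positivity statements against the non-standard power $(\sigma\sigmabar)^m$ rather than the usual K\"ahler power $(\sigma+\sigmabar)^{2m}$; I anticipate handling this via the hyperk\"ahler $SU(2)$-symmetry of Verbitsky, using $\sigma\sigmabar=\omega_J^2+\omega_K^2$ to transport classical Hodge--Riemann statements for the K\"ahler form $\tfrac{1}{2}(\sigma+\sigmabar)=\omega_J$ to the required pairings.
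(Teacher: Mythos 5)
Your argument is correct and is essentially the paper's own proof: the paper deduces the corollary from Theorem~\ref{thm td>0} at $m=n$, and your computation is exactly the proof of that theorem specialized to $m=n$ --- the same decomposition from Theorem~\ref{td=sum tp}, the same orthogonality and Hodge--Riemann positivity of the $(\sigma+\sigmabar)$-primitive classes $\tp_{2i}$ (Corollary~\ref{cor pp=0}, where the pairing against $(\sigma\sigmabar)^{n-2i}$ is reduced to the standard one against $(\sigma+\sigmabar)^{2n-4i}$ by a simple type/degree argument, no $SU(2)$-symmetry needed), and the same strictness via $\tp_2\neq 0$. Your identity $D_{a,i}D_{n-a,i}=\lambda_\sigma^{2i-n}/(n-2i)!^2$ is just the observation that the paper's combinatorial Lemma~\ref{comb identity2} becomes trivial when $m=n$.
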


This paper is organized as the following. In Section~\ref{sec pre}, we prepare necessary background knowledge.
In Section~\ref{sec RW}, we briefly recall the Rozansky--Witten theory and prove Theorem~\ref{RW diff} and Corollary~\ref{main thm3 lambda td}. In Section~\ref{sec lef dec}, we give a Lefschetz-type decomposition of $\td^{1/2}$ (Theorem~\ref{main thm2 lef decomp}). In Section~\ref{final sec}, we study the positivity of the Riemann--Roch polynomials, prove Theorem~\ref{main thm1 RR>0}, and give various applications.

\section{Preliminaries}\label{sec pre}

In this section, we collect basic knowledge on hyperk\"{a}hler manifolds. The readers may refer to \cite{huybrechts}.

\subsection{Complex structures}\label{sec IJK}

Let $X$ be a hyperk\"{a}hler manifold with a non-zero $\sigma\in H^0(X, \Omega^2_X)$ and a K\"{a}hler form $\omega$. Then there are $3$ complex structures $I,J,K$ on $X$ satisfying $K=IJ=-JI$ and a hyperk\"{a}hler metric $g$ compatible with all of them with corresponding K\"{a}hler forms $\omega=\omega_I, \omega_J, \omega_K.$
Up to a scalar, we may assume that $\sigma=\omega_J+\sqrt{-1}\omega_K$.

\subsection{Beauville--Bogomolov--Fujiki form and Riemann--Roch polynomial}\label{sec bbf form}
 Beauville \cite{beauville}, Bogomolov \cite{bogomolov}, and Fujiki \cite{fujiki} proved that there exists a quadratic form $q_X: H^2(X, \mathbb{R}) \to \mathbb{R}$ and a constant $c_X\in \mathbb{Q}_+$ such that for all $\alpha\in H^2(X, \mathbb{R})$,
$$\int\alpha^{2n}= c_X\cdot q_X(\alpha)^n. $$
The above equation determines $c_X$ and $q_X$ uniquely if assuming:
\begin{enumerate} 
\item $q_X$ is a primitive integral quadratic form on $H^2(X, \ZZ)$;
\item $q_X(\sigma + \overline{\sigma}) > 0$ for $0\neq \sigma \in H^{2,0}(X)$.
\end{enumerate}
Here $q_X$ and $c_X$ are called the {\it Beauville--Bogomolov--Fujiki form} and the {\it Fujiki constant} of $X$ respectively.

Recall the following important result by Fujiki \cite{fujiki} (see also \cite[Corollary 23.17]{gross} for a generalization).
\begin{thm}[{\cite{fujiki}, \cite[Corollary 23.17]{gross}}]\label{fujiki result}
Let $X$ be a hyperk\"{a}hler manifold of dimension $2n$. Assume that $\alpha\in H^{4j}(X,\mathbb{R})$ is of type $(2j, 2j)$ on all small deformations of $X$. Then there exists a constant ${\bf C}(\alpha)\in\mathbb{R}$ depending only on $\alpha$ such that
$$ \int \alpha\beta^{2n-2j}={\bf C}({\alpha})\cdot q_{X}(\beta)^{n-j}$$
for all $\beta\in H^{2}(X,\mathbb{R})$.
\end{thm}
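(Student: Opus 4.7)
The plan is to show that
$P_\alpha(\beta) := \int_X \alpha \cdot \beta^{2n-2j}$,
viewed as a homogeneous polynomial of degree $2n-2j$ on $H^2(X,\mathbb{R})$,
is a scalar multiple of $q_X(\beta)^{n-j}$; the scalar is then
$\mathbf{C}(\alpha)$. After $\mathbb{C}$-linear extension it suffices to prove the
corresponding polynomial identity on $H^2(X,\mathbb{C})$.

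First I would show that $P_\alpha$ vanishes on the isotropic quadric
$Q = \{q_X = 0\} \subset H^2(X,\mathbb{C})$. For every small deformation $X_t$ of $X$
and every $\sigma_t \in H^{2,0}(X_t)$, the wedge $\alpha \cdot \sigma_t^{2n-2j}$ has
Hodge type $(2n, 2j)$ on $X_t$, since $\alpha$ stays of type $(2j,2j)$ by hypothesis
while $\sigma_t^{2n-2j}$ is of type $(2n-2j, 0)$; for $j<n$ this is not $(2n,2n)$, so
its integral vanishes and $P_\alpha(\sigma_t) = 0$. By the local Torelli theorem for
hyperk\"{a}hler manifolds, the periods $[\sigma_t]$ sweep out an open subset of the
period domain, hence of the irreducible quadric $Q$; polynomial vanishing on an open
subset forces $P_\alpha|_Q \equiv 0$, so $q_X$ divides $P_\alpha$.

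To upgrade this to divisibility by $q_X^{n-j}$, I would invoke the monodromy
representation. Each monodromy operator $\varphi$ comes from a diffeomorphism of $X$
preserving intersection numbers, so
$P_\alpha(\varphi \beta) = \int_X \varphi^{-1}(\alpha) \cdot \beta^{2n-2j}$.
The hypothesis that $\alpha$ is of type $(2j,2j)$ on every nearby Hodge structure,
combined with local Torelli, forces $\alpha$ to be fixed by the Zariski closure of the
monodromy action on $H^*(X,\mathbb{R})$. By the hyperk\"{a}hler monodromy density
results of Verbitsky and Markman, the image of monodromy in $O(q_X,\mathbb{C})$ is
Zariski dense, so $P_\alpha$ is $O(q_X,\mathbb{C})$-invariant as a polynomial on
$H^2(X,\mathbb{C})$. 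The first fundamental theorem of invariant theory then identifies
$P_\alpha$ with a polynomial in $q_X$, and homogeneity plus reality yield
$P_\alpha = \mathbf{C}(\alpha)\, q_X^{n-j}$ with $\mathbf{C}(\alpha) \in \mathbb{R}$.

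The main obstacle is establishing the full monodromy invariance of $\alpha$ from the
pointwise Hodge-type hypothesis; this is where the specific geometry of
hyperk\"{a}hler manifolds enters in a nontrivial way. A more Hodge-theoretic
alternative is to plug $\beta = u\sigma_t + v\bar\sigma_t$ into $P_\alpha$: only the
middle term of the binomial expansion of $\beta^{2n-2j}$ survives the $(2n,2n)$ type
constraint, so $P_\alpha(\beta)$ restricts to a constant multiple of $(uv)^{n-j}$ on
each such two-plane, as does $q_X(\beta)^{n-j}=(2uv\,q_X(\sigma_t,\bar\sigma_t))^{n-j}$.
Showing that the resulting ratio is $t$-independent and that these two-planes
Zariski-densely cover $H^2(X,\mathbb{C})$ would then close the argument without
invoking invariant theory.
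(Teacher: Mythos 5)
First, note that the paper does not prove this statement at all: it is quoted verbatim from Fujiki and from \cite[Corollary~23.17]{gross}, so there is no in-paper proof to compare against; I am therefore judging your argument against the standard proof in the cited references.

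Your first step (vanishing of $P_\alpha$ on the period domain by Hodge type, hence $q_X\mid P_\alpha$ by local Torelli and irreducibility of the quadric) is correct, but the upgrade to $q_X^{n-j}\mid P_\alpha$ via monodromy has a genuine gap. The assertion that ``$\alpha$ is of type $(2j,2j)$ on all small deformations'' forces $\alpha$ to be fixed by the Zariski closure of the monodromy is not justified and does not follow: the hypothesis is purely local on the Kuranishi space, whereas monodromy invariance is a global statement about loops in moduli, and nothing in the hypothesis controls $\varphi^{-1}(\alpha)$ for a monodromy operator $\varphi$. Moreover, the density of monodromy in $O(q_X)$ (Verbitsky, Markman) is a far deeper input than the statement being proved and was certainly not available to Fujiki. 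Your alternative sketch with $\beta=u\sigma_t+v\overline{\sigma}_t$ is closer to the truth but, as you note, leaves the $t$-independence of the ratio and the Zariski density of the union of these (non-holomorphically varying) two-planes unaddressed.

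The standard argument closes the gap with a small strengthening of your first step: for $\sigma_t$ in the period domain and arbitrary $\mu\in H^2(X,\mathbb{C})$, expand
$$P_\alpha(\sigma_t+s\mu)=\sum_i\binom{2n-2j}{i}s^{i}\int\alpha\,\sigma_t^{2n-2j-i}\mu^{i},$$
and observe that the $(2n,2n)$-component of $\alpha\,\sigma_t^{2n-2j-i}\mu^{i}$ vanishes for $i<n-j$ (the $(0,q)$-degree of $\mu^i$ is at most $2i$, while $2(n-j)$ is needed). Hence $P_\alpha$ vanishes to order $n-j$ along an open subset of the irreducible quadric $\{q_X=0\}$ (irreducible since $b_2\geq 3$), so $q_X^{n-j}\mid P_\alpha$, and equality of degrees gives $P_\alpha=\mathbf{C}(\alpha)\,q_X^{n-j}$. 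Reality of $\mathbf{C}(\alpha)$ follows by evaluating at a real class with $q_X\neq 0$. This replaces both the monodromy step and the invariant theory, and uses only the Hodge-type hypothesis you already invoked.
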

 A direct application of this result (cf. \cite[1.11]{huybrechts}) is that, for a line bundle $L$ on $X$, the
Hirzebruch--Riemann--Roch formula gives
$$\chi(X,L)=\sum_{i=0}^{n}\frac{1}{(2i)!}\int\td_{2n-2i}(X)(c_{1}(L))^{2i}=\sum_{i=0}^{n}\frac{a_{2i}}{(2i)!}q_{X}\big(c_{1}(L)\big)^{i}, $$ 
where $$a_{2i}={\bf C}(\td_{2n-2i}(X)).$$
The polynomial $\RR_X(q):=\sum_{i=0}^n\frac{a_{2i}}{(2i)!}q^{i}$ is called the {\it Riemann--Roch polynomial} of $X$.
\subsection{Characteristic values}

For a hyperk\"{a}hler manifold, the characteristic value is defined by Nieper-Wi{\ss}kirchen, which is a quadratic form proportional to $q_X$. This quadratic form is more convenient than $q_X$ when playing with Rozansky--Witten classes and Riemann--Roch polynomials.

\begin{definition}[{\cite[Definition 17]{nieper-jag}}]\label{def lambda}
Let $X$ be a hyperk\"{a}hler manifold.
For any $\alpha\in H^2(X, \mathbb{R})$, Nieper-Wi{\ss}kirchen defined the {\it characteristic value} of $\alpha$,
$$
\lambda(\alpha):=\begin{cases}\frac{24n \int \exp(\alpha)}{ \int c_{2}(X) \exp(\alpha)} & \text{if well-defined;}\\ 0 & \text{otherwise.}\end{cases}
$$
For simplicity, we often denote $\lambda_\sigma:=\lambda(\sigma+\sigmabar)$.
\end{definition}

\begin{prop}[{cf. \cite[Proposition 10]{nieper-jag}}]\label{prop lambda=q}
$\lambda(\alpha)$ is a positive constant multiple of $q_X(\alpha)$, more precisely,
$$
\lambda(\alpha)=\frac{12c_X}{(2n-1){\bf C}(c_2(X))}q_X(\alpha).
$$
\end{prop}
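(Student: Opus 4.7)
The plan is to reduce both integrals in the definition of $\lambda(\alpha)$ to powers of $q_X(\alpha)$ via Fujiki-type formulas, and then to divide.

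First I would expand $\exp(\alpha) = \sum_{k\geq 0} \alpha^k/k!$ and observe that, by dimension, only the top-degree term survives when integrated against a class of fixed even degree. Thus
\[
\int \exp(\alpha)=\frac{1}{(2n)!}\int \alpha^{2n},\qquad \int c_2(X)\exp(\alpha)=\frac{1}{(2n-2)!}\int c_2(X)\,\alpha^{2n-2}.
\]
For the first integral, the defining property of the Fujiki constant and the BBF form gives $\int \alpha^{2n}=c_X\,q_X(\alpha)^n$. For the second, I would invoke Theorem~\ref{fujiki result} applied to $\alpha_0:=c_2(X)\in H^{4}(X,\mathbb{R})$: since $c_2(X)$ is a characteristic class of $X$ and hence is preserved under small deformations (it is a topological invariant), and since Chern classes on a compact Kähler manifold are of pure Hodge type $(p,p)$, the class $c_2(X)$ remains of type $(2,2)$ on all small deformations. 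Thus Fujiki's theorem yields
\[
\int c_2(X)\,\alpha^{2n-2}={\bf C}(c_2(X))\,q_X(\alpha)^{n-1}.
\]

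Dividing and simplifying:
\[
\lambda(\alpha)=\frac{24n\cdot c_X\,q_X(\alpha)^n/(2n)!}{{\bf C}(c_2(X))\,q_X(\alpha)^{n-1}/(2n-2)!}=\frac{24n\,(2n-2)!}{(2n)!}\cdot\frac{c_X}{{\bf C}(c_2(X))}\,q_X(\alpha)=\frac{12c_X}{(2n-1){\bf C}(c_2(X))}\,q_X(\alpha),
\]
which is the asserted formula whenever $\lambda(\alpha)$ is defined, i.e.\ whenever $\int c_2(X)\exp(\alpha)\neq 0$.

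Finally I would address the boundary case: if $q_X(\alpha)=0$, then ${\bf C}(c_2(X))\,q_X(\alpha)^{n-1}=0$, so $\int c_2(X)\exp(\alpha)=0$, and by convention $\lambda(\alpha)=0$, matching the right-hand side. So the identity holds on all of $H^2(X,\mathbb{R})$. There is no serious obstacle: the only point to verify carefully is the applicability of Theorem~\ref{fujiki result} to $c_2(X)$, which rests on the standard fact that Chern classes are of Hodge type $(p,p)$ and deformation-invariant as topological classes.
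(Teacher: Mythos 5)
Your proof is correct and is essentially the standard argument: the paper states Proposition~\ref{prop lambda=q} without proof, citing \cite[Proposition 10]{nieper-jag}, and your direct verification --- expanding $\exp(\alpha)$, applying the Fujiki relation to $\alpha^{2n}$ and Theorem~\ref{fujiki result} to $c_2(X)$ (which is of type $(2,2)$ on all small deformations since it is a deformation-invariant Chern class), and dividing --- is exactly the intended route. The only point you leave aside is the positivity of the constant $\frac{12c_X}{(2n-1){\bf C}(c_2(X))}$, which the paper addresses in the remark immediately following the proposition, namely ${\bf C}(c_2(X))>0$ by Yau's solution to the Calabi conjecture.
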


Note that to study $\RR_X$, we may always view it as a polynomial in terms of $\lambda$. Here we remark that this multiple is positive (i.e., ${\bf C}(c_2(X))>0$) by Yau's solution to Calabi's conjecture \cite{yau} (cf. \cite[Proposition~3.11]{nieper-book}).

Recall that a line bundle $L$ on a projective manifold $X$ is said to be {\it nef} if $L\cdot C\geq 0$ for any curve $C\subset X$, moreover, it is said to be {\it big} if $L^{\dim X}>0$. We have the following easy lemma. 

\begin{lem}[{cf. \cite[1.10]{huybrechts}}]\label{lemma qL>0}Let $X$ be a hyperk\"{a}hler manifold and fix a non-zero $\sigma\in H^0(X, \Omega^2_X)$. Let $L$ be a line bundle on $X$.
\begin{enumerate}
\item If $X$ is projective and $L$ is nef and big, then $q_X(c_1(L))>0$ and $\lambda(L)>0$.
\item $\lambda_\sigma=\lambda(\sigma+\sigmabar)>0$.
\end{enumerate} 
\end{lem}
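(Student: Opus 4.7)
The plan is to dispatch the two parts by unwinding definitions and the normalization axioms of the Beauville--Bogomolov--Fujiki form, together with Proposition~\ref{prop lambda=q}, which already tells us that $\lambda(\alpha)$ is a \emph{positive} scalar multiple of $q_X(\alpha)$. So in both parts, it suffices to establish positivity of $q_X$ on the relevant class.

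For part (2), I would simply observe that the positivity $q_X(\sigma+\sigmabar)>0$ is built into the normalization axioms of $q_X$ listed in Section~\ref{sec bbf form}: axiom~(2) there asserts $q_X(\sigma+\sigmabar)>0$ for any $0\neq\sigma\in H^{2,0}(X)$. (Equivalently, by the setup in Section~\ref{sec IJK}, $\sigma+\sigmabar=2\omega_J$ is a K\"ahler form with respect to the complex structure $J$, and K\"ahler classes lie in the positive cone of $q_X$.) Then Proposition~\ref{prop lambda=q} gives $\lambda_\sigma>0$ immediately.

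For part (1), the starting point is that $L$ nef and big on the projective $X$ of dimension $2n$ gives $L^{2n}>0$. By the Fujiki relation, $L^{2n}=c_X\cdot q_X(c_1(L))^n$, and since $c_X>0$, we conclude $q_X(c_1(L))^n>0$; in particular $q_X(c_1(L))\neq 0$. To pin down the sign, I would use a standard approximation: pick an ample class $H$, so that $L+\varepsilon H$ is ample for every $\varepsilon>0$ and hence represented by a K\"ahler class. Since the K\"ahler cone lies in the positive cone of $q_X$, $q_X(c_1(L+\varepsilon H))>0$ for every $\varepsilon>0$; letting $\varepsilon\to 0$ and using continuity of $q_X$ yields $q_X(c_1(L))\geq 0$. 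Combined with $q_X(c_1(L))\neq 0$ this forces $q_X(c_1(L))>0$, and once more Proposition~\ref{prop lambda=q} upgrades this to $\lambda(L)>0$.

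There is no real obstacle here; the statement is essentially a bookkeeping corollary of Proposition~\ref{prop lambda=q} together with the standard fact that the (nef and big)-cone lies in the closure of the positive cone of $q_X$. The only mild care needed is that $q_X(c_1(L))^n>0$ alone does not determine the sign of $q_X(c_1(L))$ when $n$ is even, which is precisely why the approximation by ample classes is inserted.
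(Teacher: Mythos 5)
Your argument is correct, and since the paper itself gives no proof of this lemma (it is simply cited to \cite[1.10]{huybrechts}), your write-up is exactly the standard argument from that reference: part (2) is the normalization axiom for $q_X$ plus Proposition~\ref{prop lambda=q}, and part (1) combines bigness with the Fujiki relation and the perturbation $L+\varepsilon H$ to fix the sign. In particular, you correctly identified the one point needing care, namely that $q_X(c_1(L))^n>0$ does not determine the sign of $q_X(c_1(L))$ for even $n$.
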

\subsection{Todd genus and Todd classes}\label{sec todd}
Let $X$ be a hyperk\"{a}hler manifold. It is well-known that all its odd Chern classes vanish. The {\it Todd genus} of $X$ (see \cite[(4.13)]{nieper-jag}) can be defined by
$$
\td(X)=\exp\left(-2\sum_{k=1}^\infty b_{2k}(2k)!\ch_{2k}(X)\right),
$$
where $\ch_{2k}(X)$ are the Chern characters of $X$ and $b_{2k}$ are the {\it modified Bernoulli numbers} defined by
$$ \sum_{k=0}^{\infty}b_{2k}x^{2k}=\frac{1}{2}\ln\frac{\sinh(x/2)}{x/2}.$$
The {\it square root of the Todd genus} of $X$ is defined by
$$
\td^{1/2}(X)=\exp\left(-\sum_{k=1}^\infty b_{2k}(2k)!\ch_{2k}(X)\right)
$$
which satisfies $(\td^{1/2}(X))^2=\td(X)$.
We will use $\td^{1/2}_{2k}=\td^{1/2}_{2k}(X)$ to denote the $2k$-th term of $\td^{1/2}(X)$.
For example, $\td^{1/2}_{0}=1$, $\td^{1/2}_{2}=\frac{1}{2}\td_2=\frac{1}{24}c_2(X)$, $\td^{1/2}_{4}=\frac{1}{5760}(7c_2^2(X)-4c_4(X))$.

Here we remark that, for hyperk\"{a}hler manifolds, rational Chern classes are determined by rational Pontrjagin classes (cf. \cite[Proposition 1.13]{nieper-book}), hence rational Chern classes (and hence Todd classes) are topological invariants of $X$. In particular, rational Chern classes are independent of complex structures.

\subsection{Some linear algebra on symplectic forms}
Let $\bf{k}$ be a field of characteristic zero, $V$ a $\bf{k}$-vector space, and $A$ a $\bf{k}$-algebra.

An element $\sigma \in {\bigwedge}^2V^*$ is called a {\it symplectic form} on $V$ if it defines a non-degenerate bilinear form on $V$.
Note that if a vector space $V$ admits a symplectic form $\sigma$, then its dimension is even, say $2n$. In this case we can always choose a symplectic basis $e_1,\dots, e_{2n}$ of $V$ such that $\sigma=\sum_{i=1}^n\vartheta^{2i-1}\wedge \vartheta^{2i}$, where $\vartheta^1,\dots, \vartheta^{2n}$ is the corresponding dual basis of $V^*$.

\begin{definition}\label{def contraction}
Let $V$ be a $\bf{k}$-vector space of dimension $2n$ admitting a symplectic form $\sigma$. The {\it contraction} by $\sigma$ is the map $\delta: \bigwedge V^*\otimes A \to \bigwedge V^*\otimes A$ define by
\begin{align*}
{}&\delta((\alpha_1\wedge\dots\wedge \alpha_l)\otimes a)\\
={}&\sum_{r=1}^n\left(\sum_{1\leq s<t\leq l}(-1)^{s+t-1}
\begin{aligned}
(\alpha_s(e_{2r-1})\alpha_t(e_{2r})-\alpha_s(e_{2r})\alpha_t(e_{2r-1}))\\
\cdot \, \alpha_1\wedge\dots\wedge\widehat{\alpha_s}\wedge\dots\wedge\widehat{\alpha_t}\wedge\dots\wedge \alpha_l
\end{aligned}\right)\otimes a
\end{align*}
for $\alpha_1,\dots, \alpha_l\in V^*$ and $a\in A$. Here $e_1,\dots, e_{2n}$ is a symplectic basis of $V$.
\end{definition}

Note that we can regard $\sigma$ as an operator $\sigma: \bigwedge V^*\otimes A \to \bigwedge V^*\otimes A$ by taking wedge product with $\sigma$, and consider the operator $\Pi: \bigwedge V^*\otimes A \to \bigwedge V^*\otimes A$
acting on ${\bigwedge}^p V^*\otimes A$ by multiplying with $p-n$.
 
\begin{prop}\label{prop local sl2}
Keep the above setting.
Then $(\sigma, \delta, \Pi)$ gives an $\mathfrak{sl}_2$-action on $\bigwedge V^*\otimes A$. Namely,
$$
[\sigma, \delta]=\Pi, \, [\Pi, \sigma]=2\sigma, \, [\Pi, \delta]=-2\delta. 
$$
\end{prop}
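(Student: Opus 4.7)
The plan is to verify the three bracket relations separately. Two of them are immediate from degree bookkeeping; only $[\sigma,\delta]=\Pi$ requires genuine work, and I would handle it by a tensor product decomposition that reduces the claim to a four-dimensional check. The coefficient algebra $A$ plays a purely passive role throughout and can be suppressed.

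For the brackets involving $\Pi$, I first observe that $\sigma$ (wedge with a $2$-form) raises wedge-degree by $2$ while $\delta$ lowers it by $2$. Since $\Pi$ acts on $\bigwedge^{p}V^{*}\otimes A$ as multiplication by $p-n$, a one-line check on a homogeneous $\alpha\in\bigwedge^{p}V^{*}\otimes A$ gives
\[
\Pi(\sigma\alpha)-\sigma(\Pi\alpha)=\bigl((p+2)-n-(p-n)\bigr)\sigma\alpha=2\sigma\alpha,
\]
and similarly $[\Pi,\delta]=-2\delta$.

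For $[\sigma,\delta]=\Pi$, I would fix a symplectic basis $e_1,\dots,e_{2n}$ with dual basis $\vartheta^{1},\dots,\vartheta^{2n}$ as in the setup, and decompose $V=\bigoplus_{i=1}^{n}W_i$ with $W_i=\mathrm{span}(e_{2i-1},e_{2i})$, each $W_i$ carrying its own two-dimensional symplectic form $\sigma_i:=\vartheta^{2i-1}\wedge\vartheta^{2i}$. This induces a Koszul tensor product decomposition $\bigwedge V^{*}\cong\bigotimes_{i=1}^{n}\bigwedge W_i^{*}$, under which I would show that $\sigma=\sum_i \sigma_i$ and $\delta=\sum_i \delta_i$, where $\sigma_i,\delta_i$ are the analogous wedge and contraction operators acting only on the $i$-th tensor factor. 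For $i\neq j$, both $\sigma_i$ and $\delta_j$ have even total degree and act on distinct factors, so they commute; hence $[\sigma,\delta]=\sum_i[\sigma_i,\delta_i]$. On each four-dimensional $\bigwedge W_i^{*}$, with basis $\{1,\vartheta^{2i-1},\vartheta^{2i},\vartheta^{2i-1}\wedge\vartheta^{2i}\}$, a direct computation shows $[\sigma_i,\delta_i]$ acts by $-1,0,0,+1$ respectively, which matches the local operator $\Pi_i$ given by \emph{wedge-degree minus $1$}. Summing, $\sum_i\Pi_i$ acts as $\sum_i(p_i-1)=p-n$ on $\bigwedge^{p_1}W_1^{*}\otimes\cdots\otimes\bigwedge^{p_n}W_n^{*}$, agreeing with $\Pi$.

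I expect the main obstacle to be the routine but delicate verification that the combinatorial formula for $\delta$ in Definition~\ref{def contraction} — with its sum over pairs of positions $s<t$ and sign $(-1)^{s+t-1}$ — actually splits factor-wise into $\sum_i\delta_i$. Concretely, one must check that when $\alpha_s,\alpha_t$ both lie in $W_j^{*}$ for the same $j$, the combinatorial sign $(-1)^{s+t-1}$ agrees with the Koszul sign of pulling this pair to the front within the $j$-th tensor factor; and that when $\alpha_s,\alpha_t$ lie in different $W_j^{*}$'s, the scalar prefactor $\alpha_s(e_{2r-1})\alpha_t(e_{2r})-\alpha_s(e_{2r})\alpha_t(e_{2r-1})$ vanishes for every $r$, so no cross terms arise. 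Once this sign bookkeeping is settled, the rest of the argument is entirely formal.
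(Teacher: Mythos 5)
Your proposal is correct and complete. Note, though, that the paper does not actually carry out this argument: its proof consists of the declaration that the statement is standard, together with the single sanity check $\delta(\sigma\otimes a)=n\otimes a$, whence $[\sigma,\delta](1\otimes a)=-n\otimes a=\Pi(1\otimes a)$ (which is the degree-$0$ instance of your computation). Your tensor-factorization argument is the standard full proof — it is the symplectic analogue of the proof of the local K\"ahler identities in \cite[Proposition 1.2.26]{huy-book}, a reference the paper uses elsewhere — and it buys an actual verification where the paper offers only an illustration. The one step you rightly flag as delicate does work out: for a basis monomial $\vartheta^{m_1}\wedge\dots\wedge\vartheta^{m_l}$ with $m_1<\dots<m_l$, the scalar prefactor in Definition~\ref{def contraction} vanishes unless $\{\alpha_s,\alpha_t\}=\{\vartheta^{2r-1},\vartheta^{2r}\}$ for a single $r$, so no cross terms between distinct $W_j$'s arise; and since $2r-1,2r$ are consecutive, such a pair occupies adjacent positions $t=s+1$ in the sorted monomial, giving $(-1)^{s+t-1}=+1$, in agreement with the factor-wise Koszul sign. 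With that settled, $\delta=\sum_i\delta_i$, the rank-one eigenvalue computation $(-1,0,0,+1)$ is as you state, and summing gives $[\sigma,\delta]=\sum_i(p_i-1)=p-n=\Pi$.
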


\begin{proof}
The proof is standard. To avoid tedious computations, we illustrate by the following example: it is easy to 
see that for any $a\in A$,
$$\delta(\sigma\otimes a)=\delta\bigg(\big(\sum_{i=1}^n\vartheta^{2i-1}\wedge \vartheta^{2i}\big)\otimes a\bigg)=n\otimes a,$$ hence $[\sigma, \delta](1\otimes a)=-\delta(\sigma\otimes a)=-n\otimes a =\Pi(1\otimes a).$
\end{proof}
\subsection{An $\mathfrak{sl}_2$-action on the cohomology of a hyperk\"{a}hler manifold}\label{sec sl2}
The cohomology of a hyperk\"{a}hler manifold has been studied by many authors, see for example \cite{fujiki, LL, Veb-coh}. In particular, there is a natural $\mathfrak{so}(4,1)$-action on the cohomology of a hyperk\"{a}hler manifold by \cite{Veb-so5}. In this paper, we mainly focus on a special $\mathfrak{sl}_2$-action induced by $\sigma$, which has been considered by Fujiki \cite{fujiki}, Huybrechts \cite{huy97}, and Nieper-Wi{\ss}kirchen \cite{nieper-phd}.

Let $X$ be a hyperk\"{a}hler manifold of dimension $2n$ and fix a non-zero $\sigma\in H^0(X, \Omega^2_X)$.
After a rescaling we may assume that $\sigma=\omega_J+\sqrt{-1}\omega_K$ as in Section~\ref{sec IJK}.
For any $0\leq p,q\leq 2n$, let $L_\sigma: H^{q}(X, \Omega_X^p)\to H^{q}(X, \Omega_X^{p+2})$ be the Lefschetz operator giving by the cup-product with $\sigma.$
Define $\Lambda_{\sigma/4}=*^{-1}\circ L_{\sigmabar/4}\circ *$ where $*$ is the Hodge operator associated to the K\"{a}hler metric $g$ compatible with the hyperk\"{a}hler structure of $X$, and define $\Pi: H^{q}(X, \Omega_X^p)\to H^{q}(X, \Omega_X^p)$ to be the map multiplying by $(p-n).$
Then we have
$$
[L_\sigma, \Lambda_{\sigma/4}]=\Pi, \, [\Pi, L_\sigma]=2L_\sigma, \, [\Pi, \Lambda_{\sigma/4}]=-2\Lambda_{\sigma/4}, 
$$
and hence $(L_\sigma, \Lambda_{\sigma/4}, \Pi)$ gives an $\mathfrak{sl}_2$-action on $H^{*}(X, \Omega_X^*)$ (cf. \cite[Proof of Theorem 6.3]{huy97}).

\begin{remark}\label{remark local sl2}
There is a natural local interpretation of this $\mathfrak{sl}_2$-action as the following: fix a point $x\in X$,
consider $V=\mathcal{T}_{X, x}$ the holomorphic tangent space and $A=\bigwedge \overline{\Omega}_{X, x}$, note that $\sigma_x$ is a symplectic form on $V$, then we can consider operators 
$(\sigma_x, \delta_x, \Pi_x)$ acting on $\bigwedge V^*\otimes A$ as in Definition~\ref{def contraction} and Proposition~\ref{prop local sl2}, which, on the level of cohomology, induce exactly $(L_\sigma, \Lambda_{\sigma/4}, \Pi)$ acting on $H^{*}(X, \Omega_X^*)$.
\end{remark}
\begin{definition}[{cf. \cite{fujiki}}]
A class $\alpha\in H^{*}(X, \Omega_X^k)$ is called {\it $\sigma$-primitive} if $\Lambda_{\sigma/4}(\alpha)=0$, which is equivalent to $L_{\sigma}^{n-k+1}(\alpha)=0$.
\end{definition}
The following lemma is standard.
\begin{lem}\label{sigma sigmabar commutes}
$[L_\sigma, L_\sigmabar]=[\Lambda_{\sigma/4}, L_\sigmabar]=0.$
\end{lem}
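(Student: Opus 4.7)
The plan is to argue the two commutation relations separately, treating the first as a formal consequence of graded-commutativity and the second via the pointwise interpretation of $\Lambda_{\sigma/4}$ from Remark~\ref{remark local sl2}.

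For $[L_\sigma, L_\sigmabar]=0$, observe that both $\sigma$ and $\sigmabar$ lie in $H^2(X)$, hence have even total degree. Cup product on $H^*(X)$ is graded-commutative, so $\sigma\wedge\sigmabar=\sigmabar\wedge\sigma$, and applying this identity inside any $L_\sigma L_\sigmabar(\alpha)=\sigma\wedge\sigmabar\wedge\alpha$ immediately gives the commutation. This step is essentially a tautology.

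For $[\Lambda_{\sigma/4}, L_\sigmabar]=0$, I would argue via the local model described in Remark~\ref{remark local sl2}. Fix a point $x\in X$, and set $V=\mathcal{T}_{X,x}$ and $A=\bigwedge\overline{\Omega}_{X,x}$, so that $\Lambda_{\sigma/4}$ is induced, on forms, by the contraction $\delta_x$ from Definition~\ref{def contraction} acting on $\bigwedge V^*\otimes A$. The key observation is structural: inspecting the explicit formula for $\delta$, the operator acts \emph{only} on the first tensor factor $\bigwedge V^*$, while the $A$-factor is merely carried along as an inert tensor component. On the other hand, $\sigmabar_x\in\bigwedge^2\overline{\Omega}_{X,x}$ sits naturally inside the factor $A$, and $L_\sigmabar$ acts by wedge multiplication entirely within that factor. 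Two operators acting on disjoint tensor factors of a tensor product always commute, so $[\delta_x, L_{\sigmabar_x}]=0$ at the level of forms at each point $x$. Passing to cohomology via the correspondence of Remark~\ref{remark local sl2} then yields $[\Lambda_{\sigma/4}, L_\sigmabar]=0$ on $H^*(X,\Omega_X^*)$.

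The main (and only) point requiring care is the descent from the pointwise identity to the cohomological one, which is handled by the naturality of the local-to-global construction of the $\mathfrak{sl}_2$-action recorded in Remark~\ref{remark local sl2}; no additional Kähler identity is needed since $\sigma$ and $\sigmabar$ are of pure bidegrees $(2,0)$ and $(0,2)$, so the holomorphic/antiholomorphic separation is exactly what makes the two operators act on orthogonal tensor factors.
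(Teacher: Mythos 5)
Your proof is correct, but the route for the second commutator differs from the paper's. The paper writes $\sigma=\omega_J+\sqrt{-1}\,\omega_K$, expands $L_{\sigmabar}=L_{\omega_J}-\sqrt{-1}L_{\omega_K}$ and $\Lambda_{\sigma/4}=\frac{1}{4}(\Lambda_{\omega_J}-\sqrt{-1}\Lambda_{\omega_K})$, and then quotes Verbitsky's commutation relations for the $\mathfrak{so}(4,1)$-action (the terms $[\Lambda_{\omega_J},L_{\omega_J}]-[\Lambda_{\omega_K},L_{\omega_K}]$ and $[\Lambda_{\omega_J},L_{\omega_K}]+[\Lambda_{\omega_K},L_{\omega_J}]$ cancel pairwise). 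You instead exploit the bidegree separation in the local model of Remark~\ref{remark local sl2}: the contraction $\delta$ by $\sigma$ acts only on the holomorphic factor $\bigwedge V^*$ while $L_{\sigmabar}$ acts only on the antiholomorphic factor $A=\bigwedge\overline{\Omega}_{X,x}$, and two even-degree operators supported on disjoint tensor factors commute. This is a clean and arguably more conceptual argument; what it buys is independence from Verbitsky's quaternionic identities, at the price of leaning on two facts that your write-up treats as automatic: (i) that $\Lambda_{\sigma/4}=*^{-1}\circ L_{\sigmabar/4}\circ *$ really is the pointwise contraction $\delta$ (this is exactly the content of Remark~\ref{remark local sl2}, which the paper states without proof, so citing it is fair, but it is the nontrivial input here, not a tautology), and (ii) that the pointwise identity descends to cohomology, which uses that $\sigma$ and $\sigmabar$ are parallel so that both operators preserve harmonic representatives. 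With those two points acknowledged, your argument is complete and at a level of rigor comparable to the paper's one-line appeal to \cite{Veb-so5}.
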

\begin{proof}
The first one is trivial. Let us consider the second one.
We may assume that $\sigma=\omega_J+\sqrt{-1}\omega_K$ as in Section~\ref{sec IJK}. Then by definition, 
$$
L_{\sigmabar}=L_{\omega_J}-\sqrt{-1}L_{\omega_K}, \Lambda_{\sigma/4}=\frac{1}{4}(\Lambda_{\omega_J}-\sqrt{-1}\Lambda_{\omega_K}).
$$
Then the conclusion follows immediately from \cite[(2.1)]{Veb-so5}.
\end{proof}
The following lemma is standard by the representation theory of $\mathfrak{sl}_2$, see for example \cite[Corollary 1.2.28]{huy-book}.
\begin{lem}\label{AL=L}For $\alpha\in H^{*}(X, \Omega_X^k)$ and $m\geq 1$,
$$[L_{\sigma}^m, \Lambda_{\sigma/4}](\alpha)=m(k-n+m-1)L_{\sigma}^{m-1}(\alpha).$$
In particular, if moreover $\alpha$ is $\sigma$-primitive, then $\Lambda_{\sigma/4}L_{\sigma}^{m}(\alpha)=m(n+1-k-m)L_{\sigma}^{m-1}(\alpha)$.
\end{lem}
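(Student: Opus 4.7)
The plan is to prove both assertions by a direct induction on $m$ using only the $\mathfrak{sl}_2$-commutation relations
$$[L_\sigma,\Lambda_{\sigma/4}]=\Pi,\qquad [\Pi,L_\sigma]=2L_\sigma,\qquad [\Pi,\Lambda_{\sigma/4}]=-2\Lambda_{\sigma/4}$$
recalled just before the statement, together with the fact that $\Pi$ acts on $H^{*}(X,\Omega_X^{k+2m})$ as the scalar $(k+2m-n)$. In particular, I will not need any further geometry of $X$; this is a purely representation-theoretic computation inside the abstract $\mathfrak{sl}_2$-module $H^{*}(X,\Omega_X^{*})$.

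For the base case $m=1$, the first identity reads $[L_\sigma,\Lambda_{\sigma/4}](\alpha)=\Pi(\alpha)=(k-n)\alpha$, which matches $1\cdot(k-n+1-1)L_\sigma^{0}(\alpha)$. For the inductive step, I expand
$$[L_\sigma^{m+1},\Lambda_{\sigma/4}]=L_\sigma[L_\sigma^{m},\Lambda_{\sigma/4}]+[L_\sigma,\Lambda_{\sigma/4}]\,L_\sigma^{m}=L_\sigma[L_\sigma^{m},\Lambda_{\sigma/4}]+\Pi\,L_\sigma^{m}.$$
Applied to $\alpha\in H^{*}(X,\Omega_X^k)$, the induction hypothesis turns the first term into $m(k-n+m-1)L_\sigma^{m}(\alpha)$, while $\Pi L_\sigma^{m}(\alpha)=(k+2m-n)L_\sigma^{m}(\alpha)$ since $L_\sigma^m(\alpha)$ has form-degree $k+2m$. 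Summing and rewriting the coefficient as
$$m(k-n+m-1)+(k+2m-n)=(m+1)(k-n+m)$$
closes the induction.

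For the second assertion, assume $\alpha$ is $\sigma$-primitive, so $\Lambda_{\sigma/4}(\alpha)=0$. Then
$$[L_\sigma^{m},\Lambda_{\sigma/4}](\alpha)=L_\sigma^{m}\Lambda_{\sigma/4}(\alpha)-\Lambda_{\sigma/4}L_\sigma^{m}(\alpha)=-\Lambda_{\sigma/4}L_\sigma^{m}(\alpha),$$
and substituting the first identity gives $\Lambda_{\sigma/4}L_\sigma^{m}(\alpha)=-m(k-n+m-1)L_\sigma^{m-1}(\alpha)=m(n+1-k-m)L_\sigma^{m-1}(\alpha)$.

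There is no real obstacle here: the argument is the standard $\mathfrak{sl}_2$-bookkeeping, and the only point that requires care is tracking how $\Pi$ acts on $L_\sigma^{m}(\alpha)$, i.e.\ remembering that $L_\sigma$ shifts the form-degree by $2$ rather than by $1$, which is the source of the asymmetric-looking coefficient $(k-n+m-1)$ in the final formula.
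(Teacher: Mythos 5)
Your proof is correct, and it is exactly the standard $\mathfrak{sl}_2$-induction that the paper invokes by citing \cite[Corollary 1.2.28]{huy-book} rather than writing out: the base case, the expansion $[L_\sigma^{m+1},\Lambda_{\sigma/4}]=L_\sigma[L_\sigma^{m},\Lambda_{\sigma/4}]+\Pi L_\sigma^{m}$, and the weight computation all check out, including the degree shift by $2$ that produces the coefficient $(k-n+m-1)$. Nothing to add.
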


\section{The Rozansky--Witten theory}\label{sec RW}
For a hyperk\"{a}hler manifold $X$ with a non-zero $\sigma\in H^0(X, \Omega^2_X)$, the Rozansky--Witten theory associates to every Jacobi diagram $\Gamma$ to a cohomology class $\RW_\sigma(\Gamma)$, which is due to Rozansky and Witten \cite{RW} and later developed by Kapranov \cite{Kapranov}. Later Hitchin and Sawon \cite{hitchinsawon} and Nieper-Wi{\ss}kirchen \cite{nieper-jag} discovered that this is a powerful tool to study the characteristic classes of hyperk\"{a}hler manifolds.

The main goal of this section is to apply the Rozansky--Witten theory to prove Corollary~\ref{lambda td}, which calculates $\Lambda_{\sigma/4}(\td^{1/2}_{2k})$ for a hyperk\"{a}hler manifold.
In order to explain the proof, we will briefly recall basic knowledge of the Rozansky--Witten theory, the readers may refer to \cite{Sawon-phd, nieper-phd, nieper-jag, nieper-book} for details. Most of the contents in this section are from \cite{nieper-jag}, while Proposition~\ref{RW diff local} and Theorem~\ref{RW diff} are originally claimed by Nieper-Wi{\ss}kirchen in \cite{nieper-phd}, and we provide a self-contained proof for the reader's convenience. 
\subsection{The graph homology space}
A
 {\it graph} is a collection of {\it vertices} connected by {\it edges},
 where every edge connects $2$ vertices. 
 A {\it flag} or a {\it half-edge} is an edge together with an adjacent vertex. So every edge consists of exactly $2$ flags, and every flag belongs to exactly $1$ vertex. Note that an edge or a vertex can be identified with the set of flags belonging to it. A vertex is called {\it univalent}, if there is only $1$ flag belonging to it, and it is called {\it trivalent}, if there are exactly $3$ flags belonging to it. 
A graph is called {\it vertex-oriented} if, for every vertex, a cyclic ordering of its flags is fixed.

\begin{definition}[Jacobi diagram]
A {\it Jacobi diagram} is a vertex-oriented graph with only univalent and trivalent vertices. 
A {\it trivalent Jacobi diagram} is a Jacobi diagram with no univalent vertices. The {\it degree} of a Jacobi diagram is the number of its vertices.
\end{definition}

When we draw a Jacobi diagram as a planar graph, we want the counter-clockwise ordering of the flags at each trivalent vertex in the drawing to be the same as the given cyclic ordering.

\begin{example}
\begin{enumerate}
\item The empty graph is a Jacobi diagram, which is denoted by $1$.
\item The Jacobi diagram consisting of $2$ univalent vertices connecting by $1$ edge is denoted by $\ell$, and called a {\it strut}.
\item The Jacobi diagram $\ominus$ consisting of $2$ trivalent vertices connecting by 3 edges is denoted by $\Theta$.

\item For each positive integer $k$, the {\it $2k$-wheel} $\bw_{2k}$ is a Jacobi diagram defined to be a closed path with $2k$ vertices and $2k$ edges, while every vertex has a third edge outside the closed path. So it contains $2k$ trivalent vertices and $2k$ univalent vertices. For example, the $8$-wheel $\bw_8$ looks like $\sun$.
\end{enumerate}
\end{example}

\begin{definition}[Graph homology space]
The space $\cB$ is defined to be the $\bQ$-vector space spanned by all Jacobi diagrams modulo the IHX relation and the anti-symmetry (AS) relation (see \cite{Thurston, nieper-jag} for definitions). The space $\cB'$ is defined to be the subspace of $\cB$ spanned by all Jacobi diagrams not containing $\ell$ as a component. The space $\cB^t$ is defined to be the subspace of $\cB$ spanned by all trivalent Jacobi diagrams. These spaces are graded by degrees, and bi-graded by the numbers of trivalent and univalent vertices. We denote $\cB_{k,l}$ to be the homogenous part of $\cB$ generated by Jacobi diagrams with $k$ trivalent and $l$ univalent vertices.
The completion of $\cB$ (resp. $\cB'$, $\cB^t$) with respect to the grading is denoted by $\hcB$ (resp. $\hcB'$, $\hcB^t$).
\end{definition}

There are 2 natural operations on the graph homology spaces.
\begin{definition}[Disjoint union]
The disjoint union of Jacobi diagrams induces a bilinear map
$$
\hcB\times \hcB \to \hcB: (\gamma, \gamma')\mapsto \gamma\gamma':=\gamma\cup \gamma'.
$$
By identifying $1\in \bQ$ with $1\in \hcB$, this gives a natural graded $\bQ$-algebra structure of $\hcB$.
\end{definition}

\begin{definition}[Differential operator]\label{def partial}
There is a differential operator $\partial: \hcB'\to \hcB'$ defined by
$$
\partial \Gamma=\sum_{\{u,v\}\subset U}\Gamma/\{u,v\}
$$
for every Jacobi diagram $\Gamma$.
Here $U$ is the set of univalent vertices of $\Gamma$, and $\Gamma/\{u,v\}$ is the Jacobi diagram obtaining by removing vertices $\{u, v\}$ and gluing $2$ edges belonging to $u,v$ into a new edge. Here $\Gamma/\{u,v\}$ admits a natural orientation from $\Gamma$ as trivalent vertices remain unchanged.
In other words, the action of $\partial$ on a Jacobi diagram means to glue $2$ of its univalent vertices in all possible ways. Note that $\partial: \hcB'\to \hcB'$ is a $\hcB^t$-linear map.
\end{definition}

\begin{example}
$\partial \bw_2=\Theta.$
\end{example}

\begin{definition}[Wheeling element]
The {\it wheeling element} $\Omega\in \hcB$ is defined
via the expression
$$
\Omega=\exp\left(\sum_{k=1}^{\infty}b_{2k}\bw_{2k}\right)
$$
using the graded $\bQ$-algebra structure of $\hcB$, where $b_{2k}$ are the modified Bernoulli numbers as in Section~\ref{sec todd}.
We may write $\Omega=\sum_{k=0}^{\infty}\Omega_{2k}$ where $\Omega_{2k}$ is the homogeneous
component of degree $4k$ of $\Omega$. For example, $\Omega_0=1$, $\Omega_2=\frac{1}{48}\bw_2$.
\end{definition}

The wheeling element $\Omega$ has an important property called the wheeling theorem (see \cite{Thurston}) by the knot theory.
The method of combining the wheeling theorem with Rozansky--Witten classes to deal with characteristic classes of hyperk\"{a}hler manifolds was discovered by Hitchin and Sawon \cite{hitchinsawon} and later generalized by Nieper-Wi{\ss}kirchen \cite{nieper-jag}.
As observed by Nieper-Wi{\ss}kirchen, all we need is the following special case. 
\begin{thm}[{\cite[Lemma 6.2]{Thurston}, \cite[Theorem 3.1]{nieper-jag}}]\label{wheeling thm}
As elements in $\hcB$, $\partial \Omega=\frac{\Theta}{48}\Omega$. 
\end{thm}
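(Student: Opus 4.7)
The plan is to reduce the wheeling identity to a closed-form statement about the generators $\bw_{2k}$ and $\Theta$, and then to match it against the generating function defining the modified Bernoulli numbers.

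First, I would establish that $\partial$ acts as a second-order differential operator on the graded $\bQ$-algebra $\hcB'$. Directly from Definition~\ref{def partial}, for any $\Gamma_1, \Gamma_2 \in \hcB'$,
$$
\partial(\Gamma_1 \Gamma_2) = (\partial\Gamma_1)\Gamma_2 + \Gamma_1(\partial\Gamma_2) + \mu(\Gamma_1, \Gamma_2),
$$
where $\mu(\Gamma_1, \Gamma_2)$ is the bilinear operation summing over all ways of gluing one univalent vertex of $\Gamma_1$ to one univalent vertex of $\Gamma_2$; indeed, each unordered pair of univalent vertices of $\Gamma_1\sqcup\Gamma_2$ is either internal to one factor or mixed, and $\partial$ sums over exactly such pairs. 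Iterating this product rule gives the Wick-type formula
$$
\partial(e^F) = \bigl(\partial F + \tfrac{1}{2}\mu(F,F)\bigr)\,e^F
$$
for any $F \in \hcB'$ of positive degree. Taking $F := \sum_{k \ge 1} b_{2k}\,\bw_{2k}$, so that $\Omega = e^F$, the theorem becomes equivalent to the closed identity
$$
\partial F + \tfrac{1}{2}\mu(F, F) = \tfrac{\Theta}{48}.
$$

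Second, I would verify this identity by computing in the wheel/theta basis. Gluing two legs of a single $2k$-wheel creates a chord which, via repeated application of the IHX and AS relations, collapses to a linear combination of smaller wheels together with a residual multiple of $\Theta$ (the only surviving purely trivalent two-loop shape modulo a wheel). Gluing one leg of $\bw_{2j}$ to one leg of $\bw_{2k}$ similarly reduces, through IHX, to a wheel of length $2j+2k-2$ plus lower-order corrections. Encoding the wheels by the substitution $x^{2k} \leftrightarrow \bw_{2k}$, the identity above translates into an analytic relation for the generating function
$$
f(x) = \sum_{k \ge 1} b_{2k}\,x^{2k} = \tfrac{1}{2}\log\tfrac{\sinh(x/2)}{x/2},
$$
which, after differentiating the defining expression and reading off $b_2 = \tfrac{1}{48}$, becomes a routine consequence of elementary calculus.

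The main obstacle lies in the reduction step. While it is morally clear that every trivalent Jacobi diagram produced by the two types of gluings reduces modulo IHX to a wheel with a scalar coefficient or to $\Theta$, tracking the combinatorial multiplicities precisely so that they reproduce the modified Bernoulli numbers is genuinely delicate. This bookkeeping is exactly the combinatorial content of the wheeling theorem of Bar-Natan--Garoufalidis--Rozansky--D.~Thurston \cite{Thurston}, whose full proof passes through the Duflo isomorphism in Lie theory. In practice, rather than redoing this combinatorial miracle by hand, one invokes \cite{Thurston} directly; Theorem~\ref{wheeling thm} above is the special case needed here, and this is precisely the approach taken in \cite{nieper-jag}.
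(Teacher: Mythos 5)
The paper offers no proof of this statement at all --- it is imported from Thurston's thesis and from Nieper-Wi{\ss}kirchen, with only a pointer to the latter's ``elementary proof'' --- and your proposal ultimately does the same, deferring the hard combinatorial core (the IHX reduction of the glued wheels and the matching of multiplicities with the modified Bernoulli numbers) to those very references. Your preliminary reduction --- that $\partial$ is a second-order operator on $\hcB'$, so the identity is equivalent to $\partial F+\tfrac{1}{2}\mu(F,F)=\frac{\Theta}{48}$ for $F=\sum_{k\ge 1}b_{2k}\bw_{2k}$ --- is correct and is indeed how the cited elementary proof begins, so your route is essentially the paper's: state it, cite it.
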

An elementary proof can be found in \cite{nieper-jag}.

\subsection{Rozansky--Witten classes in general setting}

Let $\bf{k}$ be a field of characteristic zero, $V$ a finite-dimensional $\bf{k}$-vector space, $A=\bigoplus_{i=0}^{\infty} A_i$ a skew-commutative $\mathbb{Z}$-graded $\bf{k}$-algebra, and $\sigma$ a symplectic form on $V$.
We will apply this general setting later to the case that $V=\mathcal{T}_{X, x}$ the holomorphic tangent space and $A=\bigwedge \overline{\Omega}_{X, x}$, where $x\in X$ is a point on a hyperk\"{a}hler manifold $X$.

For every Jacobi diagram $\Gamma$ with $k$ trivalent and $l$ univalent vertices and every $\alpha\in \Sym^3V \otimes A_1$, we define an element
$$
\RW_{\sigma, \alpha}(\Gamma)\in {\bigwedge}^lV^*\otimes A_k
$$
as the following (see \cite{Kapranov}, \cite[Section 4.1]{nieper-jag}).

Denote $T$ to be the set of trivalent vertices of $\Gamma$, denote $U$ to be the set of univalent vertices of $\Gamma$, and denote $F$ to be the set of flags of $\Gamma$. 
Recall that an edge is identified with $2$ flags belonging to it, and a vertex is identified with the set of its flags.
We can label the set $F=\{1,2,\dots, 3k+l-1, 3k+l\}$ such that the set of edges are just $E=\{\{1,2\},\dots, \{3k+l-1, 3k+l\}\}$. Identifying vertices with sets of flags, we may write the set of univalent vertices as $U=\{u_1, \dots, u_l\}\subset F$, and write the set of trivalent vertices as
$$
T=\{\{t_1,t_2, t_3\}, \cdots, \{t_{3k-2}, t_{3k-1}, t_{3k}\}\}\subset 2^F,
$$
where the ordering $\{t_{3i-2}, t_{3i-1}, t_{3i}\}$ coincides with the orientation of $\Gamma$ (which is a given cyclic ordering for each trivalent vertex).
Note that we have the relation
$$\coprod_{t\in T}t\cup \coprod_{u\in U} u=\coprod_{e\in E}e=F.$$
Divide $U=U'\cup U''$ where $U'=\{u'_{1},\dots, u'_{l_1}\}$ consists of univalent vertices connected to 
trivalent vertices, and $U''=\{u''_{1},\dots, u''_{2l_2}\}$ consists of univalent vertices contained in some component $\ell$ of $\Gamma$. Here $l=l_1+2l_2$ and $\Gamma$ has exactly $l_2$ copies of $\ell$ as connected components.

We may assume that for $1\leq j\leq l_2$, $(u''_{2j-1}, u''_{2j})=(3k+l_1+2j-1, 3k+l_1+2j)$, 
that is, in the above ordering, the last $l_2$ edges correspond to the components $\ell^{l_2}\subset \Gamma$. 
Note that for $1\leq i\leq l_1$, we have $1\leq u'_{i}\leq 3k+l_1$, so we may further assume that each $u'_{i}$ is even, that is, the flag belonging to it takes the second position in the ordering on the corresponding edge, which is just $\{u'_{i}-1, u'_{i}\}$. 

We may choose the labelling of $F$ properly such that 
\begin{align}
{}&
\bigwedge_{\substack{1\leq j\leq 3k+l_1\\ j\neq u'_1,\dots, u'_{l_1}}} f_j
=f_{t_1}\wedge\dots\wedge f_{t_{3k}}.\label{orientation} 
\end{align}
in ${\bigwedge} ({\bf k}^{\oplus 3k})$ where $\{f_{t_i}\}_{i=1}^{3k}$ is a basis of ${\bf k}^{\oplus 3k}$, and this condition is called the {\it compatibility with the orientation of $\Gamma$}. See Figure~\ref{figure 1} for an example of $\bw_2\cup \ell$ with $k=2$, $l=4$. 
\begin{figure}\caption{A labelling of $\bw_2\cup \ell$}\label{figure 1}
\begin{tikzpicture} 
\draw (1,0) node[above left] {5} node[below left] {7} node[above right] {1}-- (2,0) node[above] {2}; 
\draw (-2,0) node[above] {4} -- (-1,0) node[above left] {3} node[above right] {6} node[below right] {8} ; 
\draw (0,0) circle (1); 
\draw (3, -1) node[right] {10} -- (3,1)node[right] {9} ;
\filldraw (-1,0) circle (.1)
(1,0) circle (.1)
(-2,0) circle (.1)
(2,0) circle (.1)
(3,1) circle (.1)
(3,-1) circle (.1);
\end{tikzpicture} 
\end{figure}
\begin{remark}\label{remark orientation condition}
\begin{enumerate}
\item If we ignore the compatibility of orientation, then the Rozansky--Witten invariants are only defined up to a sign.
\item The compatibility condition \eqref{orientation} we state here is different from the one in \cite[(3.3)]{nieper-jag} up to a sign $(-1)^{l_1(l_1-1)/2}$ because we ignore the contribution from univalent vertices. 
In fact, the referee reminded the author that in \cite[(4.2)]{nieper-jag}, each isomorphism $V\otimes A_1 \to A_1 \otimes V$ introduces a sign change, and so the definition in this paper is indeed the same as that in \cite{nieper-jag}.

\item If we glue $u'_s$ and $u'_t$ in $U'$ ($1\leq u'_s<u'_t\leq 3k+l_1$), then we get $\Gamma_{s,t}:=\Gamma/\{u'_s, u'_s\}$ as in Definition~\ref{def partial}.
By the assumption, the opposite flags corresponding to the edges containing $u'_s, u'_t$ are $u'_s-1, u'_t -1$ respectively. 
Note that the set of flags $F_{s,t}$ (resp. the set of univalent vertices $U_{s,t}$) of $\Gamma_{s,t}$ is $F$ (resp. $U$) removing $u'_s, u'_t$, and the set of edges $E_{s,t}$ of $\Gamma_{s,t}$ is $E$ removing $\{u'_s-1, u'_s\}, \{u'_t-1, u'_t\}$ and adding $\{u'_s-1, u'_t-1\}$ as a new edge after the edge $\{3k+l_1-1, 3k+l_1\}$. Then the ordering of $F_{s,t}$ induced from $E_{s,t}$ satisfies the compatibility with the orientation of $\Gamma_{s,t}$ up to a sign $(-1)^{s+t-1}$. This is because from $\eqref{orientation}$, we have
\begin{align*}
\bigwedge_{\substack{1\leq j\leq 3k+l_1\\ j\neq u'_1,\dots, u'_{l_1}\\ j\neq u'_s-1, u'_t-1}} f_j \wedge (f_{u'_s-1}\wedge f_{u'_t-1})
=(-1)^{s+t-1} f_{t_1}\wedge\dots\wedge f_{t_{3k}}.
\end{align*}
For example, if we glue $2,4$ in Figure~\ref{figure 1}, we get a labeling of flags $\{5,6,7,8,1,3,9,10\}$ on $\Theta\cup \ell$ compatible with the natural orientation, as shown in Figure~\ref{figure 2}.
\begin{figure}\caption{A labelling of $\Theta\cup \ell$}\label{figure 2}
\begin{tikzpicture} 
\draw (-1,0) node[above left] {3} node[below left] {8} node[ right] {6}-- (1,0) node[above right] {1} node[ left] {5} node[below right] {7} ; 
\draw (0,0) circle (1); 
\draw (3, -1) node[right] {10} -- (3,1)node[right] {9} ;
\filldraw (-1,0) circle (.1)
(1,0) circle (.1)
(3,1) circle (.1)
(3,-1) circle (.1);
\end{tikzpicture} 
\end{figure}
\end{enumerate}
\end{remark}

Note that we may identify $\text{\rm End}(V)\simeq V\otimes V^*$. Consider an element in $(\Sym^3V \otimes A_1)^{\otimes |T|}\otimes \text{\rm End}(V)^{\otimes |U|}$
of the form 
$$s=\bigotimes_{i=1}^k(w_{i}^3\otimes a_i)\otimes \bigotimes_{j=1}^{l}(w'_j\otimes \alpha_j),$$
where $w_{i}\in V$, $a_i\in A_1$, $w'_j\in V$, and $\alpha_j\in V^*$ for each $i, j$.
For each $i, j$, we denote $v_{t_{3i-1}}=v_{t_{3i-2}}=v_{t_{3i}}=w_i$ and $v_{u_j}=w'_j$. 
Then we define 
$$\Phi^{\Gamma}(s):=\prod_{j=1}^{(3k+l)/2}\sigma(v_{2j-1},v_{2j})\cdot(\alpha_1\wedge\dots\wedge\alpha_l)\otimes (a_1\cdot\dots\cdot a_k).$$
Roughly speaking, $s$ assigns to every flag an element in $V$, and $\Phi^{\Gamma}$ contracts every two elements on the same edge by $\sigma$ in a way compatible with the orientation of $\Gamma$.
This in fact extends to a linear map 
$$\Phi^{\Gamma}: (\Sym^3V \otimes A_1)^{\otimes |T|}\otimes \text{\rm End}(V)^{\otimes |U|}\to {\bigwedge}^lV^*\otimes A_k. $$
This definition can be extended to $\Phi^{\gamma}$ for any $\mathbb{Q}$-linear combinations of Jacobi diagrams with $k$ trivalent and $l$ univalent vertices $\gamma$ by linearity.

\begin{definition}
With the above notation, for every Jacobi diagram $\Gamma$ with $k$ trivalent and $l$ univalent vertices and every $\alpha\in \Sym^3V \otimes A_1$, we define the element
$$
\RW_{\sigma, \alpha}(\Gamma):=\Phi^{\Gamma}(\alpha^{\otimes k}\otimes (\text{\rm id}_V)^{\otimes l})\in {\bigwedge}^lV^*\otimes A_k.
$$
This definition can be extended to $\RW_{\sigma, \alpha}(\gamma)\in {\bigwedge}V^*\otimes A$ for any $\mathbb{Q}$-linear combinations of Jacobi diagrams $\gamma$ by linearity.
\end{definition}


\begin{example}[{cf. \cite[Proposition 3, (4.5)]{nieper-jag}}]\label{example Phi w2k}
Let us compute $\Phi^{\bw_{2k}}.$
We fix the labeling of flags as the following:
the $2k$ univalent vertices are labelled as $U=\{2,4,6,\dots, 4k\}$ counter-clockwisely, the 
$2k$ trivalent vertices are labelled as 
\begin{align*}T=\{\{1,4k+1, 8k\}, \{3, 4k+3, 4k+2\}, {}&\{5, 4k+5, 4k+4\}, \dots,\\
{}& \{4k-1, 8k-1, 8k-2\}\}.
\end{align*}
Here $\{2i-1, 2i\}$ is the edge from a trivalent vertex to a univalent vertex for $1\leq i\leq 2k$.
Then the set of edges is just
$$
E=\{\{1,2\},\dots, \{8k-1, 8k\}\}.
$$
See Figure~\ref{figure 3}.
\begin{figure}\caption{A labelling of $\bw_{2k}$}\label{figure 3}
\begin{tikzpicture} [scale=1]
\tikzstyle{every node}=[font=\small,scale=0.5]
\draw (1,0) node[above left] {$4k+1$} node[below left] {$8k$} node[above right] {1}-- (1.5,0) node[above] {2}; 
\draw (0,1) node[above left] {3} node[below left] {$4k+3$} node[below right] {$4k+2$}-- (0,1.5) node[left] {4}; 
\draw (-1.5,0) node[above] {6} -- (-1,0) node[above left] {5} node[above right] {$4k+4$} node[below right] {$4k+5$} ; 
\draw (-0.7071,-0.7071) node[above right] {$\dots$} -- (-1.05,-1.05); 
\draw (0,-1) node[above left] {$8k-2$} node[above right] {$8k-1$} node[below right] {$4k-1$}-- (0,-1.5) node[right] {$4k$}; 
\draw (0,0) circle (1); 
\end{tikzpicture} 
\end{figure}
Note that this given labeling is compatible with the orientation of $\bw_{2k}$ up to a sign $(-1)$, as 
\begin{align}
{}&f_1\wedge f_3\wedge\dots \wedge f_{4k-1}\wedge f_{4k+1}\wedge\dots \wedge f_{8k}\label{compatible w2k}\\
={}&-(f_{1}\wedge f_{4k+1}\wedge f_{8k})\wedge \dots \wedge (f_{4k-1}\wedge f_{8k-1}\wedge f_{8k-2})\notag
 \end{align}
 in ${\bigwedge} ({\bf k}^{\oplus 6k})$ where $\{f_{i}\}$ is a basis of ${\bf k}^{\oplus 6k}$ (cf. \cite[Page 36]{Sawon-phd}). One can compare this labelling with the labelling of $\bw_2$ in Figure~\ref{figure 1} to feel the difference.

Then for an element in $(\Sym^3V \otimes A_1)^{\otimes 2k}\otimes \text{\rm End}(V)^{\otimes 2k}$
of the form 
$$s=\bigotimes_{i=1}^{2k}(w_i^3\otimes a_i)\otimes \bigotimes_{j=1}^{2k}(w'_j\otimes \alpha_j),$$
where $w_i\in V$, $a_i\in A_1$, $w'_j\in V$, and $\alpha_j\in V^*$ for each $1\leq i, j\leq 2k$,
we have
$$\Phi^{\bw_{2k}}(s)=-\prod_{i=1}^{2k}\bigg(\sigma(w_i,w_{i+1})\sigma(w_i,w'_{i})\bigg)\cdot\bigwedge_{i=1}^{2k}\alpha_i\otimes \prod_{i=1}^{2k}a_i.$$
Here we set $w_{2k+1}:=w_1$, and the $(-1)$ sign is from the compatibility with the orientation, i.e., Equation~\eqref{compatible w2k}.
\end{example}
Properties of the map $\Phi^{\Gamma}$ are summarized in \cite[Proposition 3]{nieper-jag}. Here we introduce a property originally observed by Nieper-Wi{\ss}kirchen \cite[(85)]{nieper-phd}.

\begin{prop}[{cf. \cite[(85)]{nieper-phd}}]\label{RW diff local}
Keep the above notation. Fix a Jacobi diagram $\Gamma$ with $k$ trivalent and $l$ univalent vertices, suppose moreover that $[\Gamma]\in \cB'$. 
Then for any $\beta\in (\Sym^3V \otimes A_1)^{\otimes k}$, 
$$\Phi^{\partial \Gamma}(\beta\otimes (\text{\rm id}_V)^{\otimes (l-2)})=\delta(\Phi^{\Gamma}(\beta\otimes (\text{\rm id}_V)^{\otimes l})).$$
In particular, $\RW_{\sigma, \alpha}(\partial \Gamma)=\delta(\RW_{\sigma,\alpha}(\Gamma))$ for any $\alpha\in \Sym^3V \otimes A_1$. Here $\delta$ is the contraction map in Definition~\ref{def contraction}.
\end{prop}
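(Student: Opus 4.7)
My plan is to expand both sides on a symplectic basis and match them term by term. By multilinearity of $\Phi^\Gamma$ in its first argument, it suffices to treat a pure tensor $\beta = \bigotimes_{i=1}^k (w_i^3 \otimes a_i)$. Since $[\Gamma] \in \cB'$, no component of $\Gamma$ is a strut $\ell$, so every univalent vertex lies in $U' = \{u'_1, \dots, u'_l\}$ and each $u'_i$ is joined by the edge $\{u'_i - 1, u'_i\}$ to a trivalent vertex whose $V$-label I denote $v_{u'_i - 1}$. Writing $\text{\rm id}_V = \sum_r e_r \otimes \vartheta^r$ in a fixed symplectic basis, plugging $\text{\rm id}_V$ into each univalent slot contracts each $e_r$ against $v_{u'_i - 1}$ via $\sigma$. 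Setting $\tau_i := \iota_{v_{u'_i - 1}} \sigma \in V^*$, this collapses to
\[
\Phi^\Gamma(\beta \otimes (\text{\rm id}_V)^{\otimes l}) = \prod_{e \in E_{TT}} \sigma(v_{e_1}, v_{e_2}) \cdot (\tau_1 \wedge \dots \wedge \tau_l) \otimes (a_1 \cdots a_k),
\]
where $E_{TT}$ denotes the set of edges joining two trivalent vertices.

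Next I would apply $\delta$ on the $\bigwedge V^*$ factor. Rewriting Definition~\ref{def contraction} basis-freely gives $\delta(\tau_1 \wedge \dots \wedge \tau_l) = \sum_{s<t} (-1)^{s+t-1} \pi(\tau_s, \tau_t) \cdot \tau_1 \wedge \dots \widehat{\tau_s} \dots \widehat{\tau_t} \dots \wedge \tau_l$, where $\pi \in \bigwedge^2 V$ is the Poisson bivector inverse to $\sigma$. The musical isomorphism $\sigma^{\sharp}:V^*\to V$ sends $\tau_i$ back to $v_{u'_i - 1}$ by construction, whence $\pi(\tau_s, \tau_t) = \tau_s(\sigma^{\sharp}(\tau_t)) = \sigma(v_{u'_s - 1}, v_{u'_t - 1})$; this is the key algebraic input.

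On the other side, $\partial \Gamma = \sum_{s<t} \Gamma/\{u'_s, u'_t\}$, and the new edge in the $(s,t)$-summand is $\{u'_s - 1, u'_t - 1\}$. Evaluating $\Phi^{\Gamma/\{u'_s, u'_t\}}(\beta \otimes (\text{\rm id}_V)^{\otimes (l-2)})$ using the labelling of $F_{s,t}$ induced from $\Gamma$ (with the new edge placed last) contributes exactly the factor $\sigma(v_{u'_s - 1}, v_{u'_t - 1})$ from the new edge, the unchanged $E_{TT}$-factors, and the wedge $\tau_1 \wedge \dots \widehat{\tau_s} \dots \widehat{\tau_t} \dots \wedge \tau_l$ from the remaining univalent vertices. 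Remark~\ref{remark orientation condition}(3) then supplies the orientation-correcting sign $(-1)^{s+t-1}$ needed to convert this induced labelling into a compatible one, and summing over $s<t$ matches the expansion of $\delta(\Phi^\Gamma(\beta \otimes (\text{\rm id}_V)^{\otimes l}))$ term by term. The ``in particular'' assertion then follows by specializing to $\beta = \alpha^{\otimes k}$.

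The hard part will be the sign bookkeeping. The $(-1)^{s+t-1}$ from Remark~\ref{remark orientation condition}(3) arises combinatorially from reindexing Jacobi diagrams after a collapse, whereas the $(-1)^{s+t-1}$ inside the expansion of $\delta$ is the permutation sign of pulling two entries out of a length-$l$ wedge product. Verifying that these two signs coincide—not just in absolute value, but with the correct relative placement inside the products—is the essential geometric content of the proposition; the identity $\pi(\tau_s, \tau_t) = \sigma(v_{u'_s - 1}, v_{u'_t - 1})$ is standard symplectic linear algebra. The hypothesis $[\Gamma] \in \cB'$ is used precisely so that $\partial \Gamma$ is well-defined and every $\tau_i$ arises from a univalent vertex attached to a genuine trivalent vertex, making the correspondence between ``gluing two univalent flags'' and ``contracting two $V^*$-factors'' bijective.
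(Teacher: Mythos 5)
Your proposal follows essentially the same route as the paper's proof: reduce to pure tensors, compute $\Phi^\Gamma(\beta\otimes(\text{\rm id}_V)^{\otimes l})$ as a wedge of the contracted covectors $\tau_i=\iota_{v_{u'_i-1}}\sigma$ times the trivalent--trivalent edge factors, use the identity expressing the double contraction against a symplectic basis as $\sigma(v_{u'_s-1},v_{u'_t-1})$, and match the $(s,t)$-terms of $\delta$ against the summands $\Gamma/\{u'_s,u'_t\}$ of $\partial\Gamma$ with the sign $(-1)^{s+t-1}$ supplied by Remark~\ref{remark orientation condition}(3). The only cosmetic difference is your basis-free phrasing via the Poisson bivector, and the sign bookkeeping you flag as the remaining work is handled in the paper exactly by the cited remark.
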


\begin{proof}
As in the construction, we may write the set of flags $F$ of $\Gamma$ by $\{1,2,\dots, 3k+l-1, 3k+l\}$
such that the set of edges $E$ are just $\{\{1,2\},\dots, \{3k+l-1, 3k+l\}\}$. Identifying vertices with sets of flags, we may write the set of univalent vertices $U$ as $\{u_1, \dots, u_l\}\subset F$, and write the set of trivalent vertices $T$ as
$$
\{\{t_1,t_2, t_3\}, \cdots, \{t_{3k-2}, t_{3k-1}, t_{3k}\}\}\subset 2^F,
$$
where the ordering $\{t_{3i-2}, t_{3i-1}, t_{3i}\}$ coincides with the orientation of $\Gamma$.
Note that $U=U'$ and $l=l_1$ by the assumption that $[\Gamma]\in \cB'$.
Without loss of generality, we may assume that for $1\leq i\leq l$, $(t_{3i-2}, u_i)=(2i-1, 2i)$, that is, in the above ordering, the $i$-th univalent vertex is connected to the $i$-th trivalent vertex via the $i$-th edge. 

Without loss of generality, we may assume that the element $\beta\in (\Sym^3V \otimes A_1)^{\otimes k}$ is of the form 
$$
\beta=\bigotimes_{i=1}^k(w_i^3\otimes a_i)
$$
where $v_{t_{3i-1}}=v_{t_{3i-2}}=v_{t_{3i}}=w_i\in V$ and $a_i\in A_1$ for each $i$.
Choose a symplectic basis $e_1,\dots, e_{2n}$ of $V$ such that $$\sigma=\sum_{i=1}^n\vartheta^{2i-1}\wedge \vartheta^{2i},$$ where $\vartheta^1,\dots, \vartheta^{2n}$ is the corresponding dual basis of $V^*$.
Then $\text{\rm id}_V=\sum_{m=1}^{2n}e_m\otimes \vartheta^m$ via $\text{\rm End}(V)\simeq V\otimes V^*$.
Then
\begin{align*}
{}&\Phi^{\Gamma}(\beta\otimes (\text{\rm id}_V)^{\otimes l})\\
={}&\Phi^{\Gamma}\left(\beta\otimes \bigg(\sum_{m=1}^{2n}e_m\otimes \vartheta^m\bigg)^{\otimes l}\right)\\
={}&\sum_{1\leq m_1,\dots,m_l\leq 2n}
\left(
\begin{aligned}
{}&\prod_{i=1}^{l}\sigma(v_{2i-1},e_{m_{i}})\cdot \prod_{i'=l+1}^{(3k+l)/2} \sigma(v_{2i'-1},v_{2i'})\\
{}&\cdot(\vartheta^{m_1}\wedge\dots\wedge\vartheta^{m_l})\otimes (a_1\cdot\dots\cdot a_k)
\end{aligned}
\right)\\
={}&{\bigwedge}_{i=1}^l\left(\sum_{m=1}^{2n}\sigma(v_{2i-1},e_{m}) \vartheta^m\right)\otimes \left(\prod_{i'=l+1}^{(3k+l)/2} \sigma(v_{2i'-1},v_{2i'})\cdot (a_1\cdot\dots\cdot a_k)\right)\\
={}&{\bigwedge}_{i=1}^l\left(\sum_{m=1}^{2n}\sigma(v_{2i-1},e_{m}) \vartheta^m\right)\otimes N.
\end{align*}
Here we denoted $N= \prod_{i'=l+1}^{(3k+l)/2} \sigma(v_{2i'-1},v_{2i'})\cdot (a_1\cdot\dots\cdot a_k)$.
Then 
{\tiny
\begin{align*}
{}&\delta(\Phi^{\Gamma}(\beta\otimes (\text{\rm id}_V)^{\otimes l}))\\
={}&\sum_{r=1}^{n}\sum_{1\leq s<t\leq l}\left(
\begin{aligned}
{}&(-1)^{s+t-1}\bigg(\sigma(v_{2s-1}, e_{2r-1})\sigma(v_{2t-1}, e_{2r})-\sigma(v_{2s-1}, e_{2r})\sigma(v_{2t-1}, e_{2r-1})\bigg)\\
{}&\cdot {\bigwedge}_{\substack{1\leq i\leq l\\i\neq s, t}}\left(\sum_{m=1}^{2n}\sigma(v_{2i-1},e_{m})\vartheta^m\right)
\end{aligned}\right)
\otimes N\\
={}&\sum_{1\leq s<t\leq l}
\left((-1)^{s+t-1}\sigma(v_{2s-1}, v_{2t-1}) \cdot {\bigwedge}_{\substack{1\leq i\leq l\\i\neq s, t}}\left(\sum_{m=1}^{2n}\sigma(v_{2i-1},e_{m})\vartheta^m\right)
\right)
\otimes N.
\end{align*}
}
Here we used the fact that 
\begin{align*}
{}&\sigma(v_{2s-1}, v_{2t-1})\\
={}& \sum_{r=1}^n\bigg(\sigma(v_{2s-1}, e_{2r-1})\sigma(v_{2t-1}, e_{2r})-\sigma(v_{2s-1}, e_{2r})\sigma(v_{2t-1}, e_{2r-1})\bigg).
\end{align*}

On the other hand, denote $\Gamma_{s,t}=\Gamma/\{u_s, u_t\}$ for ${\{u_s, u_t\}\subset U}$, note that 
$$
\partial \Gamma=\sum_{s<t}\Gamma_{s,t}.
$$
Here as the above notation, $(u_s, u_t)=(2s, 2t)$.
Now we compute $\Phi^{ \Gamma_{s,t}}(\beta\otimes (\text{\rm id}_V)^{\otimes (l-2)})$.
Note that the set of flags (resp. univalent vertices) of $\Gamma_{s,t}$ is $F$ (resp. $U$) removing $2s, 2t$, the set of edges of $\Gamma_{s,t}$ is $E$ removing $\{2s-1, 2s\}, \{2t-1, 2t\}$ and adding $\{2s-1, 2t-1\}$ as a new edge to the end.
So we may compute
{\tiny
\begin{align*}
{}&\Phi^{ \Gamma_{s,t}}(\beta\otimes (\text{\rm id}_V)^{\otimes (l-2)})\\
={}&\Phi^{ \Gamma_{s,t}}\left(\beta\otimes \bigg(\sum_{m=1}^{2n}e_m\otimes \vartheta^m \bigg)^{\otimes (l-2)}\right)\\
={}&\sum_{\substack{1\leq m_j\leq 2n\\1\leq j\leq l\\j\neq s,t}}
\left(
\begin{aligned}
{}&(-1)^{s+t-1} \prod_{\substack{1\leq i\leq l\\ i\neq s,t }}\sigma(v_{2i-1},e_{m_{i}})\cdot \prod_{i'=l+1}^{(3k+l)/2} \sigma(v_{2i'-1},v_{2i'})\cdot\sigma(v_{2s-1}, v_{2t-1})\\
{}&\cdot(\vartheta^{m_1}\wedge\dots\wedge\widehat{\vartheta^{m_s}}\wedge\dots\wedge\widehat{\vartheta^{m_t}}\wedge\dots \wedge\vartheta^{m_l})\otimes (a_1\cdot\dots\cdot a_k)
\end{aligned}
\right)\\
={}&(-1)^{s+t-1}\sigma(v_{2s-1}, v_{2t-1})\cdot{\bigwedge}_{\substack{1\leq i\leq l\\ i\neq s,t }}\left(\sum_{m=1}^{2n}\sigma(v_{2i-1},e_{m}) \vartheta^m\right)\otimes N.
\end{align*}}
Here the sign $(-1)^{s+t-1}$ comes from the compatibility of the orientation of $\Gamma_{s,t}$ (Remark~\ref{remark orientation condition}(3)).

To conclude, we get 
\begin{align*}\Phi^{\partial \Gamma}(\beta\otimes (\text{\rm id}_V)^{\otimes (l-2)})={}&\sum_{1\leq s<t\leq l}\Phi^{\Gamma_{s,t}}(\beta\otimes (\text{\rm id}_V)^{\otimes (l-2)})\\
={}&\delta(\Phi^{\Gamma}(\beta\otimes (\text{\rm id}_V)^{\otimes l}))
\end{align*}
by comparing the above computations.
\end{proof}

\subsection{Rozansky--Witten classes of hyperk\"{a}hler manifolds}
Let $X$ be a hyperk\"{a}hler manifold and fix a non-zero $\sigma\in H^0(X, \Omega^2_X)$.
Denote $A^k(X, E)$ to be the
space of $(0,k)$-forms with values in a holomorphic vector bundle $E$, and set $A^{l, k}(X) : = A^k (X, \Omega_X^l)$. Denote $\alpha_X$ to be a
Dolbeault representative of the {\it Atiyah class} of $X$. Recall that according to Kapranov \cite{Kapranov}, $\alpha_X\in A^1(X,\Sym^3 \mathcal{T}_X)$ when we identify $\mathcal{T}_X$ with $\Omega_X$ via $\sigma$. 

\begin{definition}[{\cite[Definition 14]{nieper-jag}}]
For a Jacobi diagram $\Gamma$ with $k$ trivalent and $l$ univalent vertices, we define
$\RW_{\sigma}(\Gamma) \in H^k(X,\Omega_X^l)$
to be the Dolbeault cohomology class of the ($\bar\partial$)-closed $(l,k)$-form $$\left(\frac{\sqrt{-1}}{2\pi}\right)^k\cdot (x\mapsto \RW_{\sigma_x, \alpha_{X,x}} (\Gamma)) \in A^{l,k} (X).$$
This definition can be extended to $\RW_{\sigma}(\gamma)$ for any $\mathbb{Q}$-linear combinations of Jacobi diagrams $\gamma$ by linearity.
\end{definition}

It was proved by Kapranov \cite{Kapranov} that $\RW_{\sigma}(\Gamma)=\RW_{\sigma}(\Gamma')$ if $[\Gamma]=[\Gamma']\in \hcB$. So we have a linear map $\RW_\sigma: \hcB\to H^{*}(X, \Omega_X^*)$ which maps 
 elements of $\hcB_{k,l}$ into $H^{k}(X, \Omega_X^l)$. The values of $\RW_\sigma$ are called the {\it Rozansky--Witten classes} of $X$. In fact, this map preserves $\mathbb{Q}$-algebra structures.
 
 \begin{prop}[{\cite[Proposition 6]{nieper-jag}}]\label{prop RW algebra}
 $\RW_\sigma: \hcB\to H^{*}(X, \Omega_X^*)$ is a morphism of $\mathbb{Q}$-algebras.
 \end{prop}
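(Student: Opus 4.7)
The plan is to reduce the algebra morphism property to a pointwise identity in $\bigwedge V^* \otimes A$ and then verify that identity by a direct choice of compatible labellings. The algebra structure on $\hcB$ is disjoint union, while the algebra structure on $H^*(X, \Omega_X^*)$ is cup product, which on the Dolbeault side is represented by wedge product of forms. Since the Rozansky--Witten class $\RW_\sigma(\Gamma)$ is the Dolbeault class of the $\bar\partial$-closed form $(\sqrt{-1}/(2\pi))^k \cdot (x \mapsto \RW_{\sigma_x, \alpha_{X,x}}(\Gamma))$, and the scalar factor satisfies $(\sqrt{-1}/(2\pi))^{k+k'} = (\sqrt{-1}/(2\pi))^k \cdot (\sqrt{-1}/(2\pi))^{k'}$, it suffices to establish the pointwise identity $\RW_{\sigma_x, \alpha_{X,x}}(\Gamma \cup \Gamma') = \RW_{\sigma_x, \alpha_{X,x}}(\Gamma) \wedge \RW_{\sigma_x, \alpha_{X,x}}(\Gamma')$ in $\bigwedge V^* \otimes A$ at each $x \in X$ for all Jacobi diagrams $\Gamma, \Gamma'$, and then pass back to Dolbeault cohomology.

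This pointwise statement in turn reduces to the purely algebraic factorization
\[
\Phi^{\Gamma \cup \Gamma'}\bigl(\beta \otimes \beta' \otimes (\mathrm{id}_V)^{\otimes(l+l')}\bigr) = \Phi^\Gamma\bigl(\beta \otimes (\mathrm{id}_V)^{\otimes l}\bigr) \wedge \Phi^{\Gamma'}\bigl(\beta' \otimes (\mathrm{id}_V)^{\otimes l'}\bigr)
\]
for any $\beta \in (\Sym^3 V \otimes A_1)^{\otimes k}$ and $\beta' \in (\Sym^3 V \otimes A_1)^{\otimes k'}$, after specializing to $\beta = \alpha_{X,x}^{\otimes k}$, $\beta' = \alpha_{X,x}^{\otimes k'}$. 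The approach is to label the flags of $\Gamma$ by $\{1, \ldots, 3k+l\}$ compatibly with the orientation of $\Gamma$ and the flags of $\Gamma'$ by $\{3k+l+1, \ldots, 3k+l+3k'+l'\}$ compatibly with the orientation of $\Gamma'$, both respecting the block structure required in the definition of $\Phi$. Since $\Gamma$ and $\Gamma'$ are disjoint components of $\Gamma \cup \Gamma'$, each edge lies entirely within one of them, and the resulting global labelling of $\Gamma \cup \Gamma'$ has trivalent vertex ordering $T \cup T'$. The compatibility condition \eqref{orientation} for $\Gamma \cup \Gamma'$ then follows from wedging the individual compatibility identities for $\Gamma$ and $\Gamma'$: no crossing permutations arise because the labels of $\Gamma'$-flags strictly exceed those of $\Gamma$-flags, so the blocks of trivalent flags remain in their natural positions.

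With these compatible labellings in place, the factorization unravels directly from the definition of $\Phi$: the $\sigma$-contractions on $\Gamma \cup \Gamma'$ split edge-by-edge between $\Gamma$ and $\Gamma'$ (each edge is internal to one of the two); the wedge product of the $\vartheta^{m_i}$'s arising from the univalent vertices factors as the $\Gamma$-part followed by the $\Gamma'$-part, since $\Gamma$-univalents have smaller labels; and the product of $A_1$-factors from trivalent vertices splits accordingly. The main obstacle I foresee is the sign bookkeeping for orientation compatibility, but the block-disjoint structure of the labellings means that every reordering needed is trivial, so no signs appear. Passing back to cohomology completes the proof: the wedge product of $\bar\partial$-closed Dolbeault forms represents the cup product of their cohomology classes, yielding $\RW_\sigma(\Gamma \cup \Gamma') = \RW_\sigma(\Gamma) \smile \RW_\sigma(\Gamma')$, so $\RW_\sigma$ is multiplicative. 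Since it is manifestly $\bQ$-linear and sends $1 \in \hcB$ to $1 \in H^*(X, \Omega_X^*)$, it is a morphism of $\bQ$-algebras.
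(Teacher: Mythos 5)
The paper does not actually prove this proposition --- it is imported verbatim from \cite[Proposition 6]{nieper-jag} --- so there is no in-paper argument to compare against. Your strategy (reduce cup product to wedge product of Dolbeault representatives, multiply the scalar prefactors, and boil everything down to the pointwise factorization $\Phi^{\Gamma\cup\Gamma'}(\beta\otimes\beta'\otimes(\mathrm{id}_V)^{\otimes(l+l')})=\Phi^{\Gamma}(\beta\otimes(\mathrm{id}_V)^{\otimes l})\cdot\Phi^{\Gamma'}(\beta'\otimes(\mathrm{id}_V)^{\otimes l'})$ via a block labelling) is the natural and correct route, and those reductions are fine.

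The gap is concentrated in the sentence claiming that ``no signs appear.'' Two things are glossed over. First, the labelling conventions of Section 3.2 require all strut components to occupy the last edge positions and all vertices of $U'$ to be even-labelled, so the naive block labelling of $\Gamma\cup\Gamma'$ is not admissible when $\Gamma$ contains struts and $\Gamma'$ is nonempty; one must move the strut edges of $\Gamma$ to the end and check this costs nothing (it does not, since a strut contributes two adjacent univalent flags and the compatibility condition \eqref{orientation} involves only trivalent flags --- but this must be said). Second, and more seriously, the product on $H^{*}(X,\Omega_X^{*})$ induced by wedging an $(l,k)$-form with an $(l',k')$-form is the \emph{graded} product on $\bigwedge V^{*}\otimes A$: moving the $A_k$-factor of $\Phi^{\Gamma}$ past the ${\bigwedge}^{l'}V^{*}$-factor of $\Phi^{\Gamma'}$ produces a Koszul sign $(-1)^{kl'}$, whereas $\Phi^{\Gamma\cup\Gamma'}$ outputs the unsigned juxtaposition $(\alpha_1\wedge\dots\wedge\alpha_{l+l'})\otimes(a_1\cdots a_{k+k'})$. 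Since $k\equiv l$ and $k'\equiv l'\pmod 2$ for every Jacobi diagram, this sign equals $(-1)^{kk'}$, which is not visibly $+1$ when both diagrams have an odd number of trivalent vertices. This is precisely the kind of subtlety addressed in Remark~\ref{remark orientation condition}(2), and it is the only nontrivial content of the proposition: you need either to show that the offending bigraded pieces of $\cB$ vanish, or to track how the sign is absorbed into the orientation-compatibility convention. Until then the proof is incomplete at its crux.
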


For special Jacobi diagrams, we have the following corresponding Rozansky--Witten classes.
\begin{prop}[{\cite{nieper-jag}}]\label{prop RW classes}Let $X$ be a hyperk\"{a}hler manifold and fix a non-zero $\sigma\in H^0(X, \Omega^2_X)$.
Then
\begin{enumerate}
\item $\RW_\sigma(\ell)=2\sigma$;
\item $\RW_\sigma(\Theta)=\frac{48}{\lambda_\sigma}\sigmabar$, where $\lambda_\sigma=\lambda(\sigma+\sigmabar)>0$ as in Definition~\ref{def lambda}; 
\item $\RW_{\sigma}(\bw_{2k})=-(2k)!\ch_{2k}(X);$
\item $\RW_{\sigma}(\Omega)=\td^{1/2}(X)$.
\end{enumerate}
\end{prop}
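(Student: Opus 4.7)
The strategy is to handle the four items in the order (1), (3), (4), (2), since (3) provides the input for (4), and (2) uses Proposition~\ref{RW diff local} together with the output of (3).

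Item (1) follows from a direct pointwise computation. Fix $x\in X$ with symplectic basis $e_1,\dots,e_{2n}$ of $V=\mathcal{T}_{X,x}$ so that $\sigma_x=\sum_i\vartheta^{2i-1}\wedge\vartheta^{2i}$. Writing $\mathrm{id}_V=\sum_m e_m\otimes\vartheta^m$ and unwinding the definition of $\Phi^\ell$ gives
$$
\Phi^\ell(\mathrm{id}_V\otimes\mathrm{id}_V)=\sum_{m,m'}\sigma_x(e_m,e_{m'})\,\vartheta^m\wedge\vartheta^{m'}=2\sigma_x,
$$
so $\RW_\sigma(\ell)=2\sigma$. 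For item (3), Example~\ref{example Phi w2k} already computes $\Phi^{\bw_{2k}}$ as a cyclic contraction of $2k$ copies of an element of $\Sym^3 V\otimes A_1$, with an overall $(-1)$ sign coming from the orientation identity~\eqref{compatible w2k}. Inserting the Kapranov Dolbeault representative of the Atiyah class $\alpha_X\in A^1(X,\Sym^3\mathcal{T}_X)$ at every trivalent vertex and including the $(\sqrt{-1}/2\pi)^{2k}$ normalization factor, the resulting $(2k,2k)$-form is exactly the standard Dolbeault trace expression representing $-(2k)!\,\ch_{2k}(X)$.

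Item (4) is a formal consequence of (3). Since $\RW_\sigma$ is a graded $\mathbb{Q}$-algebra morphism by Proposition~\ref{prop RW algebra}, it commutes with $\exp$ on positive-degree elements, so
$$
\RW_\sigma(\Omega)=\exp\Bigl(\sum_{k=1}^\infty b_{2k}\RW_\sigma(\bw_{2k})\Bigr)=\exp\Bigl(-\sum_{k=1}^\infty b_{2k}(2k)!\,\ch_{2k}(X)\Bigr)=\td^{1/2}(X),
$$
by the definition of $\td^{1/2}(X)$ in Section~\ref{sec todd}.

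For item (2), since $H^2(X,\mathcal{O}_X)=\mathbb{C}\cdot\overline{\sigma}$ is one-dimensional, write $\RW_\sigma(\Theta)=c\,\overline{\sigma}$ and solve for $c$. Because $\partial\bw_2=\Theta$, applying Proposition~\ref{RW diff local} fiberwise and using the identification of $\delta$ with $\Lambda_{\sigma/4}$ from Remark~\ref{remark local sl2} yields
$$
c\,\overline{\sigma}=\Lambda_{\sigma/4}\bigl(\RW_\sigma(\bw_2)\bigr)=-2\Lambda_{\sigma/4}\bigl(\ch_2(X)\bigr)=2\Lambda_{\sigma/4}\bigl(c_2(X)\bigr),
$$
using (3) and $c_1(X)=0$ (so $\ch_2(X)=-c_2(X)$). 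To isolate $c$, I would wedge both sides with $\sigma^n\wedge\overline{\sigma}^{n-1}$ and integrate: the left-hand side becomes a multiple of $\int\sigma^n\wedge\overline{\sigma}^n$, while the right-hand side is converted by the $\mathfrak{sl}_2$-identities of Lemma~\ref{AL=L} into an integral of the form $\int c_2(X)\wedge\sigma^{n-1}\wedge\overline{\sigma}^{n-1}$. Comparing with the formula for $\lambda_\sigma$ in Definition~\ref{def lambda}, expressed via $\int\exp(\sigma+\overline{\sigma})$ and $\int c_2(X)\exp(\sigma+\overline{\sigma})$, forces $c=48/\lambda_\sigma$. The main obstacle is the careful bookkeeping of combinatorial constants in this Lefschetz-type integration; everything else is essentially formal.
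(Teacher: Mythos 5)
Your proposal is correct, and for items (1), (3), (4) it runs essentially parallel to the paper: the paper simply cites \cite[(4.11), (4.12)]{nieper-jag} for (1) and (3) (your item (3) is likewise an assertion-level sketch --- the identification of the cyclic wheel contraction with $-(2k)!\ch_{2k}(X)$, including the orientation sign, is exactly the content of the cited formula and of Remark~\ref{remark orientation condition}(2), not something you actually verify), and derives (4) from (3) via Proposition~\ref{prop RW algebra} just as you do. Where you genuinely depart from the paper is item (2): the paper again just cites \cite[(4.16)]{nieper-jag}, whereas you deduce $\RW_\sigma(\Theta)$ from item (3) using $\partial\bw_2=\Theta$ and the compatibility $\RW_\sigma\circ\partial=\Lambda_{\sigma/4}\circ\RW_\sigma$ (Proposition~\ref{RW diff local}/Theorem~\ref{RW diff}, which are proved independently of Proposition~\ref{prop RW classes}, so there is no circularity), and then pin down the scalar by integrating against powers of $\sigma$ and $\sigmabar$. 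This is in effect the paper's Example~\ref{example k3} run in reverse and in arbitrary dimension, and it buys a self-contained derivation of (2) from (3) at the price of the Lefschetz bookkeeping you flag but do not carry out. That bookkeeping does close up: writing $\Lambda_{\sigma/4}(c_2(X))=b\,\sigmabar$ and using Lemma~\ref{AL=L} with $L_\sigma^n(c_2(X))=0$ gives
$$\int \Lambda_{\sigma/4}(c_2(X))\wedge\sigma^n\sigmabar^{\,n-1}=n\int c_2(X)\wedge\sigma^{n-1}\sigmabar^{\,n-1},$$
hence $b=n\int c_2(X)\sigma^{n-1}\sigmabar^{\,n-1}\big/\int\sigma^n\sigmabar^{\,n}$, while expanding $\lambda_\sigma$ by type considerations gives $\lambda_\sigma=\frac{24}{n}\int\sigma^n\sigmabar^{\,n}\big/\int c_2(X)\sigma^{n-1}\sigmabar^{\,n-1}$, so $2b=48/\lambda_\sigma$ as required. (Watch the sign when applying Lemma~\ref{AL=L}: the commutator term enters with coefficient $+n$, not $-n$.)
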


\begin{proof}
(1)--(3) follow from \cite[(4.11), (4.16), (4.12)]{nieper-jag} (cf. Remark~\ref{remark orientation condition}(2)). (4) follows from (3) and Proposition~\ref{prop RW algebra} (cf. \cite[(4.14)]{nieper-jag}).
\end{proof}
\subsection{Conclusions}
As a direct application of Proposition~\ref{RW diff local} and Remark~\ref{remark local sl2}, we have the following theorem originally observed by Nieper-Wi{\ss}kirchen \cite[(85)]{nieper-phd}.

\begin{thm}[{Nieper-Wi{\ss}kirchen \cite[(85)]{nieper-phd}}]\label{RW diff}
Let $X$ be a hyperk\"{a}hler manifold and fix a non-zero $\sigma\in H^0(X, \Omega^2_X)$.
Then for any $\gamma\in \hcB'$,
$\RW_{\sigma}(\partial \gamma)=\Lambda_{\sigma/4}(\RW_\sigma(\gamma)).$
\end{thm}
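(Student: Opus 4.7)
The plan is to deduce the global, cohomological identity from its pointwise, linear-algebraic counterpart, which is exactly the content of Proposition~\ref{RW diff local}. Both sides of the claimed equality are $\bQ$-linear in $\gamma \in \hcB'$, so it suffices to prove it when $\gamma = [\Gamma]$ for a single Jacobi diagram $\Gamma$ with $k$ trivalent and $l$ univalent vertices, containing no strut component.

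First, I would unravel the definition of $\RW_\sigma(\Gamma)$: it is the Dolbeault class represented by the $(l,k)$-form $\bigl(\tfrac{\sqrt{-1}}{2\pi}\bigr)^k\bigl(x \mapsto \RW_{\sigma_x, \alpha_{X,x}}(\Gamma)\bigr)$, and similarly $\RW_\sigma(\partial\Gamma)$ is represented by $\bigl(\tfrac{\sqrt{-1}}{2\pi}\bigr)^k\bigl(x \mapsto \RW_{\sigma_x, \alpha_{X,x}}(\partial\Gamma)\bigr)$ (note that $\partial$ does not alter the number of trivalent vertices, so the prefactor is unchanged). Applying Proposition~\ref{RW diff local} at each $x \in X$ with $V = \mathcal{T}_{X,x}$ and $A = \bigwedge \overline{\Omega}_{X,x}$ and $\alpha = \alpha_{X,x}$, we obtain the pointwise identity
\begin{equation*}
\RW_{\sigma_x,\alpha_{X,x}}(\partial \Gamma) = \delta_x\bigl(\RW_{\sigma_x,\alpha_{X,x}}(\Gamma)\bigr),
\end{equation*}
where $\delta_x$ is the contraction operator of Definition~\ref{def contraction} associated with the symplectic form $\sigma_x$.

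Next, I would invoke Remark~\ref{remark local sl2}, which says precisely that on the level of forms (and thus on Dolbeault cohomology) the operator $\Lambda_{\sigma/4}$ is induced pointwise by $\delta_x$. Since $\delta_x$ depends only on the symplectic pairing and acts fiberwise, the two Dolbeault representatives differ by applying $\delta_x$ fiberwise, and passing to cohomology this intertwining becomes the $\Lambda_{\sigma/4}$-action on $H^k(X,\Omega_X^{l-2})$. Combining the two displayed facts gives
\begin{equation*}
\RW_\sigma(\partial \Gamma) = \Lambda_{\sigma/4}\bigl(\RW_\sigma(\Gamma)\bigr),
\end{equation*}
which is the desired identity, and linear extension handles arbitrary $\gamma \in \hcB'$.

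The only subtle point I would verify carefully is the compatibility between the fiberwise contraction $\delta_x$ and the global operator $\Lambda_{\sigma/4} = *^{-1} \circ L_{\sigmabar/4} \circ *$ at the level of Dolbeault representatives, rather than just cohomologically; this is exactly the content asserted in Remark~\ref{remark local sl2}. In particular, one must check that the $\bar\partial$-closedness of the Rozansky--Witten form is preserved under $\delta_x$, which follows from the fact that $\delta_x$ is algebraic in $\sigma_x$ and $\sigma$ is holomorphic, so $\delta$ commutes with $\bar\partial$. The restriction to $\hcB'$ (no strut components) is natural because $\partial$ is only defined on $\hcB'$ in Definition~\ref{def partial}; all other ingredients pass without modification. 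This is really the only step requiring care — once the local-to-global translation is set up, the theorem is an immediate consequence of Proposition~\ref{RW diff local}.
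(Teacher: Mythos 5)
Your proposal is correct and follows essentially the same route as the paper, which derives the theorem as a direct application of Proposition~\ref{RW diff local} together with the local interpretation of $(L_\sigma,\Lambda_{\sigma/4},\Pi)$ given in Remark~\ref{remark local sl2}. Your additional care about the level of Dolbeault representatives and $\bar\partial$-closedness is a reasonable elaboration of what the paper leaves implicit.
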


\begin{example}\label{example k3}
Let $X$ be a K3 surface and fix a non-zero $\sigma\in H^0(X, \Omega^2_X)$. We may assume that $\int \sigma\sigmabar=1$. Then we have $c_2(X)=24\sigma\sigmabar$ and $\lambda_\sigma=\lambda(\sigma+\sigmabar)=1$.
 Consider $\gamma=\bw_2$. 
Then $$\RW_{\sigma}(\partial \bw_2)=\RW_{\sigma}(\Theta)=\frac{48}{\lambda_\sigma}\sigmabar=48\sigmabar.$$ On the other hand,
$$\Lambda_{\sigma/4}(\RW_\sigma(\bw_2))=\Lambda_{\sigma/4}(2c_2(X))=\Lambda_{\sigma/4}(48\sigma\sigmabar)=48\sigmabar.$$
Hence $\RW_{\sigma}(\partial \bw_2)=\Lambda_{\sigma/4}(\RW_\sigma(\bw_2)).$
\end{example}

\begin{remark}\label{remark nieper sign}
Here we make a historical remark on Theorem~\ref{RW diff}. 
In \cite[(85)]{nieper-phd}, Nieper-Wi{\ss}kirchen claimed a formula as in Theorem~\ref{RW diff} with a different sign. The reason is that his definition of $\delta$ differs from ours (Definition~\ref{def contraction}) by a sign.
In fact, Definition~\ref{def contraction} coincides with \cite[Definition~29]{nieper-phd}, but differs from \cite[Remark~30]{nieper-phd} by a sign. So to clarify which sign is the right choice, we decide to include a detailed proof of Proposition~\ref{RW diff local} and Theorem~\ref{RW diff} in this paper following our sign conventions. In any sense these two results should be credited to Nieper-Wi{\ss}kirchen who observed this interesting correspondence. 
\end{remark}

Combining Theorem~\ref{RW diff} with Theorem~\ref{wheeling thm}, we get the following consequence, which is crucial in the study of $\td^{1/2}(X)$.

\begin{cor}\label{lambda td}Let $X$ be a hyperk\"{a}hler manifold and fix a non-zero $\sigma\in H^0(X, \Omega^2_X)$.
Then for any integer $k\geq 1$,
$$\Lambda_{\sigma/4}(\td^{1/2}_{2k})=\frac{1}{\lambda_\sigma}\td^{1/2}_{2k-2}\wedge\sigmabar.$$
\end{cor}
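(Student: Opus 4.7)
The plan is to apply the Rozansky--Witten morphism $\RW_\sigma$ to both sides of the wheeling identity $\partial\Omega = \frac{\Theta}{48}\Omega$ from Theorem~\ref{wheeling thm}, and then read off the consequences degree by degree.

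For the left-hand side, I would first observe that $\Omega \in \hcB'$, since $\exp\bigl(\sum_{k\geq 1}b_{2k}\bw_{2k}\bigr)$ involves only products of wheels and no strut ever appears as a connected component. Theorem~\ref{RW diff} therefore applies and gives
\[
\RW_\sigma(\partial\Omega) = \Lambda_{\sigma/4}\bigl(\RW_\sigma(\Omega)\bigr) = \Lambda_{\sigma/4}\bigl(\td^{1/2}(X)\bigr),
\]
where the second equality is Proposition~\ref{prop RW classes}(4). For the right-hand side, I would use that $\RW_\sigma$ is an algebra homomorphism (Proposition~\ref{prop RW algebra}) together with Proposition~\ref{prop RW classes}(2),(4) to compute
\[
\RW_\sigma\bigl(\tfrac{\Theta}{48}\Omega\bigr) = \tfrac{1}{48}\,\RW_\sigma(\Theta)\wedge \RW_\sigma(\Omega) = \tfrac{1}{\lambda_\sigma}\,\sigmabar\wedge \td^{1/2}(X).
\]
Equating the two gives the ungraded identity $\Lambda_{\sigma/4}(\td^{1/2}(X)) = \tfrac{1}{\lambda_\sigma}\,\sigmabar \wedge\td^{1/2}(X)$.

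To finish, I would decompose $\td^{1/2}(X) = \sum_k \td^{1/2}_{2k}$ with $\td^{1/2}_{2k} \in H^{4k}(X)$, and note that $\Lambda_{\sigma/4}$ lowers total cohomological degree by $2$ while wedging with $\sigmabar$ raises it by $2$. Matching the $H^{4k-2}(X)$-component on each side produces exactly $\Lambda_{\sigma/4}(\td^{1/2}_{2k}) = \tfrac{1}{\lambda_\sigma}\,\td^{1/2}_{2k-2}\wedge\sigmabar$, which is the claimed formula.

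The only real difficulty in this circle of ideas lies upstream, in proving Theorem~\ref{RW diff}: one has to verify that the combinatorial edge-gluing operator $\partial$ on Jacobi diagrams corresponds under $\RW_\sigma$ to the contraction operator $\delta$ on $\bigwedge V^*\otimes A$, \emph{with the correct sign}, which is the content of Proposition~\ref{RW diff local} and requires careful orientation bookkeeping (cf. Remark~\ref{remark orientation condition}(3) and Remark~\ref{remark nieper sign}). Once Theorems~\ref{wheeling thm} and \ref{RW diff} are in hand, the corollary itself is a short formal manipulation.
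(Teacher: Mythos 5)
Your proposal is correct and follows essentially the same route as the paper: the paper's proof of Corollary~\ref{lambda td} likewise combines Theorem~\ref{wheeling thm}, Theorem~\ref{RW diff}, Proposition~\ref{prop RW algebra}, and Proposition~\ref{prop RW classes}, the only cosmetic difference being that it extracts the homogeneous component $\partial\Omega_{2k}=\frac{\Theta}{48}\Omega_{2k-2}$ of the wheeling identity \emph{before} applying $\RW_\sigma$, whereas you apply $\RW_\sigma$ first and separate degrees afterwards. Your degree bookkeeping (both sides landing in $H^{2k}(X,\Omega_X^{2k-2})\subset H^{4k-2}(X)$) is sound, so the two arguments are interchangeable.
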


\begin{proof}
By Proposition~\ref{prop RW classes}(4), $\td^{1/2}_{2k}=\RW_\sigma(\Omega_{2k}).$ By Theorem~\ref{wheeling thm}, we have $\partial \Omega_{2k}=\frac{\Theta}{48}\Omega_{2k-2}$ by taking the homogenous parts of degree $4k-2$.
So by Theorem~\ref{RW diff},
\begin{align*}
\Lambda_{\sigma/4}(\td^{1/2}_{2k})={}&\Lambda_{\sigma/4}(\RW_\sigma(\Omega_{2k}))=\RW_\sigma(\partial\Omega_{2k})\\
={}&\RW_\sigma\left(\frac{\Theta}{48}\Omega_{2k-2}\right)=\RW_\sigma\left(\frac{\Theta}{48}\right)\RW_\sigma(\Omega_{2k-2})\\
={}&\frac{1}{\lambda_\sigma}\td^{1/2}_{2k-2}\wedge\sigmabar.
\end{align*}
Here we applied Proposition~\ref{prop RW algebra} and Proposition~\ref{prop RW classes}(2).
\end{proof}

Corollary~\ref{lambda td} can be also written as the following with respect to K\"{a}hler forms.
\begin{cor}\label{lambda td2}Let $X$ be a hyperk\"{a}hler manifold with a K\"{a}hler form $\omega$. Then for any integer $k\geq 1$,
$$\Lambda_{\omega}(\td^{1/2}_{2k})=\frac{1}{\lambda(\omega)}\td^{1/2}_{2k-2}\wedge \omega.$$
\end{cor}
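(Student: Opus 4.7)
The plan is to deduce Corollary~\ref{lambda td2} directly from Corollary~\ref{lambda td} by rotating the complex structure inside a hyperk\"ahler triple. Two observations make this reduction possible. First, by Section~\ref{sec todd}, the classes $\td^{1/2}_{2k}(X)$ are topological invariants of $X$ and hence do not depend on the choice of complex structure. Second, by Section~\ref{sec IJK}, the given K\"ahler form $\omega$ always sits inside a hyperk\"ahler triple $(\omega_I=\omega,\omega_J,\omega_K)$ compatible with a common Riemannian metric $g$, whose Hodge star $*$ therefore simultaneously defines all three Lefschetz adjoints $\Lambda_{\omega_I},\Lambda_{\omega_J},\Lambda_{\omega_K}$.

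Concretely, I would pass to the complex structure $K$ and set $\sigma':=\omega+\sqrt{-1}\,\omega_J$, which is a non-zero holomorphic symplectic form on $(X,K)$ satisfying $\sigma'+\overline{\sigma'}=2\omega$. Unwinding $\Lambda_{\sigma'/4}=*^{-1}\circ L_{\overline{\sigma'}/4}\circ *$ exactly as in the proof of Lemma~\ref{sigma sigmabar commutes} yields
$$\Lambda_{\sigma'/4}=\tfrac{1}{4}\bigl(\Lambda_{\omega}-\sqrt{-1}\,\Lambda_{\omega_J}\bigr).$$
Feeding this decomposition into Corollary~\ref{lambda td} applied to $(X,K,\sigma')$ produces
$$\tfrac{1}{4}\bigl(\Lambda_{\omega}-\sqrt{-1}\,\Lambda_{\omega_J}\bigr)(\td^{1/2}_{2k})=\frac{1}{\lambda_{\sigma'}}\,\td^{1/2}_{2k-2}\wedge(\omega-\sqrt{-1}\,\omega_J).$$

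To conclude, I would take the real part of both sides; this is legitimate because $\td^{1/2}_{2k}$, $\td^{1/2}_{2k-2}$, $\omega$, and $\omega_J$ are all real classes and the operators involved preserve real cohomology. This gives $\tfrac{1}{4}\Lambda_{\omega}(\td^{1/2}_{2k})=\frac{1}{\lambda_{\sigma'}}\td^{1/2}_{2k-2}\wedge\omega$. Since $\lambda$ is a quadratic form by Proposition~\ref{prop lambda=q}, we have $\lambda_{\sigma'}=\lambda(\sigma'+\overline{\sigma'})=\lambda(2\omega)=4\lambda(\omega)$, and after cancelling the factor of $4$ the desired identity drops out. The only step that needs genuine verification is the decomposition of $\Lambda_{\sigma'/4}$ in the rotated setting, but this follows directly from the defining formula together with the $\bC$-linearity of the Lefschetz operator and the fact that the Hodge star is intrinsic to the hyperk\"ahler metric; hence no serious obstacle appears.
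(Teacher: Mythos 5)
Your proposal is correct and is essentially the paper's own argument: the paper applies Corollary~\ref{lambda td} with $\sigma=\omega_J+\sqrt{-1}\omega_K$, writes $\Lambda_{\sigma/4}=\frac{1}{4}(\Lambda_{\omega_J}-\sqrt{-1}\Lambda_{\omega_K})$, takes real parts to get the identity for $\omega_J$ with $\lambda(\omega_J)=\frac{1}{4}\lambda_\sigma$, and then invokes the hyperk\"ahler rotation to replace $\omega_J$ by $\omega$. You simply perform the rotation at the outset (working with $\sigma'=\omega+\sqrt{-1}\omega_J$ for the complex structure $K$) rather than at the end, using the same ingredients: reality of the Todd classes, their independence of the complex structure, and the quadratic homogeneity of $\lambda$.
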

\begin{proof}
Consider $\sigma=\omega_J+\sqrt{-1}\omega_K\in H^0(X, \Omega^2_X)$ as in Section~\ref{sec IJK}. Then Corollary~\ref{lambda td} says that
$$\frac{1}{4}(\Lambda_{\omega_J}-\sqrt{-1}\Lambda_{\omega_K})(\td^{1/2}_{2k})=\frac{1}{\lambda_\sigma}\td^{1/2}_{2k-2}\wedge(\omega_J-\sqrt{-1}\omega_K).$$
As $\td^{1/2}_{2k}$ and $\td^{1/2}_{2k-2}$ are real, we get
$$
\Lambda_{\omega_J}(\td^{1/2}_{2k})=\frac{4}{\lambda_\sigma}\td^{1/2}_{2k-2}\wedge \omega_J=\frac{1}{\lambda(\omega_J)}\td^{1/2}_{2k-2}\wedge \omega_J.
$$
Here we used the fact that $\lambda(\omega_J)=\lambda(\frac{1}{2}(\sigma+\sigmabar))=\frac{1}{4}\lambda_\sigma.$
Hence the conclusion also holds by replacing $\omega_J$ with $\omega$ according to the hyperk\"{a}hler structure.
\end{proof}

\begin{remark}\label{rem 321}
\begin{enumerate}
\item As $\td^{1/2}_{2k}$ and $\td^{1/2}_{2k-2}$ are real, by conjugation,
Corollary~\ref{lambda td} also implies that 
$$\Lambda_{\sigmabar/4}(\td^{1/2}_{2k})=\frac{1}{\lambda_\sigma}\td^{1/2}_{2k-2}\wedge\sigma,$$
where $\Lambda_{\sigmabar/4}$ can be defined similarly as $\Lambda_{\sigma/4}$ as in Section~\ref{sec sl2}.
So Corollaries~\ref{lambda td} and~\ref{lambda td2} show that $\td^{1/2}_{2k-2}$ and $\td^{1/2}_{2k}$ coincide after taking proper Lefschetz operations, as shown by the following diagram:
$$\xymatrix{& \td^{1/2}_{2k-2}\in H^{2k-2}(X, \Omega^{2k-2}_{X})\ar[d]_{\frac{1}{\lambda(\omega)}L_{\omega}} \ar[dr]^{\frac{1}{\lambda_\sigma}L_{\sigma}} \ar[dl]_{\frac{1}{\lambda_\sigma}L_{\sigmabar}} &\\
H^{2k}(X, \Omega^{2k-2}_{X}) & H^{2k-1}(X, \Omega^{2k-1}_{X}) & H^{2k-2}(X, \Omega^{2k}_{X}).\\
& \td^{1/2}_{2k}\in H^{2k}(X, \Omega^{2k}_{X}) \ar[u]^{\Lambda_{\omega}} \ar[ul]^{\Lambda_{\sigma/4}} \ar[ur]_{\Lambda_{\sigmabar/4}} &}
$$
\item Note that by the Hodge theory, $L_\omega$ is injective on $H^{2k}(X, \Omega^{2k}_{X})$ for $k\leq \frac{n}{2}$ and $\Lambda_\omega$ is injective on $H^{2k}(X, \Omega^{2k}_{X})$ for $k> \frac{n}{2}$. So by Corollary~\ref{lambda td2}, $\td^{1/2}_{2\rounddown{n/2}}$ carries all information of $\{\td^{1/2}_{2k}\mid 0\leq k\leq n\}$.
\end{enumerate}
\end{remark}
\section{A Lefschetz-type decomposition of $\td^{1/2}(X)$}\label{sec lef dec}
In this section, we apply Corollary~\ref{lambda td} to give a 
Lefschetz-type decomposition of $\td^{1/2}(X)$. 
Note that we can also apply Corollary~\ref{lambda td2} to give a 
similar Lefschetz-type decomposition of $\td^{1/2}(X)$ (see Remark~\ref{primitive omega}), but the former one is easier to handle in computations. 

 First, we find several natural primitive elements given by linear combinations of $\td^{1/2}_{2k}$.

\begin{definition}Let $X$ be a hyperk\"{a}hler manifold of dimension $2n$ and fix a non-zero $\sigma\in H^0(X, \Omega^2_X)$. Consider $\lambda_\sigma=\lambda(\sigma+\sigmabar)>0$ as in Definition~\ref{def lambda}.
For $0\leq k\leq n/2$, denote
$$
\tp_{2k}:=\sum_{i=0}^k\frac{(n-2k+1)!\td^{1/2}_{2i}\wedge(\sigma\sigmabar)^{k-i}}{(-\lambda_\sigma)^{k-i}(k-i)!(n-k-i+1)!}\in H^{4k}(X).$$
Note that it is of type $(2k,2k)$ with respect to the usual Hodge decomposition. In particular, $\tp_0=1.$
\end{definition}

The following proposition shows that $\tp_{2k}$ is indeed primitive in several senses.

\begin{prop}\label{prop primitive}Let $X$ be a hyperk\"{a}hler manifold of dimension $2n$ and fix a non-zero $\sigma\in H^0(X, \Omega^2_X)$.
Then for any $0\leq k\leq n/2$, $\tp_{2k}$ is both $\sigma$-primitive and $(\sigma+\sigmabar)$-primitive.
\end{prop}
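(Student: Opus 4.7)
The plan is to split the proposition into its two separate primitivity claims. The $\sigma$-primitivity $\Lambda_{\sigma/4}(\tp_{2k})=0$ will follow from a direct computation using Corollary~\ref{lambda td} together with the $\mathfrak{sl}_2$-commutation relations from Section~\ref{sec sl2}. The $(\sigma+\sigmabar)$-primitivity, interpreted as the Kähler--Lefschetz primitivity $(\sigma+\sigmabar)^{2n-4k+1}\wedge\tp_{2k}=0$ (since $\sigma+\sigmabar=2\omega_J$ is a Kähler form for the complex structure $J$), will then be deduced from $\sigma$- and $\sigmabar$-primitivity by a Hodge-bidegree argument.

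For the $\sigma$-primitivity, set $c_i=\frac{(n-2k+1)!}{(-\lambda_\sigma)^{k-i}(k-i)!(n-k-i+1)!}$ so that $\tp_{2k}=\sum_{i=0}^{k}c_i\,\td^{1/2}_{2i}\wedge(\sigma\sigmabar)^{k-i}$. Writing each summand as $c_iL_\sigma^{k-i}(\td^{1/2}_{2i}\wedge\sigmabar^{k-i})$, I will commute $\Lambda_{\sigma/4}$ past $L_\sigma^{k-i}$ using Lemma~\ref{AL=L} applied to a class of holomorphic degree $2i$, past $L_\sigmabar^{k-i}$ using Lemma~\ref{sigma sigmabar commutes}, and evaluate $\Lambda_{\sigma/4}(\td^{1/2}_{2i})$ via Corollary~\ref{lambda td}. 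This yields
\begin{align*}
\Lambda_{\sigma/4}\bigl(\td^{1/2}_{2i}\wedge(\sigma\sigmabar)^{k-i}\bigr)={}&\tfrac{1}{\lambda_\sigma}\td^{1/2}_{2i-2}\wedge\sigma^{k-i}\sigmabar^{k-i+1}\\
{}&+(k-i)(n+1-k-i)\,\td^{1/2}_{2i}\wedge\sigma^{k-i-1}\sigmabar^{k-i}.
\end{align*}
Collecting the coefficient of each monomial $\td^{1/2}_{2j}\wedge\sigma^{k-j-1}\sigmabar^{k-j}$ (to which only $i=j$ and $i=j+1$ contribute), the required cancellation reduces to the elementary factorial identity $c_{j+1}/\lambda_\sigma=-c_j(k-j)(n+1-k-j)$, which is a direct consequence of $c_{j+1}/c_j=(-\lambda_\sigma)(k-j)(n-k-j+1)$.

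For the $(\sigma+\sigmabar)$-primitivity, observe first that $\tp_{2k}$ is a real class, since $\td^{1/2}_{2i}$, $\sigma\sigmabar$, and $\lambda_\sigma$ are all real; complex-conjugating the $\sigma$-primitivity then yields $\Lambda_{\sigmabar/4}(\tp_{2k})=0$, equivalently $\sigmabar^{n-2k+1}\wedge\tp_{2k}=0$. To deduce $(\sigma+\sigmabar)^{2n-4k+1}\wedge\tp_{2k}=0$, expand by the binomial theorem: each term $\binom{2n-4k+1}{j}\sigma^j\wedge\sigmabar^{2n-4k+1-j}\wedge\tp_{2k}$ vanishes for pure bidegree reasons when $j>n-k$ or $j<n-3k+1$ (using that $\tp_{2k}$ has Hodge type $(2k,2k)$ and $\dim_{\bC}X=2n$); and in the remaining range $n-3k+1\leq j\leq n-k$, either $j\leq n-2k$ (so the $\sigmabar$-exponent is $\geq n-2k+1$ and $\sigmabar$-primitivity kills the term) or $j\geq n-2k+1$ (so the $\sigma$-exponent is $\geq n-2k+1$ and $\sigma$-primitivity kills the term). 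The main obstacle will be the factorial bookkeeping underlying the cancellation in the $\sigma$-primitivity computation — the signs from powers of $-\lambda_\sigma$, the factorial shifts, and the careful interplay of $L_\sigma$, $L_\sigmabar$, and $\Lambda_{\sigma/4}$ need to be tracked precisely — though each ingredient is elementary. The subsequent bidegree analysis for $(\sigma+\sigmabar)$-primitivity is then essentially formal.
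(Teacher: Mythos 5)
Your proposal is correct and follows essentially the same route as the paper: the $\sigma$-primitivity is verified by the same telescoping computation combining Corollary~\ref{lambda td}, Lemma~\ref{sigma sigmabar commutes}, and Lemma~\ref{AL=L} (your coefficient identity $c_{j+1}=-\lambda_\sigma(k-j)(n-k-j+1)c_j$ is exactly the cancellation the paper exploits), and the $(\sigma+\sigmabar)$-primitivity is deduced, as in the paper, from the reality of $\tp_{2k}$ and the vanishing of $\tp_{2k}\wedge\sigma^{n-2k+1}$ and $\tp_{2k}\wedge\sigmabar^{n-2k+1}$ via the binomial expansion of $(\sigma+\sigmabar)^{2n-4k+1}$. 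No gaps.
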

\begin{proof}
To show that $\tp_{2k}$ is $\sigma$-primitive, we need to check that $\Lambda_{\sigma/4}(\tp_{2k})=0.$ This can be checked directly by using Corollary~\ref{lambda td}. In fact, by Corollary~\ref{lambda td}, Lemmas~\ref{sigma sigmabar commutes} and 
\ref{AL=L},
\begin{align*}
{}&\Lambda_{\sigma/4}(\tp_{2k})/(n-2k+1)!\\
={}&\sum_{i=0}^k\frac{\Lambda_{\sigma/4}(\td^{1/2}_{2i}\wedge(\sigma\sigmabar)^{k-i})}{(-\lambda_\sigma)^{k-i}(k-i)!(n-k-i+1)!}\\
={}&\sum_{i=0}^k\frac{\Lambda_{\sigma/4}L_{\sigma}^{k-i}(\td^{1/2}_{2i})\wedge\sigmabar^{k-i}}{(-\lambda_\sigma)^{k-i}(k-i)!(n-k-i+1)!}\\
={}&\sum_{i=0}^k\frac{L_{\sigma}^{k-i}\Lambda_{\sigma/4}(\td^{1/2}_{2i})\wedge\sigmabar^{k-i}}{(-\lambda_\sigma)^{k-i}(k-i)!(n-k-i+1)!}\\
{}&-\sum_{i=0}^k\frac{(k-i)(k+i-n-1)L_{\sigma}^{k-i-1}(\td^{1/2}_{2i})\wedge\sigmabar^{k-i}}{(-\lambda_\sigma)^{k-i}(k-i)!(n-k-i+1)!}\\
={}&\sum_{i=1}^k\frac{L_{\sigma}^{k-i}(\td^{1/2}_{2i-2})\wedge\sigmabar^{k-i+1}}{\lambda_\sigma(-\lambda_\sigma)^{k-i}(k-i)!(n-k-i+1)!}\\
{}&-\sum_{i=0}^{k-1}\frac{-L_{\sigma}^{k-i-1}(\td^{1/2}_{2i})\wedge\sigmabar^{k-i}}{(-\lambda_\sigma)^{k-i}(k-i-1)!(n-k-i)!}\\
={}&0.
\end{align*}
Hence $\tp_{2k}$ is $\sigma$-primitive, in other words, $$\tp_{2k}\wedge\sigma^{n-2k+1}=L_{\sigma}^{n-2k+1}(\tp_{2k})=0.$$ As $\tp_{2k}$ is a real class, $\tp_{2k}\wedge\sigmabar^{n-2k+1}=0$ by complex conjugation. 
Hence it follows that $\tp_{2k}\wedge(\sigma+\sigmabar)^{2n-4k+1}=0$, that is, $\tp_{2k}$ is $(\sigma+\sigmabar)$-primitive. Here recall that $\sigma+\sigmabar$ is a K\"ahler form.
\end{proof}

\begin{remark}\label{primitive omega}
\begin{enumerate}
\item
Similar to Proposition~\ref{prop primitive}, by applying Corollary~\ref{lambda td2}, we can show that 
for any K\"{a}hler form $\omega$ and for $0\leq k\leq n/2$, 
$$
\sum_{i=0}^k\frac{(2n-4k+2)!(n-k-i+1)!\td^{1/2}_{2i}\wedge\omega^{2k-2i}}{(-\lambda(\omega))^{k-i}(k-i)!(n-2k+1)!(2n-2k-2i+2)!}\in H^{4k}(X)$$
is $\omega$-primitive.

\item Keep the notation in Section~\ref{sec IJK} and fix $0\leq k\leq n/2$. Then Proposition~\ref{prop primitive} shows that $\Lambda_{\sigma/4}(\tp_{2k})=0$ and $\Lambda_{\omega_J}(\tp_{2k})=0$. This implies that $\Lambda_{\omega_K}(\tp_{2k})=0$. In particular, $\tp_{2k}$ is both $\omega_J$-primitive and $\omega_K$-primitive. One may wonder whether $\tp_{2k}$ is $\omega_I$-primitive, but unfortunately this is not true if $n>1$ even for $k=1$.
Recall that by definition,
$$
\tp_{2}=\td^{1/2}_{2}-\frac{\sigma\sigmabar}{n\lambda_{\sigma}}.
$$
For computation we adopt notation in \cite{Veb-so5} or \cite[Section~24.2]{gross}, where the Lefschetz operators of $\omega_I, \omega_J, \omega_K$ are denoted by $L_I, L_J, L_K, \Lambda_I, \Lambda_J, \Lambda_K$.
By Corollary~\ref{lambda td2}, $\Lambda_{I}(\td^{1/2}_{2})=\frac{1}{\lambda(\omega_I)}\omega_I.$ On the other hand, 
\begin{align*}
\Lambda_I(\sigma\sigmabar)={}&\Lambda_I(\omega_J^2+\omega_K^2)\\
={}&L_J\Lambda_I(\omega_J)-K_{JI}(\omega_J)+L_K\Lambda_I(\omega_K)-K_{KI}(\omega_K)\\
={}&4\omega_I.
\end{align*}
Here we used the facts that $\Lambda_I(\omega_J)=\Lambda_I(\omega_K)=0$ and 
$K_{JI}(\omega_J)=K_{KI}(\omega_K)=-2\omega_I$ (see \cite[Proposition~24.2]{gross}). 
Note that $\lambda(\omega_I)=\lambda(\omega_J)=\frac{1}{4}\lambda_\sigma$. So we conclude that $
\Lambda_I(\tp_{2})=\frac{n-1}{n\lambda(\omega_I)}\omega_I\neq 0.
$
\end{enumerate}
\end{remark}

From the primitivity of $ \tp_{2k}$, we get the following.
\begin{cor}\label{cor pp=0}
Let $X$ be a hyperk\"{a}hler manifold of dimension $2n$ and fix a non-zero $\sigma\in H^0(X, \Omega^2_X)$.
Consider two integers $0\leq k, k'\leq n/2$ such that $k\neq k'$. Then
\begin{enumerate}
\item $\int \tp_{2k}\tp_{2k'}(\sigma\sigmabar)^{n-k-k'}=0;$ in particular, $\int \tp_{2k'}(\sigma\sigmabar)^{n-k'}=0$ if $k'\neq 0$.
\item 
$\int (\tp_{2k})^2(\sigma\sigmabar)^{n-2k}\geq 0;$ moreover, the equality holds if and only if $\tp_{2k}=0$.
\end{enumerate}
\end{cor}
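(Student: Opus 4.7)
The plan is to reduce both integrals to analogous ones involving the K\"ahler class $\sigma + \sigmabar$ (which equals $2\omega_J$ after the normalization of Section~\ref{sec IJK}), and then apply the Lefschetz/Hodge--Riemann machinery made available by Proposition~\ref{prop primitive}. Expanding $(\sigma + \sigmabar)^{2n - 2k - 2k'}$ binomially and tracking the $I$-Hodge type (where $\sigma$ is of type $(2,0)_I$, $\sigmabar$ of type $(0,2)_I$, and $\tp_{2k}, \tp_{2k'}$ are of pure type $(2k,2k)_I$ and $(2k',2k')_I$ respectively), only the balanced term $\sigma^{n-k-k'}\sigmabar^{n-k-k'}$ can pair nontrivially to the top bidegree $(2n,2n)_I$; this yields the identity
$$\int \tp_{2k}\tp_{2k'}(\sigma + \sigmabar)^{2n-2k-2k'} = \binom{2n-2k-2k'}{n-k-k'}\int \tp_{2k}\tp_{2k'}(\sigma\sigmabar)^{n-k-k'},$$
which converts each of (1) and (2) into an analogous statement against powers of $\sigma+\sigmabar$.

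For (1), assume $k<k'$ without loss of generality. By Proposition~\ref{prop primitive}, $\tp_{2k'}$ is $(\sigma+\sigmabar)$-primitive of degree $4k'$, so $\tp_{2k'}\wedge(\sigma+\sigmabar)^{2n-4k'+1}=0$ in cohomology; since $k<k'$ forces $2n-2k-2k' \geq 2n-4k'+1$, already $\tp_{2k'}\wedge(\sigma+\sigmabar)^{2n-2k-2k'}=0$, and the identity above yields the desired vanishing. Specializing to $k = 0$ recovers $\int\tp_{2k'}(\sigma\sigmabar)^{n-k'}=0$ for $k'\neq 0$.

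For (2), I would apply the Hodge--Riemann bilinear relations to the K\"ahler manifold $(X, J, \omega_J)$. The real class $\tp_{2k}$ is $\omega_J$-primitive of degree $4k$, and decomposes into $J$-Hodge components $\tp_{2k} = \sum_{p+q=4k}(\tp_{2k})^{p,q}$, each remaining $\omega_J$-primitive (as $L_{\omega_J}$ preserves $J$-bidegree). The crucial observation is that every nonzero component satisfies $p-q \equiv 0 \pmod{4}$: indeed, $\tp_{2k}$ is a rational combination of $\td^{1/2}_{2i}\wedge(\sigma\sigmabar)^{k-i}$, where $\td^{1/2}_{2i}$ is a rational characteristic class of pure $J$-type $(2i,2i)$, while $\sigma\sigmabar = \omega_J^2 + \omega_K^2$ splits under the $J$-decomposition into pieces of types $(2,2)$, $(4,0)$ and $(0,4)$ (since $\omega_K$ is of $J$-type $(2,0)+(0,2)$), all with $p-q$ divisible by $4$. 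For such components, the Hodge--Riemann sign is $(\sqrt{-1})^{p-q}(-1)^{4k(4k-1)/2} = 1$, so Hodge--Riemann positivity yields $\int (\tp_{2k})^{p,q}\wedge\overline{(\tp_{2k})^{p,q}}\wedge \omega_J^{2n-4k}\geq 0$ for each $(p,q)$, with equality iff that component vanishes. Summing and using reality $\overline{(\tp_{2k})^{p,q}} = (\tp_{2k})^{q,p}$ gives $\int\tp_{2k}^2(\sigma+\sigmabar)^{2n-4k}\geq 0$, with equality precisely when $\tp_{2k}=0$; the identity from the first paragraph then transfers this to the pairing against $(\sigma\sigmabar)^{n-2k}$.

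The main obstacle I anticipate is the $J$-Hodge type analysis: one must carefully verify that $\tp_{2k}$ has no components outside the ``balanced'' range $p-q \equiv 0 \pmod{4}$, since otherwise Hodge--Riemann would contribute with mixed signs and definite positivity would fail. Conceptually, this parity constraint reflects the $U(1)$-invariance of $\tp_{2k}$ under the rotation $\sigma \mapsto e^{\sqrt{-1}\theta}\sigma$ in the twistor family (an invariance shared by both $\td^{1/2}_{2i}$ and $\sigma\sigmabar$), but the cleanest verification is a direct bidegree computation from the explicit formula defining $\tp_{2k}$.
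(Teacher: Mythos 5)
Your proposal is correct and follows essentially the same route as the paper: part (1) from the primitivity established in Proposition~\ref{prop primitive}, and part (2) from the observation that the $(\sigma+\sigmabar)$-Hodge components of $\tp_{2k}$ all have $p-q\equiv 0\pmod 4$ (since $\td^{1/2}_{2i}$ is of type $(2i,2i)$ in every complex structure and $\sigma\sigmabar$ is of type $(4,0)+(2,2)+(0,4)$), so the Hodge--Riemann bilinear relation gives a definite sign. The only cosmetic difference is in (1), where the paper uses $\sigma$-primitivity directly ($\tp_{2k}\wedge\sigma^{n-k-k'}=0$ for $k>k'$) rather than passing through the binomial conversion identity; both are valid.
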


\begin{proof}
(1) We may assume that $k>k'$, then $n-k-k'\geq n-2k+1$. By Proposition~\ref{prop primitive}, $\tp_{2k}\wedge\sigma^{n-k-k'}=0$. Hence the conclusion is clear.

(2) By Proposition~\ref{prop primitive}, $\tp_{2k}$ is $(\sigma+\sigmabar)$-primitive.
Note that $\sigma\sigmabar$ is of type $(4,0)+(2,2)+(0,4)$ with respect to the K\"{a}hler form $(\sigma+\sigmabar)$, so
 components of $\tp_{2k}$ in the Hodge decomposition with respect to the K\"{a}hler form $(\sigma+\sigmabar)$ are of types $(2k-2m, 2k+2m)$ for $-k\leq m\leq k$. On the other hand, $\tp_{2k}$ is real, 
hence by the Hodge--Riemann bilinear relation (\cite[Proposition 3.3.15]{huy-book}),
$$\int (\tp_{2k})^2(\sigma+\sigmabar)^{2n-4k}\geq 0,$$
where the equality holds if and only if $\tp_{2k}=0$.
This is equivalent to the conclusion by degree reason.
\end{proof}
The following is the main theorem of this section, which gives a Lefschetz-type decomposition of $\td^{1/2}$ in terms of $\tp_{2i}$. One special and important phenomenon is that the coefficients in this decomposition are all positive.
\begin{thm}\label{td=sum tp}Let $X$ be a hyperk\"{a}hler manifold of dimension $2n$ and fix a non-zero $\sigma\in H^0(X, \Omega^2_X)$. Consider $\lambda_\sigma=\lambda(\sigma+\sigmabar)>0$ as in Definition~\ref{def lambda}.
\begin{enumerate}
\item
For $k\leq n/2$,
\begin{align*}
\td^{1/2}_{2k}{}&=\sum_{i=0}^k\frac{(n-2k+i)!}{\lambda_\sigma^i i!(n-2k+2i)!}\tp_{2k-2i}\wedge(\sigma\sigmabar)^i\\
{}&=\sum_{i=0}^k\frac{(n-k-i)!}{\lambda_\sigma^{k-i} (k-i)!(n-2i)!}\tp_{2i}\wedge(\sigma\sigmabar)^{k-i}.
\end{align*}
\item For $k>n/2$,
$$
\td^{1/2}_{2k}=\sum_{i=0}^{n-k} \frac{(n-k-i)!}{\lambda_\sigma^{k-i} (k-i)!(n-2i)!}\tp_{2i}\wedge(\sigma\sigmabar)^{k-i}.
$$
\end{enumerate}
In summary, for any $0\leq k\leq n$, 
$$
\td^{1/2}_{2k}=\sum_{i=0}^{\min\{k, n-k\}} \frac{(n-k-i)!}{\lambda_\sigma^{k-i} (k-i)!(n-2i)!}\tp_{2i}\wedge(\sigma\sigmabar)^{k-i}.
$$
\end{thm}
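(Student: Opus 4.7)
My plan is to prove both parts by induction on $k$, writing $c_{k,i} := (n-k-i)!/(\lambda_\sigma^{k-i}(k-i)!(n-2i)!)$ for the common coefficient; the base case $k=0$ reduces to $\td^{1/2}_0 = 1 = \tp_0$. The key difference between the regimes $k \leq n/2$ and $k > n/2$ is the tool used for the inductive step: the defining identity of $\tp_{2k}$ in the first regime, and injectivity of $\Lambda_{\sigma/4}$ in the second.

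For Part (1), I would solve the defining identity of $\tp_{2k}$ (whose coefficient of $\td^{1/2}_{2k}$ equals $1$) for $\td^{1/2}_{2k}$ in terms of $\tp_{2k}$ and the lower-order $\td^{1/2}_{2i}\wedge(\sigma\sigmabar)^{k-i}$, substitute the inductive expressions for each $\td^{1/2}_{2i}$ with $i < k$, and collect the coefficient of $\tp_{2j}\wedge(\sigma\sigmabar)^{k-j}$. After setting $q = k-j$ and $p = n-2j$, the identity to be checked reduces to
$$\sum_{m=0}^{q}\frac{(-1)^{m}(p-m)!}{(q-m)!(p-q-m+1)!m!}=0 \qquad (q \geq 1).$$
This is an instance of the standard fact that $\sum_{m=0}^{q}(-1)^{m}\binom{q}{m}f(m)=0$ for any polynomial $f$ of degree less than $q$, applied to $f(m)=(p-m)!/(p-q-m+1)!$, which is a product of $q-1$ consecutive integers in $m$.

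For Part (2), with $k > n/2$, the weight $2k-n > 0$ combined with $\mathfrak{sl}_2$ representation theory implies that $\Lambda_{\sigma/4}$ is injective on $H^{2k}(X,\Omega_X^{2k})$. Writing $F_k$ for the proposed right-hand side, it therefore suffices to verify $\Lambda_{\sigma/4}(F_k) = \Lambda_{\sigma/4}(\td^{1/2}_{2k}) = \frac{1}{\lambda_\sigma}\td^{1/2}_{2k-2}\wedge\sigmabar$ (Corollary~\ref{lambda td}). Using $\sigma$-primitivity of $\tp_{2i}$ (Proposition~\ref{prop primitive}) together with Lemma~\ref{sigma sigmabar commutes} to move $\sigmabar^{k-i}$ past $\Lambda_{\sigma/4}$, and Lemma~\ref{AL=L} which gives $\Lambda_{\sigma/4}L_\sigma^{k-i}\tp_{2i} = (k-i)(n-k-i+1)L_\sigma^{k-i-1}\tp_{2i}$, a direct computation yields
$$\Lambda_{\sigma/4}(F_k) = \sum_{i=0}^{n-k}\frac{(n-k-i+1)!}{\lambda_\sigma^{k-i}(k-i-1)!(n-2i)!}\tp_{2i}\wedge(\sigma\sigmabar)^{k-i-1}\wedge\sigmabar,$$
whose coefficients match term-by-term those produced by the inductive formula applied to $\frac{1}{\lambda_\sigma}\td^{1/2}_{2k-2}\wedge\sigmabar$.

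The delicate point is that the inductive expression for $\td^{1/2}_{2k-2}$ may contain an extra term at $i = n-k+1$ absent from $\Lambda_{\sigma/4}(F_k)$. This leftover is a scalar multiple of $\tp_{2(n-k+1)}\wedge\sigma^{2k-n-2}\wedge\sigmabar^{2k-n-1}$, which vanishes: since $n-k+1 \leq n/2$, $\sigma$-primitivity of $\tp_{2(n-k+1)}$ gives $\tp_{2(n-k+1)}\wedge\sigma^{2k-n-1}=0$, and since $\tp_{2(n-k+1)}$ is real, conjugation yields $\tp_{2(n-k+1)}\wedge\sigmabar^{2k-n-1}=0$, killing the term. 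I expect the main obstacles to be verifying the combinatorial identity in Part (1) and the careful bookkeeping of index ranges near the boundary $k = n/2$ in Part (2); the remaining steps are essentially mechanical given Corollary~\ref{lambda td}, Proposition~\ref{prop primitive}, and the lemmas of Section~\ref{sec sl2}.
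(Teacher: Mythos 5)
Your proposal is correct, and although it shares the paper's overall architecture --- a combinatorial inversion for $k\le n/2$ followed by descent via the lowering operator for $k>n/2$ --- both halves are implemented genuinely differently. For (1), the paper substitutes the definition of $\tp_{2k-2i}$ into the claimed sum and collapses the double sum via the generating-function identity of Lemma~\ref{comb identity1}; you instead invert the triangular system by induction on $k$, which leads to the identity $\sum_{m=0}^{q}(-1)^m\binom{q}{m}\tfrac{(p-m)!}{(p-q-m+1)!}=0$ for $q\ge 1$, correctly disposed of as the vanishing $q$-th finite difference of a polynomial of degree $q-1$ (after the substitution $m\mapsto q-m$ this is exactly what your inductive step requires, and the factorial arguments stay non-negative throughout the range $k\le n/2$). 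For (2), the paper applies $\Lambda_{\sigma/4}^{2k-n}$ and uses that this is an isomorphism onto the opposite weight space, reducing in one stroke to part (1) applied to $\td^{1/2}_{2n-2k}$; you apply $\Lambda_{\sigma/4}$ once, invoke injectivity on positive-weight spaces, and induct. The price of your route is the boundary term at $i=n-k+1$ in the expansion of $\td^{1/2}_{2k-2}$ when $k\ge n/2+1$, which you correctly kill using $\tp_{2(n-k+1)}\wedge\sigmabar^{2k-n-1}=0$, the conjugate of $\sigma$-primitivity; the paper's $(2k-n)$-fold descent avoids this bookkeeping entirely. Both routes rest on the same inputs, namely Corollary~\ref{lambda td}, Proposition~\ref{prop primitive}, and Lemmas~\ref{sigma sigmabar commutes} and~\ref{AL=L}, so the trade-off is purely between your simpler combinatorics in (1) and the paper's cleaner index management in (2).
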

\begin{proof}(1) This can be checked directly as the following.
\begin{align*}
{}&\sum_{i=0}^k\frac{(n-2k+i)!}{\lambda_\sigma^i i!(n-2k+2i)!}\tp_{2k-2i}\wedge(\sigma\sigmabar)^i\\
={}&\sum_{i=0}^k\frac{(n-2k+i)!}{\lambda_\sigma^i i!(n-2k+2i)!}
\left(\sum_{j=0}^{k-i}\frac{(n-2k+2i+1)!\td^{1/2}_{2j}\wedge(\sigma\sigmabar)^{k-i-j}\wedge(\sigma\sigmabar)^i}{(-\lambda_\sigma)^{k-i-j}(k-i-j)!(n-k+i-j+1)!}\right)\\
={}&\sum_{i=0}^k\sum_{j=0}^{k-i}
\frac{(-1)^{k-i-j}(n-2k+2i+1)(n-2k+i)!\td^{1/2}_{2j}\wedge(\sigma\sigmabar)^{k-j}}{\lambda_\sigma^{k-j} i!(k-i-j)!(n-k+i-j+1)!}\\
={}&\sum_{j=0}^k\frac{(-1)^{k-j}\td^{1/2}_{2j}\wedge(\sigma\sigmabar)^{k-j}}{\lambda_\sigma^{k-j}}\left(\sum_{i=0}^{k-j}
\frac{(-1)^{i}(n-2k+2i+1)(n-2k+i)!}{ i!(k-i-j)!(n-k+i-j+1)!}\right)\\
={}&\td^{1/2}_{2k}.
\end{align*}
Here in the last step, we applied Lemma~\ref{comb identity1} in the appendix.

(2) Note that by Corollary~\ref{lambda td} and Lemma~\ref{sigma sigmabar commutes},
$$
\Lambda_{\sigma/4}^{2k-n}(\td^{1/2}_{2k})=\frac{1}{\lambda_\sigma^{2k-n}}\td^{1/2}_{2n-2k}\wedge \sigmabar^{2k-n}
$$
and
\begin{align*}
{}&\Lambda_{\sigma/4}^{2k-n}\left(\sum_{i=0}^{n-k} \frac{(n-k-i)!}{\lambda_\sigma^{k-i} (k-i)!(n-2i)!}\tp_{2i}\wedge(\sigma\sigmabar)^{k-i}\right)\\
={}&\sum_{i=0}^{n-k} \frac{(n-k-i)!}{\lambda_\sigma^{k-i} (k-i)!(n-2i)!}\Lambda_{\sigma/4}^{2k-n}L_\sigma^{k-i}(\tp_{2i})\wedge\sigmabar^{k-i}\\
={}&\sum_{i=0}^{n-k} \frac{(n-k-i)!}{\lambda_\sigma^{k-i} (k-i)!(n-2i)!}\frac{(k-i)!^2}{(n-k-i)!^2}L_\sigma^{n-k-i}(\tp_{2i})\wedge\sigmabar^{k-i}\\
={}&\sum_{i=0}^{n-k} \frac{(k-i)!}{\lambda_\sigma^{k-i} (n-k-i)!(n-2i)!}\tp_{2i}\wedge \sigma^{n-k-i}\sigmabar^{k-i}\\
={}&\frac{1}{\lambda_\sigma^{2k-n}}\td^{1/2}_{2n-2k}\wedge \sigmabar^{2k-n}.
\end{align*}
Here for the second equality, we applied Lemma~\ref{AL=L} repeatedly $(2k-n)$ times; for the last one, we applied (1).
So the conclusion follows immediately as $\Lambda_{\sigma/4}^{2k-n}: H^{2k}(X,\Omega_X^{2k})\to H^{2k}(X, \Omega_X^{2n-2k})$ is an isomorphism by standard representation theory of $\mathfrak{sl}_2$.
\end{proof}

As a direct application of this decomposition, we recover an important result of Nieper-Wi{\ss}kirchen \cite{nieper-jag} generalizing Hitchin and Sawon \cite{hitchinsawon}. It was used by Huybrechts \cite{huybrechts1} to prove finiteness results for hyperk\"{a}hler manifolds.

\begin{cor}[{\cite[(5.17)]{nieper-jag}}]\label{cor cover nieper}
Let $X$ be a hyperk\"{a}hler manifold. Then
for any $\alpha\in H^2(X)$, 
$$\int\td^{1/2}(X)\exp(\alpha)=(1+\lambda(\alpha))^n\int\td^{1/2}(X).$$
\end{cor}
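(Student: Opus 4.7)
My approach is to reduce the statement to the one-parameter family $\alpha=t(\sigma+\bar\sigma)$ by a polynomial interpolation argument, and then to evaluate the special case directly from the Lefschetz decomposition of Theorem~\ref{td=sum tp}. Since each $\td^{1/2}_{2k}$ is polynomial in the rational Chern classes and therefore of type $(2k,2k)$ on all small deformations of $X$, Fujiki's theorem (Theorem~\ref{fujiki result}) gives
$$\int\td^{1/2}(X)\exp(\alpha)=\sum_{k=0}^{n}\frac{{\bf C}(\td^{1/2}_{2k})}{(2n-2k)!}q_X(\alpha)^{n-k},$$
a polynomial of degree $n$ in $q_X(\alpha)$. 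By Proposition~\ref{prop lambda=q}, $\lambda(\alpha)$ is a positive constant multiple of $q_X(\alpha)$, so the right-hand side $(1+\lambda(\alpha))^n\int\td^{1/2}(X)$ is also a polynomial of degree $n$ in $q_X(\alpha)$. Thus it suffices to check the identity along $\alpha=t(\sigma+\bar\sigma)$ (replacing $\sigma$ by $t\sigma$ if necessary), since $q_X$ then takes infinitely many values.

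Next I compute $\int\td^{1/2}_{2k}(\sigma+\bar\sigma)^{2n-2k}$ for a fixed $\sigma$. Expanding $(\sigma+\bar\sigma)^{2n-2k}$ by the binomial theorem, Hodge-type considerations show that only the middle term $\binom{2n-2k}{n-k}\sigma^{n-k}\bar\sigma^{n-k}$ contributes to the integral against any $(2j,2j)$-class. Substituting Theorem~\ref{td=sum tp},
$$\int\td^{1/2}_{2k}(\sigma+\bar\sigma)^{2n-2k}=\binom{2n-2k}{n-k}\sum_{i=0}^{\min\{k,n-k\}}\frac{(n-k-i)!}{\lambda_\sigma^{k-i}(k-i)!(n-2i)!}\int\tp_{2i}(\sigma\bar\sigma)^{n-i}.$$
The crucial vanishing comes from Corollary~\ref{cor pp=0}(1): $\int\tp_{2i}(\sigma\bar\sigma)^{n-i}=0$ for every $i>0$, so only the $i=0$ term survives. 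Since $\tp_0=1$, the computation reduces to evaluating $\int(\sigma\bar\sigma)^n$.

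Finally, the $k=n$ case of Theorem~\ref{td=sum tp} yields $\td^{1/2}_{2n}=(\sigma\bar\sigma)^n/(\lambda_\sigma^n(n!)^2)$, hence $\int(\sigma\bar\sigma)^n=\lambda_\sigma^n(n!)^2\int\td^{1/2}(X)$. Substituting and simplifying the surviving $i=0$ contribution gives
$$\int\td^{1/2}_{2k}(\sigma+\bar\sigma)^{2n-2k}=\frac{(2n-2k)!\,n!\,\lambda_\sigma^{n-k}}{k!\,(n-k)!}\int\td^{1/2}(X).$$
Dividing by $(2n-2k)!$ and summing over $k$ collapses to the binomial identity
$$\int\td^{1/2}(X)\exp(\sigma+\bar\sigma)=\sum_{k=0}^n\binom{n}{k}\lambda_\sigma^{n-k}\int\td^{1/2}(X)=(1+\lambda_\sigma)^n\int\td^{1/2}(X).$$
Applying the same computation with $t\sigma$ in place of $\sigma$ (so that $\lambda_{t\sigma}=t^2\lambda_\sigma$) gives the identity on an infinite family of values of $q_X$, and polynomial interpolation extends it to all $\alpha\in H^2(X)$. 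I do not anticipate any serious obstacle here: the primitivity vanishing from Corollary~\ref{cor pp=0}(1) collapses the decomposition of $\td^{1/2}$ to its leading piece, after which only a one-line binomial identity remains.
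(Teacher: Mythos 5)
Your proof is correct and takes essentially the same route as the paper's: reduce to $\alpha=\sigma+\sigmabar$ via Fujiki's theorem, expand $\td^{1/2}_{2k}$ using the decomposition of Theorem~\ref{td=sum tp}, kill every term except the $\tp_0$ piece by Corollary~\ref{cor pp=0}(1), identify $\int(\sigma\sigmabar)^n$ with $\lambda_\sigma^n(n!)^2\int\td^{1/2}(X)$, and finish with the binomial theorem. The only difference is that you make explicit the polynomial-interpolation step in $t$ (and the check that $\td^{1/2}_{2k}$ stays of type $(2k,2k)$ under deformation) which the paper leaves implicit in its one-line appeal to Theorem~\ref{fujiki result}.
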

\begin{proof}
By Theorem~\ref{fujiki result}, it suffices to prove the result for $\alpha=\sigma+\sigmabar$.
By Theorem~\ref{td=sum tp}(1) and Corollary~\ref{cor pp=0}, for any $0\leq k\leq n$,
$$
\int \td^{1/2}_{2k}(\sigma\sigmabar)^{n-k}=\int \frac{(n-k)!}{\lambda_\sigma^{k} k!n!}(\sigma\sigmabar)^{n}
$$
as the integrals on components other than $\tp_0$ vanish. In particular,
\begin{align}\label{td1/2=int}
\int \td^{1/2}(X)=\int \td^{1/2}_{2n}=\frac{1}{\lambda_\sigma^n(n!)^2}\int (\sigma\sigmabar)^{n}.
\end{align}
Hence
\begin{align*}
\int \td^{1/2}_{2k}(\sigma\sigmabar)^{n-k}={}&\frac{(n-k)!}{\lambda_\sigma^{k} k!n!}\cdot \lambda_\sigma^n(n!)^2\int \td^{1/2}(X)\\
={}&\frac{(n-k)!n!}{k!}\cdot \lambda_\sigma^{n-k}\int \td^{1/2}(X).
\end{align*}
In other words,
$$
\int \td^{1/2}_{2k}\exp(\sigma+\sigmabar)=\binom{n}{k}\lambda_\sigma^{n-k}\int \td^{1/2}(X).$$
This concludes the proof.
\end{proof}
\section{Positivity of Riemann--Roch polynomials and applications}\label{final sec}
In this section, we study the positivity of the Riemann--Roch polynomials and its applications.
\subsection{Positivity of Riemann--Roch polynomials}
The following theorem is a more precise version of Theorem~\ref{main thm1 RR>0}.
\begin{thm}\label{thm td>0}
Let $X$ be a hyperk\"{a}hler manifold of dimension $2n$ and fix a non-zero $\sigma\in H^0(X, \Omega^2_X)$. Consider $\lambda_\sigma=\lambda(\sigma+\sigmabar)>0$ as in Definition~\ref{def lambda}.
Then for any $0\leq m\leq n$,
$$
\int \td_{2m} \exp(\sigma+\sigmabar)\geq \binom{2n-m+1}{m}\lambda_\sigma^{n-m}\int\td^{1/2}(X).
$$
Moreover, the inequality is strict for $m>1$ and $n>1$.
\end{thm}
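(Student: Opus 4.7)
The plan is to exploit the factorization $\td(X)=\td^{1/2}(X)^{2}$, which gives
$$\td_{2m}=\sum_{i+j=m}\td^{1/2}_{2i}\wedge\td^{1/2}_{2j}.$$
Since $\td_{2m}$ is of Hodge type $(2m,2m)$ while $\sigma,\sigmabar$ are of types $(2,0)$ and $(0,2)$ respectively, only the balanced term $\binom{2n-2m}{n-m}\sigma^{n-m}\sigmabar^{n-m}$ in the binomial expansion of $(\sigma+\sigmabar)^{2n-2m}$ pairs non-trivially with $\td_{2m}$, so
$$\int \td_{2m}\exp(\sigma+\sigmabar)=\frac{1}{((n-m)!)^{2}}\int\td_{2m}(\sigma\sigmabar)^{n-m}.$$
This reduces the problem to a top-degree integral against $(\sigma\sigmabar)^{n-m}$.

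Next, I expand each factor using the Lefschetz-type decomposition (Theorem~\ref{td=sum tp}):
$$\td^{1/2}_{2i}\wedge\td^{1/2}_{2j}=\sum_{a,b\ge 0}c_{i,a}c_{j,b}\cdot \tp_{2a}\tp_{2b}(\sigma\sigmabar)^{i+j-a-b},\qquad c_{i,a}:=\frac{(n-i-a)!}{\lambda_\sigma^{i-a}(i-a)!(n-2a)!}>0.$$
Integrating against $(\sigma\sigmabar)^{n-m}$ and using the orthogonality in Corollary~\ref{cor pp=0}(1) to kill every cross term with $a\neq b$ yields
$$\int\td_{2m}(\sigma\sigmabar)^{n-m}=\sum_{a\ge 0}\left(\sum_{i+j=m}c_{i,a}c_{j,a}\right)\int\tp_{2a}^{2}(\sigma\sigmabar)^{n-2a},$$
each summand being non-negative by Corollary~\ref{cor pp=0}(2). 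The lower bound of the theorem arises from the $a=0$ term alone: since $\tp_{0}=1$ and $\int(\sigma\sigmabar)^{n}=\lambda_\sigma^{n}(n!)^{2}\int\td^{1/2}(X)$ by~\eqref{td1/2=int}, this term equals
$$\lambda_\sigma^{n-m}\int\td^{1/2}(X)\cdot \sum_{i=0}^{m}\binom{n-i}{m-i}\binom{n-m+i}{i}.$$
The Chu--Vandermonde convolution $\sum_{i=0}^{m}\binom{p+i}{i}\binom{q+m-i}{m-i}=\binom{p+q+m+1}{m}$ with $p=q=n-m$ simplifies the combinatorial sum to $\binom{2n-m+1}{m}$, yielding the claimed inequality.

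The main obstacle is the strict inequality for $m>1$ and $n>1$. In the expansion above the coefficient of $\int\tp_{2}^{2}(\sigma\sigmabar)^{n-2}$ is $\sum_{i+j=m,\,1\le i,j\le n-1}c_{i,1}c_{j,1}$, which is strictly positive (e.g.\ from the pair $i=1$, $j=m-1$), so strictness reduces to proving $\tp_{2}\neq 0$ whenever $n\ge 2$. I argue by contradiction: if $\tp_{2}=0$ then $c_{2}(X)=24\sigma\sigmabar/(n\lambda_\sigma)$, which in the hyperk\"ahler notation $\sigma=\omega_{J}+\sqrt{-1}\omega_{K}$ (so $\sigma\sigmabar=\omega_{J}^{2}+\omega_{K}^{2}$ and $\lambda_\sigma=4\lambda(\omega_{J})$) becomes $c_{2}=6(\omega_{J}^{2}+\omega_{K}^{2})/(n\lambda(\omega_{J}))$. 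I then evaluate $\int c_{2}\,\omega_{I}^{2n-2-k}\omega_{J}^{k}$ for $k=0$ and $k=2$ in two independent ways: first, by substituting this formula for $c_{2}$ and using the hyperk\"ahler monomial evaluations $\int\omega_{I}^{2\alpha}\omega_{J}^{2\beta}\omega_{K}^{2\gamma}=c_{X}\binom{n}{\alpha,\beta,\gamma}/\binom{2n}{2\alpha,2\beta,2\gamma}$ obtained by expanding Fujiki's identity $\int(a\omega_{I}+b\omega_{J}+c\omega_{K})^{2n}=c_{X}(a^{2}+b^{2}+c^{2})^{n}$; second, by applying Theorem~\ref{fujiki result} to $c_{2}$, which gives $\int c_{2}(\omega_{I}+t\omega_{J})^{2n-2}={\bf C}(c_{2})q_{X}(\omega_{I})^{n-1}(1+t^{2})^{n-1}$ with ${\bf C}(c_{2})=12c_{X}q_{X}(\omega_{I})/[(2n-1)\lambda(\omega_{I})]$ from Proposition~\ref{prop lambda=q}. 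Reading off the $t^{0}$ and $t^{2}$ coefficients and equating the two expressions forces $q_{X}(\omega_{I})^{n}=1/n$ and $q_{X}(\omega_{I})^{n}=2/n$ simultaneously, an impossibility for $n\ge 2$. Thus $\tp_{2}\neq 0$, completing the strict inequality.
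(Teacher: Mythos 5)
Your proof of the main inequality is correct and follows essentially the same route as the paper: reduce to $\int\td_{2m}(\sigma\sigmabar)^{n-m}$ by type considerations, expand $\td_{2m}=\sum_{i+j=m}\td^{1/2}_{2i}\td^{1/2}_{2j}$ via Theorem~\ref{td=sum tp}, kill the cross terms by Corollary~\ref{cor pp=0}(1), drop the non-negative terms with $a>0$, and evaluate the $a=0$ term by the Chu--Vandermonde identity (which is exactly Lemma~\ref{comb identity2} of the paper). Where you genuinely diverge is the strictness step. Both arguments reduce strictness to $\tp_2\neq 0$ (equivalently, $c_2(X)$ not proportional to $\sigma\sigmabar$), but the paper disposes of this in one line by invoking that $\td^{1/2}_2=\frac{1}{24}c_2(X)$ is a topological invariant, independent of the complex structure, whereas $\sigma\sigmabar$ is not; your argument instead stays inside a single hyperk\"ahler structure and derives a contradiction from the Fujiki relations for the triple $\omega_I,\omega_J,\omega_K$. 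Your route is more self-contained and makes explicit what the paper leaves implicit; indeed the $t^0$ coefficient alone already suffices, since the hypothesis $c_2=\frac{6}{n\lambda(\omega_J)}(\omega_J^2+\omega_K^2)$ together with $\int\omega_J^2\omega_I^{2n-2}=\frac{c_Xq_X(\omega_I)^n}{2n-1}$ gives $\int c_2\,\omega_I^{2n-2}=\frac{1}{n}\int c_2\,\omega_J^{2n-2}$, contradicting $\int c_2\,\omega_I^{2n-2}={\bf C}(c_2)q_X(\omega_I)^{n-1}={\bf C}(c_2)q_X(\omega_J)^{n-1}=\int c_2\,\omega_J^{2n-2}$ when $n>1$. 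The only blemish is that the concluding equations you state ($q_X(\omega_I)^n=1/n$ versus $2/n$) are not what the comparison actually yields --- the discrepancy is a clean factor of $n$ between the $\omega_I$- and $\omega_J$-evaluations as above --- but this is a bookkeeping slip, not a gap: the method is sound and the contradiction is genuine for $n\geq 2$.
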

\begin{proof}
Note that by definition, $\td_{2m}=\sum_{k=0}^m\td^{1/2}_{2k}\td^{1/2}_{2m-2k}$. Hence by Theorem~\ref{td=sum tp} and Corollary~\ref{cor pp=0},
\begin{align*}
{}&\int\td_{2m}(\sigma\sigmabar)^{n-m}\\
={}&\int\sum_{k=0}^m\left(\sum_{i=0}^{\min\{k, n-k\}} \frac{(n-k-i)!}{\lambda_\sigma^{k-i} (k-i)!(n-2i)!}\tp_{2i}(\sigma\sigmabar)^{k-i}\right)\\
{}&\cdot \left(\sum_{i=0}^{\min\{m-k, n-m+k\}} \frac{(n-m+k-i)!}{\lambda_\sigma^{m-k-i} (m-k-i)!(n-2i)!}\tp_{2i}(\sigma\sigmabar)^{m-k-i}\right)(\sigma\sigmabar)^{n-m}\\
={}&\sum_{k=0}^m\sum_{i=0}^{\min\{k, m-k\}} \frac{(n-k-i)!(n-m+k-i)!}{\lambda_\sigma^{m-2i} (k-i)!(m-k-i)!(n-2i)!^2}\int(\tp_{2i})^2(\sigma\sigmabar)^{n-2i}\\
={}&\sum_{i=0}^{\rounddown{m/2}}\sum_{k=i}^{m-i} \frac{(n-k-i)!(n-m+k-i)!}{\lambda_\sigma^{m-2i} (k-i)!(m-k-i)!(n-2i)!^2}\int(\tp_{2i})^2(\sigma\sigmabar)^{n-2i}\\
={}&\sum_{i=0}^{\rounddown{m/2}} \frac{(n-m)!^2}{\lambda_\sigma^{m-2i} (n-2i)!^2}\binom{2n-2i-m+1}{m-2i}\int(\tp_{2i})^2(\sigma\sigmabar)^{n-2i}\\
\geq{}& \frac{(n-m)!^2}{\lambda_\sigma^m n!^2}\binom{2n-m+1}{m}\int (\sigma\sigmabar)^{n}\\
={}&(n-m)!^2\binom{2n-m+1}{m}\lambda_\sigma^{n-m}\int\td^{1/2}(X).
\end{align*}
Here in the last three steps we applied Lemma~\ref{comb identity2}, Corollary~\ref{cor pp=0}(2), and Equality~\eqref{td1/2=int}.
This proves the desired inequality.

If the equality holds for some $m>1$, then by Corollary~\ref{cor pp=0}(2), $\tp_2=0$.
Then Theorem~\ref{td=sum tp} implies that $\td^{1/2}_2$ is proportional to $\sigma\sigmabar$, which is absurd if $n>1$, as $\td^{1/2}_2$ does not depend on the complex structure of $X$.

Finally we remark that, from the above expression, if one could get a better estimate for $\int(\tp_{2i})^2(\sigma\sigmabar)^{n-2i}$
for $i>0$, then we can get a better estimate for $\int\td_{2m}(\sigma\sigmabar)^{n-m}$.
\end{proof}
\begin{cor}\label{cor P>0}
Let $X$ be a hyperk\"{a}hler manifold of dimension $2n>2$. 
Then for any $\alpha\in H^2(X)$, 
$$
P_{\lambda(\alpha)}:=\int \td(X) \exp(\alpha)-\sum_{m=0}^n\binom{2n-m+1}{m}\lambda(\alpha)^{n-m}\int\td^{1/2}(X)
$$
is a polynomial in terms of $\lambda(\alpha)$ of degree $n-2$ with positive coefficients.
\end{cor}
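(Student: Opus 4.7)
The plan is to write each side of the defining identity for $P_{\lambda(\alpha)}$ as an explicit polynomial in $\lambda(\alpha)$ and then read off the coefficients using Theorem~\ref{thm td>0}. First I would expand
$$\int \td(X)\exp(\alpha) = \sum_{m=0}^n \frac{1}{(2n-2m)!}\int\td_{2m}(X)\,\alpha^{2n-2m}.$$
Since rational Chern (and hence Todd) classes of a hyperk\"ahler manifold are topological invariants, each $\td_{2m}(X)$ stays of Hodge type $(2m,2m)$ on all small deformations of $X$. Fujiki's formula (Theorem~\ref{fujiki result}) then gives
$$\int\td_{2m}(X)\,\alpha^{2n-2m} = {\bf C}(\td_{2m}(X))\cdot q_X(\alpha)^{n-m},$$
and, via Proposition~\ref{prop lambda=q}, this is a constant multiple of $\lambda(\alpha)^{n-m}$. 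Hence $\int\td(X)\exp(\alpha)$, and therefore $P_{\lambda(\alpha)}$, is automatically a polynomial in $\lambda(\alpha)$ of degree at most $n$.

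Next I would determine each coefficient by specializing to $\alpha=\sigma+\sigmabar$, where the constants ${\bf C}(\td_{2m}(X))$ are pinned down by Theorem~\ref{thm td>0}. Type considerations reduce $\int\td_{2m}(X)\exp(\sigma+\sigmabar) = \frac{1}{(2n-2m)!}\int\td_{2m}(X)(\sigma+\sigmabar)^{2n-2m}$ to $\frac{1}{(n-m)!^2}\int\td_{2m}(X)(\sigma\sigmabar)^{n-m}$, and Theorem~\ref{thm td>0} gives
$$\int\td_{2m}(X)\exp(\sigma+\sigmabar)\;\geq\;\binom{2n-m+1}{m}\lambda_\sigma^{n-m}\int\td^{1/2}(X),$$
with strict inequality for $m\geq 2$ (since $n>1$). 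Combined with proportionality of $q_X$ and $\lambda$, this shows that every coefficient of $P_{\lambda(\alpha)}$ is nonnegative, and strictly positive for each index corresponding to $m\geq 2$.

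Finally, to pin down that the degree is exactly $n-2$, I would verify that the two leading coefficients (in $\lambda(\alpha)^n$ and $\lambda(\alpha)^{n-1}$) vanish identically. For $m=0$, type considerations give $(\sigma+\sigmabar)^{2n}=\binom{2n}{n}(\sigma\sigmabar)^n$, and Equation~\eqref{td1/2=int} yields exact equality with $\lambda_\sigma^n\int\td^{1/2}(X)$. For $m=1$, inspecting the estimate in the proof of Theorem~\ref{thm td>0} shows the inner sum has only the $i=0$ term, which attains the stated lower bound with equality. Together with the strict inequalities for $m\geq 2$, this yields the desired form: a polynomial in $\lambda(\alpha)$ of degree exactly $n-2$ with positive coefficients.

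The main subtlety will be the bookkeeping for the equality cases $m=0,1$, since identifying ${\bf C}(\td_{2m}(X))$ with the relevant multiples of $\int\td^{1/2}(X)$ requires carefully tracing constants through the proof of Theorem~\ref{thm td>0}; once these are settled, the positivity of the remaining coefficients is immediate.
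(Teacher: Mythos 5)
Your proposal is correct and follows essentially the same route as the paper: reduce to $\alpha=\sigma+\overline{\sigma}$ via Fujiki's theorem, invoke Theorem~\ref{thm td>0} for strict positivity of the coefficients with $m\geq 2$, and check that the $m=0,1$ coefficients vanish. The only cosmetic difference is at $m=1$, where the paper computes $b_{n-1}=0$ directly from the definition of $\lambda_\sigma$ (since $\td_2=\tfrac{1}{12}c_2$), whereas you observe that the sum in the proof of Theorem~\ref{thm td>0} collapses to its $i=0$ term; both are valid one-line verifications.
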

\begin{proof}
By Theorem~\ref{fujiki result},
the coefficient of $\lambda(\alpha)^{n-m}$ in $P_{{\lambda(\alpha)}}$ is just 
$$
b_{n-m}=\frac{1}{\lambda_{\sigma}^{n-m}}\int \td_{2m} \exp(\sigma+\sigmabar) - \binom{2n-m+1}{m}\int\td^{1/2}(X).
$$

If $m>1$, then $b_{n-m}>0$ by Theorem~\ref{thm td>0}.
If $m=0$, then $b_{n}=0$ by Equality~\eqref{td1/2=int}.
If $m=1$, then by the definition of $\lambda_\sigma,$
\begin{align*}
b_{n-1}{}&=\frac{1}{\lambda_{\sigma}^{n-1}}\int \td_{2} \exp(\sigma+\sigmabar) - 2n\int\td^{1/2}(X)\\
{}&=\frac{1}{\lambda_{\sigma}^{n-1}}\int \frac{1}{12}c_2(X) \exp(\sigma+\sigmabar) - 2n\frac{1}{\lambda_{\sigma}^n}\int\exp(\sigma+\sigmabar)=0.
\end{align*}
Hence $P_{\lambda(\alpha)}$ is a polynomial in terms of $\lambda(\alpha)$ of degree $n-2$ with positive coefficients.
\end{proof}

\begin{proof}[Proof of Theorem~\ref{main thm1 RR>0}]
This follows from Proposition~\ref{prop lambda=q} and Corollary~\ref{cor P>0}.
\end{proof}

\subsection{Kawamata's effective non-vanishing conjecture and Riess's question}
Recall that a special version of Kawamata's effective non-vanishing conjecture predicts that, if $L$ is a nef and big line bundle on a projective manifold $X$ with $c_1(X)=0$, then $h^{0}(X, L)>0$. In \cite{CJmathz} we studied this conjecture and proposed a stronger version for projective hyperk\"{a}hler manifolds (\cite[Conjecture 3.6]{CJmathz}), which is actually equivalent to Theorem~\ref{main thm1 RR>0} for projective hyperk\"{a}hler manifolds. So by Theorem~\ref{main thm1 RR>0}, we get the following corollary.

\begin{cor}[{\cite[Conjecture 3.6]{CJmathz}}]\label{cj conj}
Let $X$ be a projective hyperk\"{a}hler manifold of dimension $2n$ and $L$ a nef and big line bundle on $X$. Then
\begin{enumerate}
\item $h^{0}(X,L)\geq n+2$;
\item $\int\td_{2n-2i}(X)\cdot L^{2i}> 0$ for all $0\leq i\leq n$.
\end{enumerate}
\end{cor}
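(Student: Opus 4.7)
The plan is to reduce both statements to Theorem~\ref{main thm1 RR>0} using Kawamata--Viehweg vanishing on one side and Fujiki's theorem on the other.

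First I would handle (1). Since $X$ is a hyperk\"{a}hler manifold, $K_X$ is trivial, so the Kawamata--Viehweg vanishing theorem applies to the nef and big line bundle $L$ and yields $H^i(X,L)=0$ for $i>0$; hence $h^0(X,L)=\chi(X,L)$. By Huybrechts' formula recalled in the introduction, $\chi(X,L)=\RR_X(q_X(c_1(L)))$. By Lemma~\ref{lemma qL>0}(1) we have $q_X(c_1(L))>0$, and because $q_X$ is a \emph{primitive integral} quadratic form on $H^2(X,\ZZ)$ (see Section~\ref{sec bbf form}), $q_X(c_1(L))$ is in fact a positive integer, so $q_X(c_1(L))\geq 1$. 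It is a standard fact that $\chi(\OO_X)=n+1$ for a hyperk\"{a}hler manifold of dimension $2n$ (since $h^{0,2k}(X)=1$ and $h^{0,2k+1}(X)=0$), which means $\RR_X(0)=n+1$. By Theorem~\ref{main thm1 RR>0} all coefficients of $\RR_X$ are strictly positive, hence $\RR_X(q)>n+1$ for every $q>0$; together with the fact that $\chi(X,L)$ is an integer, this forces $\chi(X,L)\geq n+2$.

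Next I would prove (2). The key identity is obtained by applying Fujiki's theorem (Theorem~\ref{fujiki result}) to the class $\alpha=\td_{2n-2i}(X)$. Since rational Chern classes (and therefore rational Todd classes) of a hyperk\"{a}hler manifold are topological invariants (see the discussion in Section~\ref{sec todd}), $\td_{2n-2i}(X)$ is of type $(2n-2i,2n-2i)$ on every small deformation, so Theorem~\ref{fujiki result} is applicable and gives
\[
\int \td_{2n-2i}(X)\cdot c_1(L)^{2i}={\bf C}(\td_{2n-2i}(X))\cdot q_X(c_1(L))^{i}=a_{2i}\cdot q_X(c_1(L))^{i}.
\]
The positivity of the coefficient $a_{2i}/(2i)!$ in $\RR_X$ (Theorem~\ref{main thm1 RR>0}) is equivalent to $a_{2i}>0$, and we already observed $q_X(c_1(L))>0$, so the right-hand side is strictly positive. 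This is exactly (2).

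There is no serious obstacle beyond Theorem~\ref{main thm1 RR>0}; both parts are essentially formal consequences of positivity of the Riemann--Roch polynomial. The only minor subtlety is making sure that the strict bound in (1) improves from $\chi(X,L)>n+1$ to $\chi(X,L)\geq n+2$, which relies on the integrality of $q_X$ on $H^2(X,\ZZ)$ and on the integrality of $\chi(X,L)$; both are standard.
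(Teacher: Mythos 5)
Your argument is correct and follows essentially the same route as the paper: Kawamata--Viehweg vanishing plus positivity of the Riemann--Roch coefficients (Theorem~\ref{main thm1 RR>0}) and $q_X(c_1(L))>0$ for part (1), and Fujiki's Theorem~\ref{fujiki result} combined with the positivity of $a_{2i}={\bf C}(\td_{2n-2i}(X))$ for part (2). The only cosmetic difference is that you invoke integrality of $q_X(c_1(L))$, which is true but unnecessary once you use the integrality of $h^0(X,L)$ to pass from $\chi(L)>n+1$ to $\chi(L)\geq n+2$, exactly as the paper does.
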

\begin{proof}

(2) directly follows from Theorem~\ref{thm td>0} and Theorem~\ref{fujiki result}.
For (1), by the
Kawamata--Viehweg
vanishing theorem (\cite{kawamata1}), Theorem~\ref{main thm1 RR>0}, and Proposition~\ref{lemma qL>0},
$$
h^0(X, L)=\chi(L)>\chi(\OO_X)=n+1.
$$
Here recall that the constant term of $\RR_X$ is just $\chi(\OO_X)$.
\end{proof}

As a related topic, Riess \cite{Riess} studied the base loci of linear systems of line bundles on hyperk\"{a}hler manifolds and naturally raised up the question whether the Riemann--Roch polynomial $RR_X(q)|_{q>0}$ is strictly monotonic.
Theorem~\ref{main thm1 RR>0} answers her question affirmatively.

\begin{cor}[Riess's question]\label{riess conj}
Let $X$ be a hyperk\"{a}hler manifold. Then the Riemann--Roch polynomial $RR_X(q)$ is strictly monotonic for $q>0$. 
\end{cor}
\subsection{An upper bound of $\int \td^{1/2}(X)$}
As an application of Theorem~\ref{thm td>0}, we can give an upper bound for the value $\int \td^{1/2}(X)$. 
\begin{cor}\label{upper td1/2}
Let $X$ be a hyperk\"{a}hler manifold of dimension $2n>2$. 
Then
$
\int\td^{1/2}(X)< 1.
$
Equivalently, let $g$ be a hyperk\"{a}hler metric on $X$ compatible with the hyperk\"{a}hler structure on $X$,
then $$
||R||^{2n}<(192\pi^2n)^n(\vol\, X)^{n-1},
$$
where $||R||$ is the $\text{\rm L}_2$-norm of the curvature tensor of $g$.
\end{cor}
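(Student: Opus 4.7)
The plan is to specialize Theorem~\ref{thm td>0} to the top degree $m=n$. Since $\td_{2n}(X)$ already sits in cohomological degree $4n$, only the constant term of $\exp(\sigma+\sigmabar)$ survives the integration, so
$$\int\td_{2n}(X)\exp(\sigma+\sigmabar)=\int\td_{2n}(X)=\chi(X,\OO_X)=n+1,$$
where the last equality is the standard Hirzebruch--Riemann--Roch computation for a hyperk\"{a}hler manifold of dimension $2n$ (using $h^{2i,0}=1$ and $h^{2i+1,0}=0$, which also matches $\RR_X(0)=n+1$ in Example~\ref{ex RR}). Feeding this into Theorem~\ref{thm td>0} with $m=n$ gives
$$n+1=\int\td_{2n}(X)\exp(\sigma+\sigmabar)\geq \binom{n+1}{n}\lambda_\sigma^{0}\int\td^{1/2}(X)=(n+1)\int\td^{1/2}(X),$$
so $\int\td^{1/2}(X)\leq 1$. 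The hypothesis $2n>2$ forces $n\geq 2$, so both conditions $m>1$ and $n>1$ in the strictness clause of Theorem~\ref{thm td>0} are satisfied, and the inequality is strict: $\int\td^{1/2}(X)<1$.

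For the curvature-norm version of the statement, I would invoke the integral formula of Hitchin--Sawon \cite{hitchinsawon},
$$\int\td^{1/2}(X)=\frac{\|R\|^{2n}}{(192\pi^2 n)^n(\vol X)^{n-1}},$$
proved for any hyperk\"{a}hler metric $g$ compatible with the hyperk\"{a}hler structure; this is exactly the identity by which they established the companion lower bound $\int\td^{1/2}(X)>0$. Substituting our upper bound $\int\td^{1/2}(X)<1$ yields the stated curvature inequality, so the two formulations are genuinely equivalent and no separate argument is required for the second.

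The only point that needs a moment's attention is checking that the strict-inequality regime of Theorem~\ref{thm td>0} really applies at $m=n$, and this is immediate from $n\geq 2$. All remaining steps are either degree-counting or citations of known identities, so there is no genuine obstacle beyond what is already packaged in Theorem~\ref{thm td>0}.
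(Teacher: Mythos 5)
Your proposal is correct and follows essentially the same route as the paper: specialize Theorem~\ref{thm td>0} to $m=n$, note that $\binom{n+1}{n}\lambda_\sigma^{0}=n+1$ and $\int\td_{2n}\exp(\sigma+\sigmabar)=\int\td_{2n}=\chi(\OO_X)=n+1$, use strictness from $n>1$, and then cite the Hitchin--Sawon curvature formula for the second statement. No gaps.
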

\begin{proof}
In Theorem~\ref{thm td>0}, taking $m=n$, we get
$$
\int\td^{1/2}(X)< \frac{1}{n+1} \int \td_{2n} = \frac{1}{n+1} \chi(\OO_X)=1.
$$
The second statement follows directly from \cite[Theorem 5]{hitchinsawon}.
\end{proof}

\begin{example}
\begin{enumerate}
\item For a K3 surface $S$, $\int\td^{1/2}(S)= c_2(S)/24=1.$ 
\item If $X$ is the Hilbert scheme of $n$ points on a K3 surface, then $\int\td^{1/2}(X)=\frac{(n+3)^n}{4^nn!}$ by Sawon \cite[Proposition 19]{Sawon-phd}.
\item If $X$ is a generalized Kummer variety of dimension $2n$, then $\int\td^{1/2}(X)=\frac{(n+1)^{n+1}}{4^nn!}$ by Sawon \cite[Proposition 21]{Sawon-phd}.
\item As all Chern numbers of O'Grady's examples are known due to \cite{mrs} ($6$-dimensional case) and \cite{ortiz} ($10$-dimensional case), we can compute $\int\td^{1/2}(X)$ for these examples. In fact, Belmans informed the author that he computed that $\int\td^{1/2}(X)=\frac{2}{3}$ or $\frac{4}{15}$ for O'Grady's $6$-dimensional example and $10$-dimensional example respectively, which coincides with the value for a generalized Kummer variety of dimension $6$, or the Hilbert scheme of $5$ points on a K3 surface respectively. Then we realized that all the above numbers can be obtained directly by Example~\ref{ex RR} and the following Lemma~\ref{lem td same}.
\end{enumerate}
\end{example}
\begin{lem}\label{lem td same}
Let $X$ be a hyperk\"{a}hler manifold of dimension $2n$ and suppose that the first two leading terms of $\RR_X(q)$ are $Aq^n$ and $Bq^{n-1}$, then $\int\td^{1/2}(X)=\frac{B^n}{(2n)^nA^{n-1}}$. In particular, if $X$ and $Y$ are two hyperk\"{a}hler manifolds with the same Riemann--Roch polynomial $\RR_X(q)=\RR_Y(q)$, then
$\int\td^{1/2}(X)=\int\td^{1/2}(Y)$.
\end{lem}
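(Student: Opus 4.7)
\medskip

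\noindent\textbf{Proof plan.} The strategy is to read off the two leading coefficients $A$ and $B$ of $\RR_X(q)$ from the Hirzebruch--Riemann--Roch formula, then to use Corollary~\ref{cor cover nieper} to solve for $\int \td^{1/2}(X)$, and finally to express the answer in terms of $A, B$.

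First I would extract $A$ and $B$ explicitly. Recall $a_{2i}=\mathbf C(\td_{2n-2i}(X))$ and $\RR_X(q)=\sum_i \frac{a_{2i}}{(2i)!}q^{i}$. From the Fujiki formula $\int \alpha^{2n}=c_X\,q_X(\alpha)^n$ one gets $\mathbf C(1)=c_X$, hence $A=\frac{a_{2n}}{(2n)!}=\frac{c_X}{(2n)!}$. Using $\td_{2}=\frac{1}{12}c_2(X)$ and Theorem~\ref{fujiki result}, one obtains $B=\frac{a_{2n-2}}{(2n-2)!}=\frac{\mathbf C(c_2(X))}{12\,(2n-2)!}$.

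Next, by Corollary~\ref{cor cover nieper}, for any $\alpha\in H^2(X,\mathbb R)$,
\begin{equation*}
\int\td^{1/2}(X)\exp(\alpha)=(1+\lambda(\alpha))^{n}\int\td^{1/2}(X).
\end{equation*}
By Proposition~\ref{prop lambda=q} we may write $\lambda(\alpha)=c\cdot q_X(\alpha)$ with $c=\frac{12\,c_X}{(2n-1)\mathbf C(c_2(X))}$. Comparing the coefficients of the top-degree term $q_X(\alpha)^n$ on both sides (the left-hand side has top term $\frac{1}{(2n)!}\int\alpha^{2n}=\frac{c_X}{(2n)!}q_X(\alpha)^n$ by Fujiki, and the right-hand side has top term $c^n q_X(\alpha)^n\int\td^{1/2}(X)$) gives
\begin{equation*}
\int\td^{1/2}(X)=\frac{c_X}{(2n)!\, c^{n}}=\frac{(2n-1)^{n}\mathbf C(c_2(X))^{n}}{12^{n}\,c_X^{\,n-1}(2n)!}.
\end{equation*}

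Finally I would substitute $c_X=(2n)!\,A$ and $\mathbf C(c_2(X))=12\,(2n-2)!\,B$ and use the elementary identity $(2n-1)(2n-2)!=\frac{(2n)!}{2n}$ to collapse the factorials, obtaining
\begin{equation*}
\int\td^{1/2}(X)=\frac{(2n-1)^n((2n-2)!)^n}{((2n)!)^n}\cdot\frac{B^n}{A^{n-1}}=\frac{B^{n}}{(2n)^{n}A^{\,n-1}}.
\end{equation*}
The second assertion is then immediate, since it depends only on $A$ and $B$. The whole argument is essentially bookkeeping once one observes that comparing top-degree coefficients in Corollary~\ref{cor cover nieper} already determines $\int\td^{1/2}(X)$; no hard step is anticipated, the only mild care being the cross-translation between the normalizations $q_X$ (used for $\RR_X$) and $\lambda$ (used for the $\td^{1/2}$-identity).
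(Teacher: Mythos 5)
Your proof is correct and follows essentially the same route as the paper: both arguments reduce to the identity $\int\td^{1/2}(X)=\frac{c_X\,q_X(\alpha)^n}{(2n)!\,\lambda(\alpha)^n}$ (you obtain it by comparing top-degree coefficients in Corollary~\ref{cor cover nieper}, the paper via Equality~\eqref{td1/2=int}, which is the same statement) and then substitute $c_X=(2n)!A$ and $\lambda=\frac{2nA}{B}q_X$. The only difference is that you derive these two normalization facts directly from Hirzebruch--Riemann--Roch, Fujiki's theorem, and Proposition~\ref{prop lambda=q}, whereas the paper cites them from the proof of Lemma~3 in \cite{ortiz}; your bookkeeping checks out.
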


\begin{proof}
Write $\RR_X(q)=Aq^n+Bq^{n-1}+(\text{lower terms})$. 
Recall that by \cite[Proof of Lemma~3]{ortiz}, $c_X=(2n)!A$ and $\lambda(\sigma+\sigmabar)=\frac{2nA}{B}q_X(\sigma+\sigmabar)$, where $c_X$ is the Fujiki constant and $\lambda$ is in Definition~\ref{def lambda}. Then by \eqref{td1/2=int},
\begin{align*}
\int \td^{1/2}(X)={}&\frac{1}{\lambda(\sigma+\sigmabar)^n(n!)^2}\int (\sigma\sigmabar)^{n}=\frac{1}{\lambda(\sigma+\sigmabar)^n(2n)!}\int (\sigma+\sigmabar)^{2n}\\
={}&\frac{c_Xq_X(\sigma+\sigmabar)^n}{\lambda(\sigma+\sigmabar)^n(2n)!}=\frac{B^n}{(2n)^nA^{n-1}}.
\end{align*}
\end{proof}
The examples suggest that $\int\td^{1/2}(X)$ might get very small as $n$ getting large, so it is natural to ask whether there is a better upper bound for $\int\td^{1/2}(X)$ of exponential order $c<0$ in terms of $\dim X$.

Recall that for a hyperk\"{a}hler manifold $X$ of dimension $2n$, its {\it Chern numbers} are given by integrals of the form $\int c_{2k_1}c_{2k_2}\dots c_{2k_m}$ for non-negative integers $k_1,\dots, k_m$ satisfying $\sum_{i=1}^mk_i=n$.
As observed by Sawon \cite{Sawon-phd} and Nieper-Wi{\ss}kirchen \cite[Appendix B]{nieper-phd} (see also \cite{mrs} and \cite{ortiz}), all known Chern numbers of hyperk\"{a}hler manifolds are positive. So it is natural to propose the following conjecture, which is a question by Nieper-Wi{\ss}kirchen \cite[Appendix B]{nieper-phd}.
\begin{conj}\label{conj chern>0}
Let $X$ be a hyperk\"{a}hler manifold of dimension $2n$. 
Then all Chern numbers $\int c_{2k_1}c_{2k_2}\dots c_{2k_m}$ for non-negative integers $k_1,\dots, k_m$ satisfying $\sum_{i=1}^mk_i=n$ are positive integers.
\end{conj}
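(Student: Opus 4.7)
The plan is to translate Conjecture~\ref{conj chern>0} into Rozansky--Witten language and try to adapt the Hodge--Riemann strategy of Theorem~\ref{thm td>0}. By Proposition~\ref{prop RW classes}(3), $\ch_{2k}(X)=-\frac{1}{(2k)!}\RW_\sigma(\bw_{2k})$, and since the odd Chern classes vanish, Newton's identities express each $c_{2k}(X)$ as a rational polynomial in the $\ch_{2j}(X)$ with $j\le k$. As $\RW_\sigma$ is an algebra homomorphism (Proposition~\ref{prop RW algebra}), every product $c_{2k_1}(X)\cdots c_{2k_m}(X)$ is a $\mathbb{Q}$-linear combination of Rozansky--Witten classes $\RW_\sigma(\bw_{2j_1}\cdots \bw_{2j_s})$ with $\sum j_i=n$, and each Chern number is correspondingly a $\mathbb{Q}$-linear combination of Rozansky--Witten integrals. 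Integrality is automatic from the general theory of characteristic classes, so only positivity remains to be addressed.

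I would try to produce a Lefschetz-type decomposition of each monomial $\RW_\sigma(\bw_{2j_1}\cdots \bw_{2j_s})$ with respect to $L_\sigma$, in direct analogy with Theorem~\ref{td=sum tp}. Corollary~\ref{lambda td} together with Theorem~\ref{RW diff} provide the needed tool: one would compute $\partial(\bw_{2j_1}\cdots \bw_{2j_s})$ explicitly by summing over the ways to glue two univalent vertices --- either within a single wheel, producing a smaller diagram multiplied by $\Theta$, or across two distinct wheels, fusing them --- and then iterate until the resulting classes are $\sigma$-primitive. Recollecting the expansion should yield explicit primitive decompositions whose self-pairings with the appropriate power of $\sigma\sigmabar$ are non-negative by the Hodge--Riemann bilinear relations (Corollary~\ref{cor pp=0}(2)). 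If the coefficients appearing in these decompositions turn out to be non-negative, summing the primitive contributions across the monomials arising in $c_{2k_1}(X)\cdots c_{2k_m}(X)$ would establish the desired positivity, with a lower bound proportional to $\int\td^{1/2}(X)\cdot \lambda_\sigma^{n-\sum k_i}$ in the spirit of Theorem~\ref{thm td>0}.

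The hard part is controlling the signs of these decomposition coefficients. The positivity obtained in Theorem~\ref{td=sum tp} was engineered by the exponential generating function $\Omega=\exp(\sum b_{2k}\bw_{2k})$: the modified Bernoulli numbers $b_{2k}$ conspire with the wheeling identity $\partial\Omega=\tfrac{\Theta}{48}\Omega$ and the combinatorial identity of Lemma~\ref{comb identity1} to yield uniformly positive coefficients. For an arbitrary monomial of wheels there is no such exponential structure available, and one should expect mixed signs generically, in which case the Hodge--Riemann positivity of primitive self-pairings may be overwhelmed by negative contributions. Moreover, even the base case $m=1$, where $\int c_{2n}(X)=\chi_{\mathrm{top}}(X)$, is only known to be positive on the examples of Beauville and O'Grady by direct computation; a conceptual proof via the LLV algebra, or some structural positivity phenomenon for Rozansky--Witten integrals of products of wheels in $\hcB$, appears to be the essential missing ingredient, and I would expect Conjecture~\ref{conj chern>0} to require genuinely new input beyond the Lefschetz--Hodge--Riemann package employed in this paper.
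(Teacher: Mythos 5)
This statement is Conjecture~\ref{conj chern>0}, not a theorem: the paper offers no proof of it, explicitly attributes it to Nieper-Wi{\ss}kirchen as an open question, and remarks that even the special case $\int c_{2n}(X)=\chi_{\mathrm{top}}(X)>0$ is unknown in dimension~$4$. Your proposal is therefore not in competition with any argument in the paper, and to your credit you do not actually claim to prove the conjecture --- you end by conceding that the Lefschetz--Hodge--Riemann package is insufficient. That conclusion is correct and consistent with the paper's own assessment.

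Your diagnosis of the obstruction is accurate and worth recording: the positivity in Theorem~\ref{td=sum tp} and Theorem~\ref{thm td>0} is not a generic feature of Rozansky--Witten integrals but is engineered by the exponential structure of the wheeling element $\Omega=\exp\bigl(\sum_k b_{2k}\bw_{2k}\bigr)$, whose interaction with $\partial$ via $\partial\Omega=\frac{\Theta}{48}\Omega$ is exactly what produces the uniformly positive coefficients in the primitive decomposition. An individual monomial $\bw_{2j_1}\cdots\bw_{2j_s}$ has no analogous closed behavior under $\partial$, so the iterated gluing you describe will generically produce mixed-sign coefficients, and the Hodge--Riemann positivity of the primitive self-pairings cannot then be assembled into positivity of the Chern number. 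One small caution: your reduction via Newton's identities expresses $c_{2k}$ as a polynomial in the $\ch_{2j}$ with coefficients of both signs, which already destroys any hope of termwise positivity before the diagrammatic analysis begins. In short, there is no proof here, but there is none in the paper either; the statement remains open.
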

If this conjecture is true, then it reflects very special geometry of hyperk\"{a}hler manifolds. For example, it predicts that the topological Euler characteristic of any hyperk\"{a}hler manifold is positive as a special case, which is unfortunately unknown even in dimension $4$. One can expect that the methods in this paper might give some partial solutions to this conjecture.

\appendix

\section{Some combinatorial identities}
In this appendix, we prove two combinatorial identities.
\begin{lem}\label{comb identity1}Given non-negative integers $n,k,j$ satisfying $n/2\geq k\geq j$, we have
$$\sum_{i=0}^{k-j}
\frac{(-1)^{i}(n-2k+2i+1)(n-2k+i)!}{ i!(k-i-j)!(n-k+i-j+1)!}=
\begin{cases}
0 &\text{if } k>j;\\
1 &\text{if } k=j.
\end{cases}$$
\end{lem}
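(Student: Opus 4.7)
The plan is to substitute $m := k-j \geq 0$ and $a := n-2k \geq 0$, which recasts the identity as
$$S_m \;:=\; \sum_{i=0}^{m}\frac{(-1)^{i}(a+2i+1)(a+i)!}{i!\,(m-i)!\,(a+m+i+1)!} \;=\; \delta_{m,0}.$$
The case $m = 0$ is immediate, as the single surviving term is $\frac{(a+1)(a)!}{(a+1)!} = 1$. So the content of the lemma lies in showing $S_m = 0$ for $m \geq 1$, on which I focus below.

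The key algebraic move I would make is the splitting
$$a+2i+1 \;=\; (a+m+i+1) - (m-i),$$
which writes $S_m = U_m - V_m$ with
$$U_m = \sum_{i=0}^{m}\frac{(-1)^{i}(a+i)!}{i!\,(m-i)!\,(a+m+i)!}, \qquad V_m = \sum_{i=0}^{m-1}\frac{(-1)^{i}(a+i)!}{i!\,(m-i-1)!\,(a+m+i+1)!}.$$
In $U_m$ the factor $(a+m+i+1)$ has absorbed one factor of the denominator $(a+m+i+1)!$, and in $V_m$ the factor $(m-i)$ has absorbed one factor of $(m-i)!$, so both pieces have a considerably simpler hypergeometric shape than the original.

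I would then evaluate $U_m$ and $V_m$ via the Beta-function identity
$$\frac{(a+i)!\,(p-1)!}{(a+i+p)!} \;=\; \int_{0}^{1} t^{a+i}(1-t)^{p-1}\,dt,$$
applied with $p = m$ inside $U_m$ and $p = m+1$ inside $V_m$. Swapping sum and integral, the remaining sum over $i$ collapses via the binomial theorem $\sum_{i}\binom{\ell}{i}(-t)^i = (1-t)^{\ell}$ (with $\ell = m$ for $U_m$ and $\ell = m-1$ for $V_m$), and in both cases the result is
$$\frac{1}{m!\,(m-1)!}\int_{0}^{1} t^{a}(1-t)^{2m-1}\,dt \;=\; \frac{a!\,(2m-1)!}{m!\,(m-1)!\,(a+2m)!}.$$
Hence $U_m = V_m$, which gives $S_m = 0$, as required. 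The only non-obvious step is spotting the splitting of $a+2i+1$; once that is in place, everything reduces to routine Beta-integral and binomial manipulation, so I do not anticipate any real obstacle.
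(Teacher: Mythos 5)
Your proof is correct. The substitution $m=k-j$, $a=n-2k\geq 0$ is valid, the splitting $a+2i+1=(a+m+i+1)-(m-i)$ does produce the two sums $U_m$ and $V_m$ exactly as you write them (the $i=m$ term of the second piece vanishing), and both Beta-integral evaluations check out: each of $U_m$ and $V_m$ equals $\frac{1}{m!(m-1)!}\int_0^1 t^a(1-t)^{2m-1}\,dt$, so $S_m=0$ for $m\geq 1$. The paper's proof follows the same broad strategy --- split the linear factor $n-2k+2i+1$ into two pieces, each of which is absorbed into the combinatorial weights, and then show the two resulting simpler sums coincide --- but differs in both ingredients. The paper first rewrites the summand in terms of binomial coefficients as $(-1)^i(n-2k+2i+1)\binom{n-2k+i}{i}\binom{n-2j+1}{k-i-j}$, splits the factor as $(n-2k+i+1)+i$ (rather than your $(a+m+i+1)-(m-i)$) so as to extract a common factor $(n-2k+1)$, and then evaluates each of the two sums as the coefficient of $x^{k-j}$ (resp.\ $x^{k-j-1}$) in $(1+x)^{-(n-2k+2)}(1+x)^{n-2j+1}=(1+x)^{2k-2j-1}$, i.e.\ by a Vandermonde-type convolution; both equal $\binom{2k-2j-1}{k-j}$ and cancel. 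Your Beta-integral argument is an integral-representation avatar of the same convolution principle; it has the mild advantage of collapsing the inner sum by the plain binomial theorem rather than by coefficient extraction from a negative binomial series, at the cost of introducing an analytic tool into a purely combinatorial statement. Either proof is complete and elementary.
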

\begin{proof}
The case when $k=j$ is trivial. Suppose that $k>j$.
The desired equality is equivalent to 
$$
\sum_{i=0}^{k-j}(-1)^{i}(n-2k+2i+1)\binom{n-2k+i}{i}\binom{n-2j+1}{k-i-j}=0.
$$
Note that 
\begin{align*}
{}&\sum_{i=0}^{k-j}(-1)^{i}(n-2k+2i+1)\binom{n-2k+i}{i}\binom{n-2j+1}{k-i-j}\\
={}&\sum_{i=0}^{k-j}(-1)^{i}((n-2k+i+1)+i)\binom{n-2k+i}{i}\binom{n-2j+1}{k-i-j}\\
={}&\sum_{i=0}^{k-j}(-1)^{i}(n-2k+1)\binom{n-2k+i+1}{i}\binom{n-2j+1}{k-i-j}\\
{}&+\sum_{i=1}^{k-j}(-1)^{i}(n-2k+1)\binom{n-2k+i}{i-1}\binom{n-2j+1}{k-i-j}.
\end{align*}
To conclude the proof, we claim that
\begin{align*}
{}&\sum_{i=0}^{k-j}(-1)^{i}\binom{n-2k+i+1}{i}\binom{n-2j+1}{k-i-j}\\
={}&\sum_{i=1}^{k-j}(-1)^{i-1}\binom{n-2k+i}{i-1}\binom{n-2j+1}{k-i-j}\\
={}&\binom{2k-2j-1}{k-j}.
\end{align*}
In fact, the first item is the coefficient of $x^{k-j}$ in the generating function
$$
(1+x)^{-(n-2k+2)}\cdot (1+x)^{n-2j+1}=(1+x)^{2k-2j-1},
$$
so it equals to $\binom{2k-2j-1}{k-j}$; meanwhile, the second item is the coefficient of $x^{k-j-1}$ in the generating function
$$
(1+x)^{-(n-2k+2)}\cdot (1+x)^{n-2j+1}=(1+x)^{2k-2j-1},
$$
so it equals to $\binom{2k-2j-1}{k-j-1}=\binom{2k-2j-1}{k-j}$.
\end{proof}

\begin{lem}\label{comb identity2}Given non-negative integers $n,m$ satisfying $n\geq m$, we have
$$\sum_{k=0}^m \frac{(n-k)!(n-m+k)!}{k!(m-k)!}=(n-m)!^2\binom{2n-m+1}{m}.$$
Furthermore, if $i$ is an integer satisfying $m\geq 2i$, then
$$\sum_{k=i}^{m-i} \frac{(n-k-i)!(n-m+k-i)!}{(k-i)!(m-k-i)!}=(n-m)!^2\binom{2n-2i-m+1}{m-2i}.$$
\end{lem}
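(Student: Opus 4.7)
\textbf{Proof proposal for Lemma~\ref{comb identity2}.}

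The plan is to recognize the sum in the first identity as a standard negative-binomial (Vandermonde-type) convolution, and then reduce the second identity to the first by a linear reindexing.

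First I would rewrite each summand in the first identity using binomial coefficients. Since $(n-m)!$ appears naturally in the denominators via $(m-k)!(n-m)! = (n-k)!/\binom{n-k}{m-k}$ and similarly for the other factor, we get
\begin{equation*}
\frac{(n-k)!(n-m+k)!}{k!(m-k)!} = (n-m)!^{2}\binom{n-k}{m-k}\binom{n-m+k}{k}.
\end{equation*}
Thus the first identity is equivalent to
\begin{equation*}
\sum_{k=0}^{m}\binom{n-m+k}{k}\binom{n-k}{m-k} = \binom{2n-m+1}{m}.
\end{equation*}
This is exactly the classical convolution identity $\sum_{k=0}^{m}\binom{a+k}{k}\binom{b+m-k}{m-k} = \binom{a+b+m+1}{m}$ applied with $a=b=n-m$; one quick way to see it is to compare the coefficient of $x^{m}$ on both sides of $(1-x)^{-(n-m+1)}(1-x)^{-(n-m+1)}=(1-x)^{-(2n-2m+2)}$, which directly gives the claim.

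For the second identity, I would substitute $k'=k-i$ (so $k'$ ranges over $0,\dots,m-2i$); the summand becomes
\begin{equation*}
\frac{(n-2i-k')!\,(n-m+k')!}{k'!\,(m-2i-k')!}.
\end{equation*}
Setting $n' := n-2i$ and $m' := m-2i$, one checks $n'-m' = n-m$ and $2n'-m'+1 = 2n-2i-m+1$. So the first identity, already established with parameters $(n',m')$, yields
\begin{equation*}
\sum_{k'=0}^{m-2i}\frac{(n-2i-k')!(n-m+k')!}{k'!(m-2i-k')!} = (n-m)!^{2}\binom{2n-2i-m+1}{m-2i},
\end{equation*}
which is precisely the second identity.

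I do not anticipate a genuine obstacle here; both parts are routine combinatorial manipulations and the second reduces cleanly to the first by reindexing. The only mild care needed is to keep track of the range of summation under the substitution $k'=k-i$ and to verify that the Vandermonde parameters $a=b=n-m$ are non-negative (which holds by hypothesis $n\ge m$), so that the coefficient-extraction argument is valid.
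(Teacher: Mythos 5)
Your proof is correct and follows essentially the same route as the paper: rewrite the summand as $(n-m)!^{2}\binom{n-k}{m-k}\binom{n-m+k}{k}$, extract the coefficient of $x^{m}$ from $(1-x)^{-(n-m+1)}\cdot(1-x)^{-(n-m+1)}=(1-x)^{-(2n-2m+2)}$, and deduce the second identity from the first by the shift $(n,m)\mapsto(n-2i,m-2i)$.
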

\begin{proof}
Consider 
$$\frac{1}{(n-m)!^2}\sum_{k=0}^m \frac{(n-k)!(n-m+k)!}{k!(m-k)!}=\sum_{k=0}^m \binom{n-k}{m-k}\binom{n-m+k}{k}.$$
This is exactly the coefficient of $x^{m}$ in the generating function
$$
(1-x)^{-(n-m+1)}\cdot (1-x)^{-(n-m+1)}=(1-x)^{-(2n-2m+2)}, 
$$
which is just $\binom{2n-m+1}{m}.$
The second equality follows from the first one by considering $n-2i$ and $m-2i$, and changing the range of $k$ to $[0, m-2i].$
\end{proof}

\section*{Acknowledgments}
The author is grateful to Yalong Cao for inspiration and fruitful discussions.
The author would like to thank Thorsten Beckmann, Pieter Belmans, \'{A}ngel Ortiz, and Shilin Yu for discussions and comments during the preparation of this paper. 
The author would like to thank the referee for useful comments and suggestions.
The author was supported by National Key Research and Development Program of China (Grant No.~2020YFA0713200).

\end{document}